\documentclass[12pt]{amsart} 
\setcounter{tocdepth}{2}
\pagestyle{plain}
\usepackage{amsfonts,amsmath,amsthm,amsfonts,amscd,amssymb,amsmath,latexsym,multicol,euscript}
\usepackage[normalem]{ulem}
\usepackage{epsfig,graphicx,flafter,caption}
\usepackage{enumerate}
\usepackage[active]{srcltx}
\usepackage{hyperref,url}
\usepackage{color}
\usepackage{asymptote,asyfig}
\usepackage{media9}
\usepackage[small,nohug,UglyObsolete]{diagrams}
\diagramstyle[labelstyle=\scriptstyle]
\diagramstyle[noPostScript]
\usepackage{algorithm}
\usepackage[noend]{algpseudocode}
\usepackage{xr}
\usepackage{polynom}
\usepackage{csquotes}
\usepackage{epigraph}

\makeatletter

\def\jobis#1{FF\fi
  \def\preedicate{#1}%
  \edef\preedicate{\expandafter\strip@prefix\meaning\preedicate}%
  \edef\job{\jobname}%
  \ifx\job\preedicate
}

\makeatother

\if\jobis{proposal}%
 \def\try{subsection}%
\else
  \def\try{section}%
\fi

 
 
 

\theoremstyle{plain}
\newtheorem{theorem}{Theorem}[\try]
\newtheorem{corollary}[theorem]{Corollary}
\newtheorem{lemma}[theorem]{Lemma}

\newtheorem{proposition}[theorem]{Proposition}
\newtheorem{definition-lemma}[theorem]{Definition-Lemma}
\newtheorem{definition-proposition}[theorem]{Definition-Proposition}
\newtheorem{definition-theorem}[theorem]{Definition-Theorem}

\newtheorem{definition}[theorem]{Definition}




\def\lfomitlist#1.#2.#3.#4.{{#1}_0,{#1}_1 #2 \dots #2\hat{{#1}_{#4}} #2\dots #2 {#1}_{#3}}


\def\alist#1.#2.#3.{{#1}_1 #2 {#1}_2 #2\dots #2 {#1}_{#3}}
\def\zlist#1.#2.#3.{#1_0 #2 #1_1 #2\dots #2 #1_{#3}}
\def\ltomitlist#1.#2.#3.{{#1}_0,{#1}_1 #2 \dots #2\hat {{#1}_i} #2\dots #2 {#1}_{#3}}
\def\lomitlist#1.#2.#3.{{#1}_0 #2 {#1}_1 #2 \dots #2 \hat {{#1}_i} #2 \dots #2 {#1}_{#3}}
\def\lmap#1.#2.#3.{#1 \overset{#2}{\longrightarrow} #3}
\def\mes#1.#2.#3.{#1 \longrightarrow #2 \longrightarrow #3}
\def\ses#1.#2.#3.{0\longrightarrow #1 \longrightarrow #2 \longrightarrow #3 \longrightarrow 0}
\def\les#1.#2.#3.{0\longrightarrow #1 \longrightarrow #2 \longrightarrow #3}
\def\res#1.#2.#3.{#1 \longrightarrow #2 \longrightarrow #3\longrightarrow 0}
\def\Hi#1.#2.#3.{\text {Hilb}^{#1}_{#2}(#3)}
\def\ten#1.#2.#3.{#1\underset {#2}{\otimes} #3}

\def\mderiv#1.#2.#3.{\frac {d^{#3} #1}{d #2^{#3}}}
\def\mfderiv#1.#2.#3.{\frac {\partial^{#3} #1}{\partial #2}}
\def\ggr#1.#2.#3.{\mathbb{G}_{#1}(#2,#3)}


\def\llist#1.#2.{{#1}_1,{#1}_2,\dots,{#1}_{#2}}
\def\ulist#1.#2.{{#1}^1,{#1}^2,\dots,{#1}^{#2}}
\def\lomitlist#1.#2.{{#1}_1,{#1}_2,\dots,\hat {{#1}_i}, \dots, {#1}_{#2}}
\def\lomitlistz#1.#2.{{#1}_0,{#1}_1,\dots,\hat {{#1}_i}, \dots, {#1}_{#2}}
\def\loc#1.#2.{\Cal O_{#1,#2}}
\def\fderiv#1.#2.{\frac {\partial #1}{\partial #2}}
\def\deriv#1.#2.{\frac {d #1}{d #2}}
\def\map#1.#2.{#1 \longrightarrow #2}
\def\rmap#1.#2.{#1 \dasharrow #2}
\def\emb#1.#2.{#1 \hookrightarrow #2}
\def\non#1.#2.{\text {Spec }#1[\epsilon]/(\epsilon)^{#2}}
\def\Hi#1.#2.{\text {Hilb}^{#1}(#2)}
\def\sym#1.#2.{\operatorname {Sym}^{#1}(#2)}
\def\Hb#1.#2.{\text {Hilb}_{#1}(#2)}
\def\Hm#1.#2.{\Hom_{#1}(#2)}
\def\prd#1.#2.{{#1}_1\cdot {#1}_2\cdots {#1}_{#2}}
\def\Bl #1.#2.{\operatorname {Bl}_{#1}#2}
\def\pl #1.#2.{#1^{\otimes #2}}
\def\mgn#1.#2.{\overline {M}_{#1,#2}}
\def\ialist#1.#2.{{#1}_1 #2 {#1}_2 #2 {#1}_3 #2\dots}
\def\pair#1.#2.{\langle #1, #2\rangle}
\def\gproj#1.#2.{\mathbb{P}_{#1}(#2)}
\def\gpr #1.#2.{\mathbb{P}^{#1}_{#2}}
\def\gaf #1.#2.{\mathbb{A}^{#1}_{#2}}
\def\vandermonde#1.#2.{\left|
\begin{matrix}
1 & 1 & 1 & \dots & 1\\
{#1}_1 & {#1}_2 & {#1}_3 & \dots & {#1}_{#2}\\
{#1}_1^2 & {#1}_2^2 & {#1}_3^2 & \dots & {#1}_{#2}^2\\
\vdots & \vdots & \vdots & \ddots & \vdots\\
{#1}_1^{#2-1} & {#1}_2^{#2-1} & {#1}_2^{#2-1} & \dots & {#1}_{#2}^{#2-1}\\
\end{matrix}
\right|
}
\def\vandermondet#1.#2.{\left|
\begin{matrix}
1 & {#1}_1   & {#1}_1^2 & \dots & {#1}_1^{#2-1}\\
1 & {#1}_2   & {#1}_2^2 & \dots & {#1}_2^{#2-1}\\
1 & {#1}_3   & {#1}_3^2 & \dots & {#1}_3^{#2-1}\\
\vdots & \vdots & \vdots & \ddots & \vdots\\
1 & {#1}_{#2}& {#1}_{#2}^2 & \dots & {#1}_{#2}^{#2-1}\\
\end{matrix}
\right|
}
\def\gr#1.#2.{\mathbb{G}(#1,#2)}
\def\bdd#1.#2.{{#1}_{\rfdown #2.}}


\def\ideal#1.{I_{#1}}
\def\ring#1.{\mathcal {O}_{#1}}
\def\fring#1.{\hat{\mathcal {O}}_{#1}}
\def\aring#1.{{\mathcal {O}}_{#1}^{\text{an}}}
\def\proj#1.{\mathbb {P}(#1)}
\def\pr #1.{\mathbb {P}^{#1}}
\def\dpr #1.{\hat{\mathbb {P}}^{#1}}
\def\af #1.{\mathbb{A}^{#1}}
\def\Hz #1.{\mathbb{F}_{#1}}
\def\Hbz #1.{\overline{\mathbb {F}}_{#1}}
\def\fb#1.{\underset {#1} {\times}}
\def\rest#1.{\underset {\ \ring #1.} \to \otimes}
\def\au#1.{\operatorname {Aut}\,(#1)}
\def\deg#1.{\operatorname {deg } (#1)}
\def\pic#1.{\operatorname {Pic}\,(#1)}
\def\pico#1.{\operatorname{Pic}^0(#1)}
\def\picg#1.{\operatorname {Pic}^G(#1)}
\def\n1#1.{\operatorname{N^1}(#1)}
\def\ner#1.{\operatorname{NS}(#1)}
\def\rdown#1.{\llcorner#1\lrcorner}
\def\rfdown#1.{\lfloor{#1}\rfloor}
\def\rup#1.{\ulcorner{#1}\urcorner}
\def\rfup#1.{\lceil{#1}\rceil}
\def\sship#1.{\langle{#1}\rangle}

\def\bp#1.{#1^{{}\leq 1}}
\def\rcup#1.{\lceil{#1}\rceil}
\def\cone#1.{\operatorname {NE}(#1)}
\def\mone#1.{\operatorname {NM}(#1)}
\def\none#1.{\operatorname {NF}(#1)}
\def\ccone#1.{\overline{\operatorname {NE}}(#1)}
\def\cmone#1.{\overline{\operatorname {NM}}(#1)}
\def\cnone#1.{\overline{\operatorname {NF}}(#1)}
\def\cbig#1.{\overline{\operatorname {B}(#1)}}
\def\coef#1.{\frac{(#1-1)}{#1}}
\def\vit#1.{D_{\langle #1 \rangle}}
\def\mm#1.{\overline {M}_{0,#1}}
\def\Hone#1.{H^1(#1,{\ring #1.})}
\def\ac#1.{\overline {\mathbb F}_{#1}}

\def\adj#1.{\frac {#1-1}{#1}}
\def\spn#1.{\overline{#1}}
\def\pek#1.#2.{\Cal P^{#1}(#2)}
\def\plk#1.#2.{\Cal P^{\leq #1}(#2)}
\def\ev#1.{\operatorname{ev_{#1}}}
\def\ilist#1.{{#1}_1,{#1}_2,\dotsc}
\def\bminv#1.{(\nu_1,s_1;\nu_2,s_2;\dots ;\nu_{#1},s_{#1};\nu_{r+1})}
\def\zinv#1.{(\nu_1,s_1;\nu_2,s_2;\dots ;\nu_{#1},s_{#1};0)}
\def\iinv#1.{(\nu_1,s_1;\nu_2,s_2;\dots ;\nu_{#1},s_{#1};\infty)}
\def\scr#1.{\mathbf{\EuScript{#1}}}
\def\mg#1.{\overline {M}_{#1}}
\def\inter#1.{\underset #1{\cdot}}
\def\cate#1.{\text{(\underline{#1})}}
\def\dls#1.{\overrightarrow{#1}}
\def\id#1.{\text{id}_{#1}}


\def\Hom{\operatorname{Hom}}

\def\Proj{\operatorname{Proj}}

\def\dim{\operatorname{dim}}

\def\deg{\operatorname{deg}}

\def\Pic{\operatorname{Pic}}

\def\im{\operatorname{Im}}

\def\mult{\operatorname{mult}}

\def\rest{\operatorname{res}}

\def\vol{\operatorname{vol}}

\def\C`har{\operatorname{char}}

\def\Isom{\operatorname{Isom}}



\def\C{\mathbb C}

\def\e{\Cal E}

\def\e1{E_1}
\def\e2{E_2}

\def\mslc{\mathcal{M}^{\text{slc}}}



\def\mapdown#1{\big\downarrow\rlap{$\vcenter
{\hbox{$\scriptstyle#1$}}$}}

\def\mapse#1{
{\vcenter{\hbox{$\mathop{\smash{\raise1pt\hbox{$\diagdown$}\!\lower7pt
\hbox{$\searrow$}}\vphantom{p}}\limits_{#1}\vphantom{\mapdown{}}$}}}}


\def\VR#1.{height#1pt&\omit&&\omit&&\omit&&\omit&&\omit&\cr}

\def\VRT#1.{height#1pt&\omit&&\omit&\cr}



\makeatletter
\renewcommand*\env@matrix[1][*\c@MaxMatrixCols c]{%
  \hskip -\arraycolsep
  \let\@ifnextchar\new@ifnextchar
  \array{#1}}
\makeatother

\begin{document}
\title{Boundedness of moduli of varieties of general type}
\date{\today}
\author{Christopher D. Hacon}
\address{Department of Mathematics \\
University of Utah\\
155 South 1400 East\\
JWB 233\\
Salt Lake City, UT 84112, USA}
\email{hacon@math.utah.edu}
\author{James M\textsuperscript{c}Kernan}
\address{Department of Mathematics\\
University of California, San Diego\\
9500 Gilman Drive \# 0112\\
La Jolla, CA  92093-0112, USA}
\email{jmckernan@math.ucsd.edu}
\author{Chenyang Xu}
\address{Beijing International Center of Mathematics Research\\ 
5 Yiheyuan Road, Haidian District\\ 
Beijing 100871, China}
\email{cyxu@math.pku.edu.cn}

\thanks{The first author was partially supported by DMS-1300750, DMS-1265285 and a grant
  from the Simons foundation, the second author was partially supported by NSF research
  grant no: 0701101, no: 1200656 and no: 1265263 and this research was partially funded by
  the Simons foundation and by the Mathematische Forschungsinstitut Oberwolfach and the
  third author was partially supported by \lq\lq The Recruitment Program of Global
  Experts\rq\rq\ grant from China.  Part of this work was completed whilst the second and
  third authors were visiting the Freiburg Institute of Advanced Studies and they would
  like to thank Stefan Kebekus and the Institute for providing such a congenial place to
  work.  We are grateful to J\'anos Koll\'ar and Mihai P\u aun for many useful comments
  and suggestions and to the referee for a careful reading of this paper and some very
  helpful comments.}

\begin{abstract} We show that the family of semi log canonical pairs with ample log
canonical class and with fixed volume is bounded.
\end{abstract}

\maketitle

\tableofcontents

\section{Introduction}

The aim of this paper is to show that the moduli functor of semi log canonical stable
pairs is bounded:
\begin{theorem}\label{t_modulifinite} Fix an integer $n$, a positive rational number 
$d$ and a set $I\subset [0,1]$ which satisfies the DCC.

Then the set $\mathfrak{F}_{\text{slc}}(n,d,I)$ of all log pairs $(X,\Delta)$ such that
\begin{enumerate} 
\item $X$ is projective of dimension $n$, 
\item $(X,\Delta)$ is semi log canonical, 
\item the coefficients of $\Delta$ belong to $I$, 
\item $K_X+\Delta$ is an ample $\mathbb{Q}$-divisor, and 
\item $(K_X+\Delta)^n=d$,
\end{enumerate} 
is bounded.  

In particular there is a finite set $I_0$ such that
$\mathfrak{F}_{\text{slc}}(n,d,I)=\mathfrak{F}_{\text{slc}}(n,d,I_0)$.
\end{theorem}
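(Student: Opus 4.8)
The plan is to run a simultaneous induction on the dimension $n$ in which three statements are proved together: the boundedness asserted here, a \emph{DCC for volumes} statement (the set of numbers $\vol(K_X+\Delta)$, as $(X,\Delta)$ runs over log canonical pairs of dimension $n$ with $K_X+\Delta$ big and coefficients in a fixed DCC set, satisfies the DCC), and an \emph{effective birationality} statement (there is an integer $m$, depending only on $n$, the coefficient set, and a positive lower bound for the volume, such that $\phi_{m(K_X+\Delta)}$ is birational). Effective birationality --- which refines the Hacon--McKernan bounds on pluricanonical maps to the setting of DCC coefficients and draws heavily on the ACC for log canonical thresholds --- is where essentially all of the serious work will lie; the other two statements are comparatively formal consequences of it, the minimal model program, and Koll\'ar's theory of stable pairs.

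First I would reduce to the normal case. Given $(X,\Delta)\in\mathfrak F_{\text{slc}}(n,d,I)$, pass to the normalization $\nu\colon(\bar X,\bar\Delta+\bar D)\to(X,\Delta)$, with $\bar D$ the conductor taken with coefficient one. Then $(\bar X,\bar\Delta+\bar D)$ is log canonical, $\nu^*(K_X+\Delta)=K_{\bar X}+\bar\Delta+\bar D$ is ample, the coefficients lie in the DCC set $I\cup\{1\}$, and the volume is still $d$. By Koll\'ar's theory of stable pairs, $(X,\Delta)$ is recovered from $(\bar X,\bar\Delta+\bar D)$ together with an involution of the normalization of $\bar D$ compatible with the different; once the normalizations form a bounded family --- this is the $n$-dimensional log canonical case, treated below --- their double loci, and the gluing involutions on them (being automorphisms of members of a bounded family of polarized pairs), also form bounded families, so boundedness for log canonical pairs yields it for semi log canonical pairs. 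The final assertion, that $I$ may be replaced by a finite subset $I_0$, is then automatic, since the boundedness we produce is witnessed by a bounded family carrying a fixed $\mathbb Q$-boundary divisor, whose coefficients form a finite subset of $I$. It remains to bound the set of log canonical pairs $(X,\Delta)$ with $X$ normal projective of dimension $n$, coefficients in a DCC set $I$, $K_X+\Delta$ ample and $(K_X+\Delta)^n=d$.

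The first step for these is \emph{log birational boundedness}: there is $m=m(n,d,I)$ and a bounded family $\mathcal Z\to T$ such that each such $(X,\Delta)$ admits $t\in T$ and a birational map $X\dashrightarrow Z_t$ under which (the strict transform of) $\Supp\Delta$ together with the exceptional divisors lies in a fixed divisor on $Z_t$. Granting effective birationality with this $m$, the map $\phi_{m(K_X+\Delta)}$ carries $X$ birationally onto a subvariety of bounded degree --- at most $m^nd$, using $(K_X+\Delta)^n=d$ --- in a projective space of bounded dimension, hence onto a member of a bounded family, and pushing forward $\Delta$ (of bounded degree, coefficients in $I$) gives the claim. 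The second step upgrades this to boundedness. Over $T$ one resolves the universal pair and stratifies so that each $(X,\Delta)$ admits a log smooth model $f\colon W\to X$ with $W$ birational to some $Z_t$; then $(X,\Delta)$ is the log canonical model of $(W,\Gamma)$, where $\Gamma$ is the strict transform of $\Delta$ plus the full reduced $f$-exceptional divisor --- a divisor supported on a fixed divisor, with coefficients in $I\cup\{1\}$ --- and $\vol(K_W+\Gamma)=d$. Uniqueness of log canonical models, the fact that the volume is pinned to $d$ (so the coefficient of a boundary component not contracted by $W\dashrightarrow X$ cannot move without changing the volume), and the DCC for volumes together leave only a bounded family of pairs $(X,\Delta)$, finishing the induction.

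I expect the main obstacle to be effective birationality: obtaining a constant $m$ uniform over all DCC coefficient sets and all volumes bounded below forces one to control at once the singularities of the pair --- via the ACC for log canonical thresholds --- and the volumes of the auxiliary pairs obtained by adjunction to log canonical centers --- via their DCC --- within an induction on dimension in which these ingredients feed into one another. Subsidiary difficulties are the input from Koll\'ar's gluing theory in the reduction to the normal case, and the care needed in the passage from log birational boundedness to boundedness, where it is genuinely used that the volume is fixed exactly rather than merely bounded, precisely because a DCC coefficient set can contain infinitely many values.
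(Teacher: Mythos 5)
Your overall architecture (reduce to the normal case via Koll\'ar's gluing theory, establish log birational boundedness via effective birationality, then recover each pair as the log canonical model of a pair on a bounded family of models) matches the paper's, and the reduction to the log canonical case is essentially the paper's argument. But the final step --- ``uniqueness of log canonical models, the fact that the volume is pinned to $d$, and the DCC for volumes together leave only a bounded family'' --- hides the entire new content of the paper, and as written it does not go through. The pair $(W,\Gamma)$ whose log canonical model is supposed to be $(X,\Delta)$ has coefficient-one components (the reduced exceptional divisor and any coefficient-one part of $\Delta$), so it is log canonical but not kawamata log terminal, and the existence of its log canonical model is not known in general: it is not a consequence of \cite{BCHM10}, and it is a conjecture for arbitrary log canonical pairs. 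Knowing that each individual $(X,\Delta)\in\mathfrak{F}$ \emph{is} the log canonical model of some fibre of a bounded family does not bound the set of such log canonical models; one must show that the locus of fibres $(Z_u,\Phi)$ admitting a log canonical model is constructible and that over each stratum the relative log canonical model exists and restricts fibrewise to the log canonical model of each fibre. This is exactly the paper's \eqref{t_one-smooth}, \eqref{c_dense} and \eqref{c_deformation} (deformation invariance of the existence of good minimal models in log smooth families), proved via the Berndtsson--P\u aun extension theorem, the MMP in families, and an abundance argument by induction on the strata. Your proposal contains no substitute for this, and it is the genuinely hard step rather than effective birationality, which is quoted from \cite{HMX12}.

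There is a secondary conflation in the same step. If $f\colon\map W.X.$ is an honest log resolution then indeed $\vol(W,K_W+\Gamma)=d$, but such a $W$ varies with $X$ and does not lie in a bounded family; the model that lies in the bounded family is $Z_t$, where the map $\rmap X.Z_t.$ is merely birational, and there one only gets $\vol(Z_t,K_{Z_t}+\Phi)\ge d$, with strict inequality possible because the $f^{-1}$-exceptional divisors enter $\Phi$ with coefficient one. Forcing equality on a model inside the bounded family is the content of the paper's \eqref{l_point} and \eqref{l_dcc}: one minimises the volume over blow ups of strata of $B$ using the DCC of volumes, a diagonal sequence argument, and deformation invariance of log plurigenera. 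You also do not address the fact that a semi log canonical $X$ may be reducible with an a priori unbounded number of components; this is handled by \eqref{l_sum} together with the DCC of volumes. These last two points are repairable along the lines you gesture at, but the missing deformation invariance of good minimal models is a genuine gap.
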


The main new technical result we need to prove \eqref{t_modulifinite} is to show that
abundance behaves well in families:
\begin{theorem}\label{t_one-smooth} Suppose that $(X,\Delta)$ is a log pair where
the coefficients of $\Delta$ belong to $(0,1]\cap \mathbb{Q}$.  Let $\pi\colon\map X.U.$
be a projective morphism to a smooth variety $U$.  Suppose that $(X,\Delta)$ is log smooth
over $U$.

If there is a closed point $0\in U$ such that the fibre $(X_0,\Delta_0)$ has a good
minimal model then $(X,\Delta)$ has a good minimal model over $U$ and every fibre has a
good minimal model.
\end{theorem}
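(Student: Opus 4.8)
The plan is to deduce the family statement from deformation-invariance of plurigenera-type results together with the recently available machinery on existence of good minimal models. First I would reduce to the case where $U$ is a curve (or even a trait), by a standard spreading-out and Noetherian induction argument: the locus over which a good minimal model exists (fibrewise, and relatively) is well behaved under generization, so it suffices to check what happens as we approach a point where abundance is known. After shrinking $U$, I may assume $\Delta$ has no vertical components meeting $X_0$ in a complicated way, and that the relevant line bundles are flat over $U$.

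Next, the heart of the argument. Over the fibre $X_0$ we are given that $(X_0,\Delta_0)$ has a good minimal model, so $K_{X_0}+\Delta_0$ is semiample on that model; equivalently, the Kodaira dimension equals the numerical dimension on $X_0$. The key step is to propagate this: I would run a $(K_X+\Delta)$-MMP over $U$ — such an MMP exists and terminates by the results on existence of minimal models for klt/lc pairs, given that we know a good minimal model exists on one fibre (one can use that the MMP over $U$ restricts to an MMP on $X_0$, which must terminate with a good minimal model there, to force termination relatively, or invoke the special termination / ACC machinery). This produces a relative minimal model $(X',\Delta')$ over $U$. Then I would show $K_{X'}+\Delta'$ is semiample over $U$: on the central fibre it is semiample (it is a good minimal model of $(X_0,\Delta_0)$, possibly after further MMP on the fibre which can be lifted), and semiampleness is an open/closed condition that can be checked by invariance of plurigenera — the pushforwards $\pi_*\sO_{X'}(m(K_{X'}+\Delta'))$ are locally free and compatible with base change for $m$ sufficiently divisible, so the fibrewise semiample system glues to a morphism over $U$. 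Finally, base change gives that every fibre of $X'\to U$ is a good minimal model of the corresponding fibre of $X\to U$, hence every fibre of $(X,\Delta)$ has a good minimal model.

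I expect the main obstacle to be precisely the deformation-invariance input: showing that $\pi_*\sO_{X'}(m(K_{X'}+\Delta'))$ is locally free and commutes with base change near $0$, i.e. that the "number of sections of the log canonical system" is constant in the family. This is a log, lc (not klt), $\mathbb Q$-divisor version of Siu's invariance of plurigenera, and the lc coefficients together with the non-integrality of $\Delta$ make the standard extension-theorem arguments delicate; one must combine an extension theorem for adjoint bundles with a careful bookkeeping of the different $\Diff$ on the central fibre, and handle the case where the minimal model only exists after contracting components of $\Delta_0$. A secondary difficulty is ensuring the relative MMP terminates without already assuming abundance over all of $U$ — this requires using special termination in a neighbourhood of the central fibre together with the fact that a good minimal model of $(X_0,\Delta_0)$ is reached in finitely many steps. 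Once these two points are secured, the rest is assembling standard semicontinuity and base-change statements.
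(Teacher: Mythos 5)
Your proposal correctly identifies the two pressure points (controlling the relative MMP on the central fibre, and a deformation-invariance input), but at both points it asserts what actually has to be proved, so there are genuine gaps. First, the opening reduction is circular: you say the locus where a good minimal model exists is ``well behaved under generization'' and so one can reduce to a curve by Noetherian induction. But the passage from the closed fibre $0$ to the generic fibre is precisely the content of the theorem; it is not a semicontinuity statement and does not follow formally. The paper has to prove it (Lemma 6.1), and only \emph{after} the generic fibre is handled does it invoke [HX11, 1.1] to spread out over an open set and then a separate argument (Lemma 6.2) to get every fibre. Second, the claim that ``the MMP over $U$ restricts to an MMP on $X_0$'' is false in general: a step of the relative $(K_X+\Delta)$-MMP can fail to restrict to a birational contraction of $X_0$ (the flipping locus can contain, or add components to, the central fibre). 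The paper only gets this after a nontrivial preliminary reduction: blow up as in Lemma 2.16 and replace $\Delta$ by $\Theta=\Delta-\Delta\wedge N_\sigma(X,K_X+\Delta)$ (using Lemmas 2.6 and 5.3) so that $\mathbf{B}_-(X_0,K_{X_0}+\Delta_0)$ contains no non-canonical centres; only under that hypothesis does Lemma 3.1 show each step is negative on the central fibre.

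Third, the role of invariance of plurigenera is different from what you describe. The Berndtsson--P\u aun extension theorem applies only when $\rfdown\Delta.=0$ (after arranging that the components of $\Delta$ are disjoint), so it handles the klt part; it does \emph{not} directly give that ``the fibrewise semiample system glues''. Knowing that $h^0(X_u, m(K_{X_u}+\Delta_u))$ is constant yields invariance of the Kodaira dimension, but semi-ampleness of the minimal model on the generic fibre still has to be extracted. The paper does this by a case division: if $K_X+(1-\epsilon)S+B$ is not pseudo-effective one runs to a Mori fibre space and descends the divisor from the base; otherwise one shows $K_{X_0}+\Delta_0$ is abundant, transports abundance to the generic fibre via the klt invariance statement, proves semi-ampleness of the restriction to the coefficient-one part by induction on dimension together with the slc semi-ampleness theorem, and concludes with the base point free theorem. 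None of this machinery is present in your outline, and without it the step ``semiampleness is an open/closed condition'' is an assertion of the theorem rather than a proof of it.
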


\begin{corollary}\label{c_dense} Let $(X,\Delta)$ be a log pair where $\Delta$ is a 
$\mathbb{Q}$-divisor and let $\map X.U.$ be a flat projective morphism to a variety $U$.
Suppose that $U$ is smooth and the support of $\Delta$ contain neither a component of any
fibre nor a codimension one component of the singular locus of a fibre.

Then the subset $U_0\subset U$ of points $u\in U$ such that the fibre $(X_u,\Delta_u)$ is
divisorially log terminal and has a good minimal model is constructible.  
\end{corollary}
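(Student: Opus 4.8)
The plan is to prove constructibility by Noetherian induction, via the standard criterion that a subset $S$ of a Noetherian topological space is constructible if and only if, for every irreducible closed subset $Z$, the set $S\cap Z$ is either not dense in $Z$ or contains a non-empty open subset of $Z$. Replacing $U$ by an irreducible closed subvariety, passing to its smooth locus and base changing $\map X.U.$, it therefore suffices to show: after possibly replacing the (now smooth, irreducible) $U$ by a dense open subset, the set $U_0$ is either empty or equal to $U$.

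First I would make a chain of reductions, each shrinking $U$ to a dense open subset, using generic flatness and generic smoothness in characteristic zero: we may assume $\pi$ is flat, $K_{X/U}+\Delta$ is $\Q$-Cartier, the general fibre $X_u$ is normal, and every component of $\Delta$ is flat over $U$ and dominates $U$ (components not dominating $U$ disappear after shrinking). Deleting zero components and noting that a surviving component of $\Delta$ with coefficient $<0$ or $>1$ forces every general fibre to be non-dlt (so then $U_0$ is nowhere dense and we are done), we may assume $\Delta\ge 0$ with coefficients in $(0,1]\cap\Q$. Now fix a log resolution $f\colon\map Y.X.$ of $(X,\Delta)$ and write
\[
K_Y+f^{-1}_*\Delta = f^*(K_X+\Delta)+\sum_i a_iE_i,
\]
where the $E_i$ are the $f$-exceptional prime divisors and $a_i=a(E_i,X,\Delta)$. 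Shrinking $U$ once more, generic smoothness guarantees that $Y\to U$ is smooth, that each stratum of the snc divisor $f^{-1}_*\Delta+\sum_iE_i$ is smooth over $U$, and that for every $u\in U$ the induced morphism $f_u\colon\map Y_u.X_u.$ is a log resolution of $(X_u,\Delta_u)$ with $\Exc(f_u)=\Exc(f)|_{Y_u}$; moreover adjunction gives $a((E_i)_u,X_u,\Delta_u)=a_i$ for every $u$.

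Next I would dispose of the dlt condition. Since $f_u$ is a log resolution, $(X_u,\Delta_u)$ is log canonical if and only if $a_i\ge -1$ for all $i$, and it is dlt if and only if $a_i>-1$ for all $i$; in particular both conditions are independent of $u$. Hence, after the above reductions, either every fibre $(X_u,\Delta_u)$ is dlt or no fibre is, and in the latter case $U_0=\emptyset$ and we are done. So we may assume every fibre is dlt, that is, $a_i>-1$ for all $i$.

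Finally I would pass to the resolution and invoke Theorem \ref{t_one-smooth}. Choose rational numbers $c_i\in(0,1]$ with $c_i>-a_i$ (possible since $-1<a_i$, so $-a_i<1$) and set $\Gamma=f^{-1}_*\Delta+\sum_ic_iE_i$ and $E=\sum_i(a_i+c_i)E_i$, so that $K_Y+\Gamma=f^*(K_X+\Delta)+E$ with $E\ge 0$ and $f$-exceptional; then $(Y,\Gamma)$ has coefficients in $(0,1]\cap\Q$ and, after our shrinking of $U$, is log smooth over the smooth variety $U$. For every $u$ the divisor $E_u$ is effective and $f_u$-exceptional, so by a standard argument (running the $(K_{Y_u}+\Gamma_u)$-MMP over $X_u$ contracts exactly $E_u$, and conversely one pulls back and runs the MMP) the pair $(Y_u,\Gamma_u)$ has a good minimal model if and only if $(X_u,\Delta_u)$ does; therefore $U_0=\{\,u\in U\mid (Y_u,\Gamma_u)\text{ has a good minimal model}\,\}$. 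Applying Theorem \ref{t_one-smooth} to $(Y,\Gamma)\to U$, this set is empty or all of $U$, which completes the Noetherian induction. The genuinely hard input is Theorem \ref{t_one-smooth} itself; within this argument the points requiring care are the simultaneous spreading-out of a single log resolution over $U$ (so that both log canonicity and the dlt property become $u$-independent discrepancy conditions) and the equivalence of good minimal models under the effective exceptional modification $f$.
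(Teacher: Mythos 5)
Your overall route is the same as the paper's: reduce constructibility to showing that a dense $U_0$ contains a non-empty open set, spread out a log resolution so that it is log smooth over $U$, transfer the good-minimal-model condition up to the resolution, and quote \eqref{t_one-smooth}. However, there is a genuine gap in your treatment of the dlt condition. You assert that, for your \emph{fixed} spread-out log resolution $f_u\colon\map Y_u.X_u.$, the fibre $(X_u,\Delta_u)$ is dlt if and only if $a_i>-1$ for all $i$. Only the ``if'' direction is true. Dlt-ness is \emph{not} detected by an arbitrary log resolution: by Szab\'o's criterion one needs a resolution that is an isomorphism over the snc locus, and a perfectly good dlt (even log smooth) pair admits log resolutions with exceptional divisors of discrepancy exactly $-1$ --- for instance, blow up the intersection of two coefficient-one components of $\Delta_u$. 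In that situation your dichotomy fails: some $a_i=-1$, yet every fibre is dlt and may have a good minimal model, so $U_0$ can be dense while your argument declares it empty. The same case also breaks your construction of $\Gamma$, since you require $c_i\in(0,1]$ with $c_i>-a_i$, which is impossible when $a_i=-1$. Note that this is not an exotic corner case: it occurs whenever $\rfdown\Delta.$ has intersecting components, which is exactly the regime this paper cares about.

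Two repairs are available, and the paper uses both. First, the constructibility of the dlt condition is isolated in \eqref{l_dlt}, whose proof handles the discrepancy $-1$ divisors correctly: if $(X_u,\Delta_u)$ is dlt for a dense set of $u$ and $F$ is an exceptional divisor of log discrepancy zero, then the centre of $F_u$ cannot lie in the non-snc locus of $(X_u,\Delta_u)$, so $(X_u,\Delta_u)$ is snc at the generic point of that centre, and hence so is $(X,\Delta)$ at the generic point of $f(F)$; this is what lets one conclude dlt-ness of the total space rather than reading it off from discrepancy signs. Second, once all fibres are known to be dlt there is no need for your strict inequality: take $\Gamma$ with coefficient $\max(0,-a_i)\in[0,1]$ on $E_i$, so that $\Gamma\geq 0$ and $E\geq 0$ have no common components; divisors with $a_i=-1$ simply enter $\Gamma$ with coefficient one, which is permitted by the hypothesis of \eqref{t_one-smooth} (coefficients in $(0,1]$), and the equivalence of good minimal models for $(Y_u,\Gamma_u)$ and $(X_u,\Delta_u)$ is then literally \cite[2.10]{HX11}. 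With these two changes your argument coincides with the paper's proof.
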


\begin{corollary}\label{c_deformation} Let $\pi\colon\map X.U.$ be a projective morphism 
to a smooth variety $U$ and let $(X,\Delta)$ be log smooth over $U$.  Suppose that the
coefficients of $\Delta$ belong to $(0,1]\cap \mathbb{Q}$.

If there is a closed point $0\in U$ such that the fibre $(X_0,\Delta_0)$ has a good
minimal model then the restriction morphism
\[
\map {\pi_*\ring X.(m(K_X+\Delta))}.{H^0(X_u,\ring X_u.(m(K_{X_u}+\Delta_u)))}.
\]
is surjective for any $m\in\mathbb{N}$ such that $m\Delta$ is integral and for any closed
point $u\in U$.

In particular if $\psi\colon\rmap X.Z.$ is the ample model of $(X,\Delta)$ then
$\psi_u\colon\rmap X_u.Z_u.$ is the ample model of $(X_u,\Delta_u)$ for every closed point
$u\in U$.
\end{corollary}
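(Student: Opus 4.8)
The plan is to deduce the corollary from Theorem \ref{t_one-smooth} by replacing $(X,\Delta)$ with a relative good minimal model and then controlling the pluricanonical sheaves via vanishing theorems. By Theorem \ref{t_one-smooth} the pair $(X,\Delta)$ has a good minimal model over $U$; I would produce one by running a $(K_X+\Delta)$-MMP over $U$ with scaling of a general ample divisor, say $\psi\colon X\dashrightarrow Y$ over $U$ with $\Gamma=\psi_*\Delta$ and $K_Y+\Gamma$ semiample over $U$. Because $(X,\Delta)$ is log smooth over $U$, every step of this MMP is a birational map over $U$ whose restriction to any fibre $X_u$ is either an isomorphism or the corresponding step of a $(K_{X_u}+\Delta_u)$-MMP (the contracted curves lie in a fibre and $(K_X+\Delta)|_{X_u}=K_{X_u}+\Delta_u$), so $\psi_u\colon X_u\dashrightarrow Y_u$ is a good minimal model of $(X_u,\Delta_u)$ for every $u$. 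Since each flip and divisorial contraction preserves the divisorial algebra $\bigoplus_m\sO(m(K+\Delta))$, both for the family over $U$ and on each fibre, the restriction morphism in the statement for $(X,\Delta)/U$ is identified with the one for $(Y,\Gamma)/U$; hence we may assume $K_X+\Delta$ is semiample over $U$.

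Next, let $g\colon X\to Z$ be the ample model over $U$, with structure morphism $\tau\colon Z\to U$, so that $K_X+\Delta=g^*A$ for a $\tau$-ample $\mathbb{Q}$-divisor $A$ on $Z$ and $\pi_*\sO_X(m(K_X+\Delta))\cong\tau_*\sO_Z(mA)$ for every $m$ with $m\Delta$ integral (projection formula, using $g_*\sO_X=\sO_Z$). Relative Serre vanishing shows that the formation of $\tau_*\sO_Z(mA)$ commutes with base change for $m\gg0$, and from this, together with flatness of the relative $\Proj$, one gets that $Z\to U$ is flat with $Z_u$ the ample model of $(X_u,\Delta_u)$ and $A_u$ the associated ample divisor. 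So it suffices to prove that for every $m$ with $m\Delta$ integral and every closed point $u$ the map $\tau_*\sO_Z(mA)\otimes k(u)\to H^0(Z_u,\sO_{Z_u}(mA_u))$ is surjective; since $h^0$ is upper semicontinuous and $\chi(Z_u,\sO_{Z_u}(mA_u))$ is locally constant, it is enough to show $H^i(Z_u,\sO_{Z_u}(mA_u))=0$ for all $i>0$.

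For $m\geq2$ I would obtain this vanishing from the canonical bundle formula for $g$ (equivalently for $g_u$): one may write $A\sim_{\mathbb{Q}}K_Z+B_Z+M_Z$ with $(Z,B_Z)$ sub-klt and $M_Z$ relatively nef, whence $mA\sim_{\mathbb{Q}}K_Z+B_Z+(m-1)A+M_Z$ with $(m-1)A+M_Z$ relatively ample, and relative Kawamata--Viehweg vanishing on $Z$, restricted to the fibre, gives $H^i(Z_u,\sO_{Z_u}(mA_u))=0$ for $i>0$. The value $m=1$ occurs only when $\Delta$ is reduced, and is treated directly on $X$: log smoothness of $(X,\Delta)$ over $U$ together with Koll\'ar's theorem on higher direct images of log dualizing sheaves shows that $\pi_*\omega_{X/U}(\Delta)$ is locally free and compatible with base change. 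The final assertion is then formal: surjectivity of the restriction morphism in every admissible degree means that $\bigoplus_m\pi_*\sO_X(m(K_X+\Delta))$ has formation compatible with base change to $u$, so the relative $\Proj$ commutes with base change and $\psi_u\colon X_u\dashrightarrow Z_u$ is the ample model of $(X_u,\Delta_u)$.

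I expect the main obstacles to be the two compatibility facts glossed over above: that the relative MMP and the formation of the ample model restrict well to every fibre --- this is exactly where log smoothness over $U$ is essential, and where one must rule out degenerations on special fibres --- and the cohomology vanishing $H^i(Z_u,\sO_{Z_u}(mA_u))=0$ for all $m$ with $m\Delta$ integral rather than just $m\gg0$, for which the canonical bundle formula is the natural tool but requires care with the $\mathbb{Q}$-Cartier property of the moduli part $M_Z$ and with the sub-klt boundary $B_Z$.
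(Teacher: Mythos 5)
Your proposal diverges from the paper's argument (which reduces to a curve base and proves surjectivity directly via Koll\'ar's injectivity theorem on a log resolution of the graph of the relative minimal model, with carefully chosen round--ups; see \eqref{l_deformation}), and it has two genuine gaps. The first is the assertion that every step of the relative $(K_X+\Delta)$-MMP restricts, on \emph{every} fibre, to an isomorphism or a step of the fibrewise MMP. Log smoothness over $U$ alone does not give this: a flip can create a new component of a special fibre, and the restriction of a step to a fibre need not be a birational contraction. In the paper this is exactly the content of \eqref{l_central}, whose hypotheses require that $\mathbf{B}_-(X_0,K_{X_0}+\Delta_0)$ contain no non canonical centres; to arrange that one must first blow up via \eqref{l_up} and replace $\Delta$ by $\Theta=\Delta-\Delta\wedge N_{\sigma}(X,K_X+\Delta)$ (using \eqref{l_commute} and \eqref{l_base}). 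Without this preparation your identification of the restriction morphism for $(X,\Delta)/U$ with the one for $(Y,\Gamma)/U$ is unjustified, and so is the claim that $\psi_u$ is a good minimal model of every fibre.

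The second gap is the cohomological endgame on the ample model $Z$. You invoke the canonical bundle formula to write $A\sim_{\mathbb{Q}}K_Z+B_Z+M_Z$ with $(Z,B_Z)$ sub-klt, but the hypotheses allow $\rfdown\Delta.\neq 0$ (indeed coefficient-one components are the essential case for the moduli application), so $(Z,B_Z)$ is at best sub-lc and Kawamata--Viehweg vanishing does not apply as stated. Moreover ``$m\Delta$ integral'' does not make $m(K_Z+\Delta_Z)$ Cartier on the log canonical model, so $\sO_{Z_u}(mA_u)$ involves rounding a Weil $\mathbb{Q}$-divisor on a singular fibre, where neither the vanishing nor the flatness and base-change statements for the divisorial sheaves $\sO_Z(mA)$ (needed for your semicontinuity--plus--Euler-characteristic reduction) are automatic; the latter claims are also partly circular with the ``in particular'' assertion you are trying to prove. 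The paper sidesteps all of this by cutting $U$ down to a curve and proving surjectivity of restriction directly from the injectivity theorem
\[
\map {H^1(W,\ring W.(L-W_0))}.{H^1(W,\ring W.(L))}.
\]
on a smooth model $W$, where the relevant fractional parts are simple normal crossing by construction.
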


The moduli space of stable curves is one of the most intensively studied varieties.  The
moduli space of stable varieties of general type is the higher dimensional analogue of the
moduli space of curves.  Unfortunately constructing this moduli space is more complicated
than constructing the moduli space of curves.  In particular it does not seem easy to use
GIT to construct the moduli space in higher dimensions; for example see \cite{WX14} for a
precise example of how badly behaved the situation can be.  Instead Koll\'ar and
Shepherd-Barron initiated a program to construct the moduli space in all dimensions in
\cite{KSB88}.  This program was carried out in large part by Alexeev for surfaces,
\cite{Alexeev94} and \cite{Alexeev96}.

We recall the definition of the moduli functor.  For simplicity, in the definition of the
functor, we restrict ourselves to the case with no boundary.  We refer to the forthcoming
book \cite{Kollar10} for a detailed discussion of this subject and to \cite{Kollar10a} for
a more concise survey.
\begin{definition}[Moduli of slc models, cf. {\cite[Definition 29]{Kollar10a}}]
Let $H(m)$ be an integer valued function. The moduli functor of semi log canonical models
with Hilbert function $H$ is
\[
\mslc_H(S)=\left\{
\begin{aligned}
&\text{flat projective morphisms $\map X.S.$, whose }\\ 
&\text{fibres are slc models with ample canonical class  	}\\
&\text{and Hilbert function $H(m)$, $\omega_X$ is flat over $S$ and}\\
&\text{all reflexive powers of $\omega_X$ commute with base change.}\\	
\end{aligned}
\right\}
\]
\end{definition}

In this paper we focus on the problem of showing that the moduli functor is bounded, so
that if we fix the degree, we get a bounded family.  The precise statement is given in
\eqref{t_modulifinite}.  We now describe the proof of \eqref{t_modulifinite}.  We first 
explain how to reduce to \eqref{t_one-smooth}.  

For curves if one fixes the genus $g$ then the moduli space is irreducible.  In particular
stable curves are always limits of smooth curves.  This fails in higher dimensions, so
that there are components of the moduli space whose general point corresponds to a
non-normal variety, or better, a semi log canonical variety.

Fortunately, cf. \cite[23, 24]{Kollar11} and \cite[5.13]{Kollar13}, one can reduce
boundedness of semi log canonical pairs to boundedness of log canonical pairs in a
straightforward manner.  If $(X,\Delta)$ is semi log canonical then let $n\colon\map Y.X.$
be the normalisation.  $X$ has nodal singularities in codimension one, so that informally
$X$ is obtained from $Y$ by identifying points of the double locus, the closure of the
codimension one singular locus.  More precisely, we may write
\[
K_Y+\Gamma=n^*(K_X+\Delta),
\]
where $\Gamma$ is the sum of the strict transform of $\Delta$ plus the double locus and
$(Y,\Gamma)$ is log canonical.  If $K_X+\Delta$ is ample then $(X,\Delta)$ is determined
by $(Y,\Gamma)$ and the data of the involution $\tau\colon\map S.S.$ of the normalisation
of the double locus.  Note that the involution $\tau$ fixes the different, the divisor 
$\Theta$ defined by adjunction in the following formula:
\[
(K_Y+\Gamma)|_S=K_S+\Theta.
\]

Conversely, if $(Y,\Gamma)$ is log canonical, $K_Y+\Gamma$ is ample, $\tau$ is an
involution of the normalisation $S$ of a divisor supported on $\rfdown\Gamma.$ which fixes
the different, then we may construct a semi log canonical pair $(X,\Delta)$, whose
normalisation is $(Y,\Gamma)$ and whose double locus is $S$.  

Note that $\tau$ fixes the pullback $L$ of the very ample line bundle determined by a
multiple of $K_X+\Delta$.  The group of all automorphisms of $S$ which fixes $L$ is a
linear algebraic group.  It follows, by standard arguments, that if $(Y,\Gamma)$ is
bounded then $\tau$ is bounded.  

Thus to prove \eqref{t_modulifinite} it suffices to prove the result when $X$ is normal,
that is, when $(X,\Delta)$ is log canonical, cf. \eqref{p_modulifinite}.  The first
problem is that a priori $X$ might have arbitrarily many components.  Note that if $X=C$
is a curve of genus $g$ then $K_X$ has degree $2g-2$ and so $X$ has at most $2g-2$
components.  In higher dimensions the situation is more complicated since $K_X$ is not
necessarily Cartier and so $d$ is not necessarily an integer.

Instead we use \cite[1.3.1]{HMX12}, which was conjectured by Alexeev \cite{Alexeev94} and
Koll\'ar \cite{Kollar92b}:
\begin{theorem}\label{t_dcc} Fix a positive integer $n$ and a set $I\subset [0,1]$
which satisfies the DCC.  Let $\mathfrak{D}$ be the set of log canonical pairs
$(X,\Delta)$ such that the dimension of $X$ is $n$ and the coefficients of $\Delta$ belong
to $I$.

Then the set
$$
\{\, \vol(X,K_X+\Delta) \,|\, (X,\Delta)\in \mathfrak{D} \,\},
$$
also satisfies the DCC.
\end{theorem}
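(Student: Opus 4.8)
The plan is to argue by induction on $n=\dim X$, after reducing to the log smooth case. A set of nonnegative reals satisfies the DCC exactly when it contains no infinite strictly decreasing sequence of positive numbers, so throughout we may discard the pairs of volume zero and it is enough to rule out such a sequence. For the reduction, given $(X,\Delta)\in\mathfrak D$ let $f\colon\map Y.X.$ be a log resolution and put $\Gamma=f^{-1}_*\Delta+\Exc(f)$. Since $(X,\Delta)$ is log canonical, $K_Y+\Gamma=f^*(K_X+\Delta)+F$ with $F\ge 0$ and $f$-exceptional, so $f_*\sO_Y(m(K_Y+\Gamma))=\sO_X(m(K_X+\Delta))$ for every $m$ with $m\Delta$ integral, and hence $\vol(Y,K_Y+\Gamma)=\vol(X,K_X+\Delta)$. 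The coefficients of $\Gamma$ lie in $I\cup\{1\}$, which again satisfies the DCC, so it suffices to treat log smooth pairs with coefficients in a fixed DCC set; I keep the notation $(X,\Delta)$, $I$, $n$.

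For $n=1$ the claim is elementary: $\vol(X,K_X+\Delta)=\max\{0,\,2g-2+\sum_j a_j\}$, where $g$ is the genus and the $a_j\in I$. Along a strictly decreasing sequence of positive volumes the values are bounded, so $g$ is bounded; since a DCC subset of $\mathbb R_{\ge 0}$ has a minimum, which is positive unless the set is $\{0\}$, the number of nonzero $a_j$ is bounded; and the set of sums of a bounded number of elements of a DCC set is again a DCC set. Hence for each $V$ the positive volumes bounded by $V$ form a DCC set, contradicting the existence of the sequence.

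For the inductive step assume the theorem in dimensions $<n$, and suppose for contradiction that there are log smooth projective pairs $(X_i,\Delta_i)$ of dimension $n$ with coefficients in $I$ whose volumes $v_i=\vol(X_i,K_{X_i}+\Delta_i)$ form an infinite strictly decreasing sequence, so $0<v_i\le v_1$ and each $K_{X_i}+\Delta_i$ is big. The key point is that such a family is \emph{log birationally bounded}: there is a projective morphism of finite type $\map{\mathcal X}.T.$ and a reduced divisor $\mathcal D\subset\mathcal X$ such that for each $i$ there are a point $t_i\in T$ and a common resolution of $X_i$ and $\mathcal X_{t_i}$ on which the strict transform of $\Supp\Delta_i$ is contained in the union of the strict transform of $\mathcal D_{t_i}$ and the exceptional locus. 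Granting this, after a volume preserving birational modification $v_i$ becomes the volume of $K+\Theta_i$ for a pair on a model in a finite union of bounded families, with $\Theta_i$ supported on a fixed finite set of prime divisors and with coefficients in $I\cup\{1\}$. Now the volume is a nondecreasing function of those coefficients, and a finite power of a DCC set is a well partial order (Dickson's lemma, a DCC subset of $\mathbb R$ being totally ordered, hence well ordered), so it contains no sequence along which a nondecreasing function strictly decreases; passing to a single component of $T$ and a single choice of support yields the contradiction.

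It remains to establish the boundedness claim, and this is the main obstacle: with constants depending only on $n$, $I$, and the bound $v_1$ on the volume, one must produce on each $X_i$ a linear system of bounded degree defining a birational map — effective birationality for log canonical pairs with DCC coefficients. The mechanism is to cut $(X_i,\Delta_i)$ by general members of $|m(K_{X_i}+\Delta_i)|$, raise the boundary to a log canonical threshold to create a divisorial log canonical centre $S_i$, restrict by adjunction to a pair $(S_i,\Theta_i)$ of dimension $n-1$ whose coefficients lie in the DCC set that adjunction attaches to the set of thresholds that occur, bound $\vol(S_i,K_{S_i}+\Theta_i)$ by the inductive hypothesis, and lift this control back to $X_i$; the thresholds are themselves constrained by the ACC for log canonical thresholds, which in dimension $\le n$ is obtained from the DCC for volumes in dimension $<n$. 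Thus the whole argument is a simultaneous induction on dimension of the DCC for volumes and the ACC for log canonical thresholds, and the technical heart is precisely the effective birationality step of \cite{HMX12}, where uniform numerical bounds are extracted from the DCC hypothesis alone.
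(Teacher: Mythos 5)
The paper offers no proof of this statement to compare against: Theorem~\ref{t_dcc} is quoted verbatim from \cite[1.3]{HMX12}, and the surrounding text says so explicitly. What you have written is therefore an outline of the proof \emph{in} \cite{HMX12}, and as an outline it has the right skeleton: reduce to log smooth pairs with coefficients in $I\cup\{1\}$ (your volume-preservation argument there is correct), induct on dimension, invoke log birational boundedness, and conclude from monotonicity of the volume in the coefficients together with the well-partial-order property of $I^N$. But you are candid that the technical heart --- effective birationality with constants depending only on $n$, $I$ and an upper bound on the volume, which is what yields log birational boundedness --- is simply deferred to \cite{HMX12}. Since in \cite{HMX12} that step is established by a simultaneous induction with the DCC for volumes and the ACC for log canonical thresholds, deferring it leaves a roadmap rather than a proof.

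There is also a concretely false step in the part you do argue. You claim that ``after a volume preserving birational modification $v_i$ becomes the volume of $K+\Theta_i$ for a pair on a model in a finite union of bounded families, with $\Theta_i$ supported on a fixed finite set of prime divisors.'' No such modification exists in general. Log birational boundedness gives a fixed log smooth $(Z,B)$ and maps $f_i\colon X_i\dashrightarrow Z$ with $\Supp f_{i*}\Delta_i\subset \Supp B$; pushing the boundary forward only yields $\vol(Z,K_Z+f_{i*}\Delta_i)\ge v_i$, and strict inequality genuinely occurs, the defect being carried by divisors extracted over $Z$. The blow-ups of $X_i$ (or of $Z$) on which the volume \emph{is} preserved --- strict transform plus all exceptionals with coefficient one --- do not lie in a bounded family, so Dickson plus monotonicity applies only to the upper bounds $\vol(Z,K_Z+f_{i*}\Delta_i)$, not to the $v_i$ themselves. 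Closing this gap requires the limiting and diagonalization argument over an exhaustive tower of blow-ups of the strata of $B$, minimizing the supremum of the volumes and showing the infimum is attained; this is exactly the content of \eqref{l_point} in the present paper and of the corresponding section of \cite{HMX12}, and your sketch omits it entirely.
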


Since there are only finitely many ways to write $d$ as a sum of elements $\llist d.k.$
taken from a set which satisfies the DCC, cf. \eqref{l_sum}, we are reduced to proving
\eqref{t_modulifinite} when $X$ is normal and irreducible.  

Let $\mathfrak{F}\subset \mathfrak{F}_{\text{slc}}(n,d,I)$ be the subset of all log
canonical pairs $(X,\Delta)$ where $X$ is irreducible.  Since the coefficients of $\Delta$
belong to a set which satisfies the DCC, \cite[1.3.3]{HMX12} implies that some fixed
multiple of $K_X+\Delta$ defines a birational map to projective space.  As the degree of
$K_X+\Delta$ is bounded by assumption, $\mathfrak{F}$ is log birationally bounded, that
is, there is a log pair $(Z,B)$ and a projective morphism $\pi\colon\map Z.U.$, such that
given any $(X,\Delta)\in\mathfrak{F}$, we may find $u\in U$ such that $X$ is birational to
$Z_u$ and the strict transform $\Phi$ of $\Delta$ plus the exceptional divisors are
components of $B_u$.

To fix ideas, it might help to introduce an example to illustrate some of the ideas that
go into the proof that $\mathfrak{F}$ is bounded.  We start with $\pr 2.$ and $k\geq 4$
lines.  The subscript $_0$ will indicate we are working with this example.  The variety
$U_0$ is the set of all configuations of $k$ lines, $Z_0=\pr 2.\times U_0$ and $B_0$ is
the reduced divisor corresponding to the lines.  We take $I_0=\{1/2,1\}$.

\cite[1.6]{HMX12} proves that $\mathfrak{F}$ is a bounded family provided if in addition
we assume that the total log discrepancy of $(X,\Delta)$ is bounded away from zero
(meaning that the coefficients of $\Delta$ are bounded away from one as well as the log
discrepancy is bounded away from zero).  For applications to moduli this is far too
strong; the double locus occurs with coefficient one.

Instead we proceed as follows.  By standard arguments we may assume that $U$ is smooth,
the morphism $\pi$ is smooth and its restriction to any strata of $B$ is smooth, that is,
$(Z,B)$ is log smooth over $U$.  In the case of lines in $\pr 2.$, we simply replace $U_0$
by the open subset of lines in linear general position; the case of lines not in general
position is handled by Noetherian induction.  We first reduce to the case when
$\vol(Z_u,K_{Z_u}+\Phi)=d$.  We are looking for a higher model $\map Y.Z.$ such that if
$C$ is the strict transform of $B$ plus the exceptionals and $u$ is a point then
$\vol(Y_u,K_{Y_u}+\Gamma)=d$ where $\Gamma$ is the transform of $\Delta$ plus the
exceptionals.  At this point we use some of the ideas that go into the proof of
\cite[1.9]{HMX10}.

We describe how to reduce to the case when $U$ is a point.  We illustrate the argument for
lines in $\pr 2.$; the argument in the general case is very similar.  In this case the
elements $(X,\Delta)\in \mathfrak{F}$ are constructed as follows.  Start with $\pr 2.$ and
a collection of $k$ lines in general position.  Let $\map S.{\pr 2.}.$ be any sequence of
smooth blow ups and let $D$ be the strict transform of the lines plus the exceptional
divisors.  Now blow down some $-1$-curves on $S$ to obtain $X$.  Let $\Delta$ be any
divisor supported on the pushforward of $D$ whose coefficients are $0$, $1/2$ or $1$.
Note that there are some restrictions on which $-1$-curves we blow down; we are only
allowed to blow down components of $D$ and we are also assuming that $(X,\Delta)$ is log
smooth.

To proceed further we want to understand how the volume changes for one smooth blow up of
a smooth surface $\pi\colon\map T.S.$.  Working locally, we may assume that $S=\af 2.$,
$D$ is the sum of the two coordinate lines $L_1$ and $L_2$ and $\pi$ blows up the origin.
Let $E$ be the exceptional divisor and let $M_1$ and $M_2$ be the strict transform of the
two lines.  By assumption $\Delta=a_1L_1+a_2L_2$ where $a_i=0$, $1/2$ or $1$.  If we write
\[
K_T+a_1M_1+a_2M_2+eE=\pi^*(K_S+a_1L_1+a_2L_2),
\]
then $e=a_1+a_2-1$.  

Globally we have a pair $(T,\Theta)$ such that $\pi_*\Theta=\Delta$.  If the volume of the
pair $(T,\Theta)$ is smaller than the volume of the pair $(S,\Delta)$ then the coefficient
$E$ of $\Theta$ is smaller than $e$.

In particular, since $e\leq 1$, if we increase the coefficient of any $-1$-curve we blow
down $\map S.X.$ to $1$ then the volume is unchanged.  So there is no harm in assuming
that $S=X$.  Note also that if we blow up $\map T.S.$ a point which does not belong to $D$
then $e\leq 0$ so that the volume is unchanged.  Therefore we may also assume that
$\map X.Z.$ only blows up strata of a fibre of $B$, since blow ups away from the strata
don't change the volume.  Since $(Z,B)$ is log smooth over $U$, any sequence of blows up
of the strata of a particular fibre can be realised in the whole family.  By deformation
invariance of log plurigenera we may therefore assume that $U$ is a point, \eqref{l_dcc}.

In general $\vol(Z_u,K_{Z_u}+\Phi)\geq \vol(X,K_X+\Delta)=d$.  Our goal is to find a
higher model $\map Y.Z.$ where we always have equality.  This follows using some results
from \cite{HMX10}, cf.  \eqref{l_point}.  We give an example at the end of \S 1 which
illustrates some of the subtleties behind the statement and proof of \eqref{l_point}.

So we may assume that $\vol(Z_u,K_{Z_u}+\Phi)=d$.  Since $(X,\Delta)$ is log canonical and
$K_X+\Delta$ is ample, we can recover $(X,\Delta)$ from $(Z_u,\Phi)$ as the log canonical
model, cf. \eqref{l_vol}.  Conversely if $u\in U$ is a point such that $(Z_u,\Phi)$ has a
log canonical model, $f\colon\rmap Z_u.X.$ , where
\[
X=\Proj R(Z_u,K_{Z_u}+\Phi) \qquad \text{and} \qquad \Delta=f_*\Phi,
\]
the coefficients of $0\leq \Phi\leq B_u$ belong to $I$ and $\vol(Z_u,K_{Z_u}+\Phi)=d$ then
$(X,\Delta)\in\mathfrak{F}$.

It therefore suffices to prove that the set of fibres with a log canonical model is
constructible.  Note that $(X,\Delta)$ has a log canonical model if and only if the log
canonical section ring
\[
R(X,K_X+\Delta)=\bigoplus_{m\in \mathbb{N}} H^0(X,\ring X.(m(K_X+\Delta)))
\]
is finitely generated.  Conjecturally every fibre has a log canonical model.  Once again
the problem are the components of $\Delta$ with coefficient one.  The main result of
\cite{BCHM10} implies that if there are no components of $\Delta$ with coefficient one,
that is, $(X,\Delta)$ is kawamata log terminal, then the log canonical section ring is
finitely generated.

In general, \eqref{l_equivalent}, the existence of the log canonical model $Z$ is
equivalent to the existence of a good minimal model $f\colon\rmap X.Y.$, that is, a model
$(Y,\Gamma)$ such that $K_Y+\Gamma$ is semi-ample.  In this case the log canonical model
is simply the model $\map Y.Z.$ such that $K_Y+\Gamma$ is the pullback of an ample
divisor.

In fact we prove, \eqref{t_one-smooth}, a much stronger result.  We prove that if one
fibre $(X_0,\Delta_0)$ has a good minimal model then every fibre has a good minimal model.
By \cite[1.1]{HX11} it suffices to prove that every fibre over an open subset has a good
minimal model, equivalently, that the generic fibre has a good minimal model.

Let $\eta\in U$ be the generic point.  We may assume that $U$ is affine.  We prove the
existence of a good minimal model for the pair $(X_{\eta},\Delta_{\eta})$ in two steps.
We first show that $(X_{\eta},\Delta_{\eta})$ has a minimal model.  For this we run the
$(K_X+\Delta)$-MMP with scaling of an ample divisor.  We know that if we run the
$(K_{X_0}+\Delta_0)$-MMP with scaling of an ample divisor then this MMP terminates with a
good minimal model.  Using \cite[2.10]{HX11} and \eqref{l_base} we can reduce to the case
when the diminished stable base locus of $K_{X_0}+\Delta_0$ does not contain any non
canonical centres.  In this case we show, \eqref{l_central}, that every step of the
$(K_X+\Delta)$-MMP induces a $(K_{X_0}+\Delta_0)$-negative map.  This generalises
\cite[4.1]{HMX10}, which assumes that $U$ is a curve and that $(X,\Delta)$ is terminal.
This MMP ends $f\colon\rmap X.Y.$ with a minimal model for the generic fibre,
\eqref{l_generic-minimal}.

To finish off we need to show that the minimal model is a good minimal model.  There are
two cases.  We may write $(X,\Delta=S+B)$, where $S=\rfdown\Delta.$.  

In the first case, if $K_X+(1-\epsilon)S+B$ is not pseudo-effective for any $\epsilon>0$
then we may run $\rmap Y.W.$ the $(K_X+(1-\epsilon)S+B)$-MMP until we reach a Mori fibre
space \eqref{l_generic-fano} $\map W.Z.$.  If $\epsilon>0$ is sufficiently small, this MMP
induces a $(K_{X_0}+\Delta_0)$-non-positive map, see \eqref{l_index}.  It follows that
this MMP is $(K_X+\Delta)$-non-positive.  We know that there is a component $D$ of $S$
whose image dominates the base $Z$ of the Mori fibre space.  By induction the generic
fibre of the image $E$ of $D$ in $Y$ is a good minimal model.  The restriction $\rmap
E.F.$ of the map $\rmap Y.W.$ need not be a birational contraction but we won't lose
semi-ampleness.  The image of the divisor is pulled back from $Z$ and so $K_X+\Delta$ has
a semi-ample model.

In the second case $K_X+(1-\epsilon)S+B$ is pseudo-effective.  As $K_X+(1-\epsilon)S+B$ is
kawamata log terminal, it follows by work of B. Berndtsson and M. P\u aun, \eqref{t_BP},
that the Kodaira dimension is invariant, see \eqref{t_inv}.  As $K_X+(1-\epsilon)S+B$ is
pseudo-effective and $(X_0,\Delta_0)$ has a good minimal model, it follows that
$K_{X_0}+\Delta_0$ is abundant, that is, the Kodaira dimension is the same as the
numerical dimension.  By deformation invariance of log plurigenera the generic fibre is
abundant.  As the restriction of $K_Y+\Gamma$ to every component of coefficient one is
semi-ample, the restriction of $K_Y+\Gamma$ to the sum of the coefficient one part is
semi-ample by \eqref{t_slc} and we are done by \eqref{t_bpf}.

As promised, here is an example to illustrate some of the subtleties of the argument in
the proof of \eqref{l_point}.  We go back to the example of lines in $\pr 2.$.  We start
with four lines $L_1$, $L_2$, $L_3$ and $L_4$ in $\pr 2.$ all with coefficient one.  In
this case $U_0$ is a point since there is no moduli to four lines in linear general
position.  The volume of the pair in $\pr 2.$ is then $1$.  Now suppose that
$(X,\Delta)\in \mathfrak{F}$.  As already pointed out $d\leq 1$ and there is no harm in
assuming that $X$ is a blow up of $\pr 2.$, $f\colon\map X.{\pr 2.}.$.  We may even assume
that all of the blow ups lie over the six points where the four lines intersect.  Fix the
point $p$ where the two lines $L_1$ and $L_2$ meet and assume that all blow ups are over
$p$.  Then $X$ is a toric variety and the morphism $f\colon\map X.{\pr 2.}.$ is a toric
morphism.  Let's simplify matters even more and assume that we only alter one coefficient
of one exceptional divisor $E$ over $p$; let's suppose that we don't include $E$ in
$\Theta$, that is, we make its coefficient zero.  In this case, since every other divisor
occurs with coefficient one, we can compute the volume on the weighted blow up of $\pr 2.$
corresponding to the divisor $E$, $g\colon\map S.{\pr 2.}.$.  The problem is that unless
we fix the degree $d$ there is no constraint on how many times we blow up over $p$, that
is, there is no constraint on the weighted blow up $g$.  Let $M_1$, $M_2$, $M_3$ and $M_4$
be the strict transform of the four lines.  Then $(S,M_1+M_2+M_3+E)$ is a toric pair, so
that $K_S+M_1+M_2+M_3+E \sim 0$.  It follows that
\[
(K_S+M_1+M_2+M_3+M_4)^2=(M_4-E)^2=M_4^2+E^2=1+E^2.
\]
It is a simple exercise in toric geometry to compute $E^2$.  If we make a weighted blow up of 
type $(a,b)$ then 
\[
E^2=-\frac 1{ab}
\]
so that the volume is
\[
\frac{ab-1}{ab}.   
\]
As expected the volume satisfies DCC.  If we fix the volume $d$ then there are only
finitely many possible values for $(a,b)$.  This is the content of \eqref{l_point} in this
example.

\makeatletter
\renewcommand{\thetheorem}{\thesubsection.\arabic{theorem}}
\@addtoreset{theorem}{subsection}
\makeatother

\section{Preliminaries}

\subsection{Notations and Conventions}

We will follow the terminology from \cite{KM98}.  Let $f\colon\rmap X.Y.$ be a proper
birational map of normal quasi-projective varieties and let $p\colon\map W.X.$ and
$q\colon\map W.Y.$ be a common resolution of $f$.  We say that $f$ is a \textit{birational
  contraction} if every $p$-exceptional divisor is $q$-exceptional.  If $D$ is an
$\mathbb{R}$-Cartier divisor on $Y$ then $f^*D$ is the $\mathbb{R}$-Weil divisor
$q_*p^*D$.  Equivalently, if $U$ is the domain of $f$ then $f^*D$ is the $\mathbb{R}$-Weil
divisor on $X$ corresponding to the $\mathbb{R}$-Cartier divisor $(f|_U)^*D$ on $U$.

If $D$ is an $\mathbb{R}$-Cartier divisor on $X$ such that $D':=f_*D$ is
$\mathbb{R}$-Cartier then we say that $f$ is $D$-\textit{non-positive}
(resp. $D$-\textit{negative}) if we have $p^*D=q^*D'+E$ where $E\geq 0$ and $E$ is
$q$-exceptional (respectively $E$ is $q$-exceptional and the support of $E$ contains the
strict transform of the $f$-exceptional divisors).

We say a proper morphism $\pi\colon\map X.U.$ is a \textit{contraction morphism} if
$\pi_*\ring X.=\ring U.$.  Recall that for any $\mathbb{R}$-divisor $D$ on $X$, the sheaf
$\pi_*\ring X.(D)$ is defined to be $\pi_*\ring X.(\rfdown D.)$.  

If $X$ is a normal variety and $B$ is a divisor whose components have coefficient one then
the \textit{strata} of $B$ are the irreducible components of the intersections
\[
B_I=\cap _{j\in I} B_j=B_{i_1}\cap \ldots \cap B_{i_r},
\]
of the components of $B$, where $I=\{\, \llist i.r.\,\}$ is a subset of the indices,
including the empty intersection $X=B_\emptyset$.  If $(X,\Delta)$ is a log pair then the
\textit{strata} of $(X,\Delta)$ are the strata of the support $B$ of $\Delta$.

If we are given a morphism $\map X.U.$, then we say that $(X,\Delta)$ is \textit{log
  smooth over $U$} if $(X,\Delta)$ has simple normal crossings and both $X$ and every
stratum of $(X,D)$ is smooth over $U$, where $D$ is the support of $\Delta$.  If
$\pi\colon\map X.U.$ and $\map Y.U.$ are projective morphisms, $f\colon\rmap X.Y.$ is a
birational contraction over $U$ and $(X,\Delta)$ is a log canonical pair (respectively
divisorially log terminal $\mathbb{Q}$-factorial pair) such that $f$ is
$(K_X+\Delta)$-non-positive (respectively $(K_X+\Delta)$-negative) and $K_Y+\Gamma$ is nef
over $U$ (respectively and $Y$ is $\mathbb{Q}$-factorial), then we say that
$f\colon\rmap X.Y.$ is a \textit{weak log canonical model} (respectively \textit{a minimal
  model}) of $K_X+\Delta$ over $U$.

We say $K_Y+\Gamma$ is \textit{semi-ample} over $U$ if there exists a contraction morphism
$\psi\colon\map Y.Z.$ over $U$ such that $K_Y+\Gamma\sim_{\mathbb{R}}\psi ^*A$ for some
$\mathbb{R}$-divisor $A$ on $Z$ which is ample over $U$.  Equivalently, when $K_Y+\Gamma$
is $\mathbb{Q}$-Cartier, $K_Y+\Gamma$ is semi-ample over $U$ if there exists an integer
$m>0$ such that $\ring Y.(m(K_Y+\Gamma))$ is generated over $U$.  Note that in this case
\[
R(Y/U,K_Y+\Gamma):=\bigoplus_{m\geq 0}\pi_*\ring Y.(m(K_Y+\Gamma))
\]
is a finitely generated $\ring U.$-algebra, and 
\[
Z=\Proj R(Y/U,K_Y+\Gamma).
\]  
If $K_Y+\Gamma$ is semi-ample and big over $U$, then $Z$ is the \textit{log canonical
  model} of $(X,\Delta)$ over $U$.  A weak log canonical model $f\colon\rmap X.Y.$ is
called a \textit{semi-ample model} if $K_Y+\Gamma$ is semi-ample.

Suppose that $\pi\colon\map X.U.$ is a projective morphism of normal varieties.  Let $D$
be an $\mathbb{R}$-Cartier divisor on $X$.  Let $C$ be a prime divisor.  If $D$ is big
over $U$ then
\[
\sigma_C(X/U,D)=\inf \{\, \mult_C(D') \,|\, D'\sim_{\mathbb{R},U} D, D'\geq 0\,\}.
\]
Now let $A$ be any ample $\mathbb{Q}$-divisor over $U$ and suppose that $D$ is
pseudo-effective over $U$.  Following \cite{Nakayama04}, let
\[
\sigma_C(X/U,D)=\lim_{\epsilon\to 0} \sigma_C(X/U,D+\epsilon A).
\]
Then $\sigma_C(X/U,D)$ exists (where we allow $\infty$ as a limit) and is independent of
the choice of $A$.  There are only finitely many prime divisors $C$ such that
$\sigma_C(X/U,D)>0$, this number only depends on the numerical equivalence class of $D$
over $U$ and if we replace $U$ by an open subset which contains the image of the generic
point of $C$ then $\sigma_C$ is unchanged.  However with no more assumptions there are
examples when $\sigma_C(X/U,D)=\infty$, \cite{Lesieutre15}.  On the other hand if $\pi(C)$
has codimension no more than one then $\sigma_C(X/U,D)<\infty$.  In this case the
$\mathbb{R}$-divisor $N_{\sigma}(X/U,D)=\sum _C\sigma_C(D)C$ is determined by the
numerical equivalence class of $D$, cf. \cite[3.3.1]{BCHM10} and \cite{Nakayama04} for
more details.  Note that if the fibres of $\pi$ are irreducible and all of the same
dimension then $\pi(C)$ automatically has codimension at most one for every prime divisor
$C$ on $X$.

Now suppose that $D$ is only an $\mathbb{R}$-divisor.  The \textit{real linear system}
associated to $D$ over $U$ is
\[
|D/U|_{\mathbb{R}}=\{\, C\geq 0\,|\, C\sim_{\mathbb{R},U} D \,\}.
\]
The \textit{stable base locus} of $D$ over $U$ is the Zariski closed set
$\mathbf{B}(X/U,D)$ given by the intersection of the support of the elements of the real
linear system $|D/U|_{\mathbb{R}}$.  If $|D/U|_{\mathbb{R}}=\emptyset$, then we let
$\mathbf{B}(X/U,D)=X$.

The \textit{diminished stable base locus of $D$ over $U$} is
\[
\mathbf{B}_-(X/U,D)=\bigcup_{A} \mathbf{B}(X/U,D+A),
\]
where the union runs over all divisors $A$ which are ample over $U$.

Suppose that $U$ is a point.  Following \cite{Nakayama04} if $D$ is pseudo-effective we
define \textit{the numerical dimension}
\[
\kappa_{\sigma}(X,D)=\max_{H\in {\Pic}(X)}\{\, k\in \mathbb{N} \,|\, \limsup_{m\to \infty} \frac{h^0(X,\ring X.(mD+H))}{m^k} >0\,\}.
\]
If $D$ is nef then this is the same as 
\[
\nu(X,D)=\max \{\, k\in \mathbb{N} \,|\, H^{n-k}\cdot D^k>0 \,\}
\]
for any ample divisor $H$, see \cite{Nakayama04}.  If $D$ is $\mathbb{Q}$-Cartier then $D$
is called \textit{abundant} if $\kappa_{\sigma}(X,D)=\kappa(X,D)$, that is, the numerical
dimension is equal to the Iitaka dimension.  If we drop the condition that $X$ is
projective and instead we have a projective morphism $\pi\colon\map X.U.$, then a
$\mathbb{Q}$-Cartier divisor $D$ on $X$, is called \textit{abundant} over $U$ if its
restriction to the generic fibre is abundant.

If $(X,\Delta)$ is a log pair then a \textit{non canonical centre} is the centre of a
valuation of log discrepancy less than one.

We say a family of log pairs $\mathfrak{D}$ is \textit{bounded} if there is a morphism of
varieties $\map Z.U.$, where $U$ is smooth, $Z$ is flat over $U$, and a log pair
$(Z,\Sigma)$, where the support of $\Sigma$ contains neither a component of a fibre nor a
codimension one singular point of any fibre, such that for every
$(X,\Delta)\in \mathfrak{D}$ there is a closed point $u\in U$ and an isomorphism of log
pairs between $(X,\Delta)$ and $(Z_u,\Sigma_u)$.  In particular the coefficients of
$\Delta$ belong to a finite set.

\subsection{The volume}

\begin{definition}\label{d_volume} Let $X$ be a normal $n$-dimensional irreducible
projective variety and let $D$ be an $\mathbb{R}$-divisor.  The \textbf{volume} of $D$ is
\[
\vol(X,D) = \limsup_{m\to \infty}\frac{n! h^0(X,\ring X.(mD))}{m^n}.
\]
Let $V\subset X$ be a normal irreducible subvariety of dimension $d$.  Suppose that $D$ is
$\mathbb{R}$-Cartier whose support does not contain $V$.  The \textbf{restricted volume}
of $D$ along $V$ is
\[
\vol(X|V,D)=\limsup_{m\to \infty}\frac{d! (\dim \im (\map {H^0(X,\ring X.(mD))}.{H^0(V,\ring V.(mD|_V))}.))}{m^d}.
\]
\end{definition}

\begin{lemma}\label{l_vol} Let $f\colon\map X.Z.$ be a birational morphism between 
log canonical pairs $(X,\Delta)$ and $(Z,B)$.  Suppose that $K_X+\Delta$ is big and that
$(X,\Delta)$ has a log canonical model $g\colon\rmap X.Y.$.

If $f_*\Delta\leq B$ and $\vol(X,K_X+\Delta)=\vol(Z,K_Z+B)$ then the induced birational 
map $\rmap Z.Y.$ is the log canonical model of $(Z,B)$.  
\end{lemma}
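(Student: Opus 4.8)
The plan is to transfer the problem to $X$ via $f$, and then exploit that $(X,\Delta)$ already has a log canonical model, using the volume hypothesis to control the comparison. First I would pull back the log canonical class: set $K_X+\Gamma:=f^*(K_Z+B)$, so that $(X,\Gamma)$ is again log canonical, $f_*\Gamma=B$, and, since $f$ is a birational morphism with $f_*\ring X.=\ring Z.$, the projection formula identifies $R(Z,K_Z+B)$ with $R(X,K_X+\Gamma)$ and gives $\vol(Z,K_Z+B)=\vol(X,K_X+\Gamma)$; by hypothesis this common value equals $\vol(X,K_X+\Delta)$. Thus it suffices to prove that $g\colon\rmap X.Y.$ is also the log canonical model of $(X,\Gamma)$, with $g_*\Gamma=g_*\Delta=:\Delta_Y$, since then the induced birational map $\rmap Z.Y.$ — which is $g$ precomposed with $f^{-1}$ — realises $Y$ as $\Proj R(Z,K_Z+B)$.

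Next I would compare $\Delta$ and $\Gamma$. On any prime divisor not contracted by $f$ the inequality $f_*\Delta\le B$ forces $\mult_D\Delta\le\mult_D\Gamma$; so $\Delta$ and $\Gamma$ differ only on the $f$-exceptional locus, where $\Delta\le\Gamma$ may fail but the positive part $(\Delta-\Gamma)_+$ is still $f$-exceptional. Since modifying a divisor by an $f$-exceptional divisor does not change $f_*$ of its associated sheaf of sections, writing $m(K_X+\Delta)=f^*(m(K_Z+B))+m(\Delta-\Gamma)_+-m(\Delta-\Gamma)_-$ yields a natural inclusion of graded rings $R(X,K_X+\Delta)\hookrightarrow R(X,K_X+\Gamma)=R(Z,K_Z+B)$. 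Here the left-hand ring is finitely generated with $\Proj$ equal to $Y$ and polarisation $\sim_{\mathbb{Q}}K_Y+\Delta_Y$ ample, and both rings have volume $d$.

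The crux is to upgrade this inclusion to an equality up to a common truncation, and this is where the volume hypothesis is used. Because $(X,\Delta)$ has a log canonical model, $K_X+\Delta$ has a good Nakayama--Zariski decomposition: on a common resolution $p\colon\map W.X.$, $q\colon\map W.Y.$ one has $p^*(K_X+\Delta)=q^*(K_Y+\Delta_Y)+E$ with $E=N_\sigma(W,p^*(K_X+\Delta))$ $q$-exceptional and $q^*(K_Y+\Delta_Y)$ semi-ample and big. Writing $p^*(K_X+\Gamma)=q^*(K_Y+\Delta_Y)+E+p^*(\Gamma-\Delta)$ and invoking $\vol(X,K_X+\Gamma)=(K_Y+\Delta_Y)^{\dim X}$, the equality of volumes should force the positive part of $K_X+\Gamma$ to be $q^*(K_Y+\Delta_Y)$ as well: both the horizontal excess $f^*(B-f_*\Delta)$ and the discrepancy correction along the $f$-exceptional divisors get absorbed into $N_\sigma(W,p^*(K_X+\Gamma))$. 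Equivalently, $B-f_*\Delta$ lies in the stable base locus of $K_Z+B$, every $f$-exceptional divisor is contracted by $g$, and $g_*\Gamma=\Delta_Y$; hence $R(Z,K_Z+B)$ and $R(X,K_X+\Delta)$ share a common truncation, $\Proj R(Z,K_Z+B)=Y$, and $K_Y+\Delta_Y$ is the ample polarisation.

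Finally I would assemble the conclusion: $h:=g\circ f^{-1}\colon\rmap Z.Y.$ is a birational contraction, being the composite of $f^{-1}$ (a birational contraction since $f$ is a morphism) and $g$ (a birational contraction since it is a log canonical model), the previous step guaranteeing that $h$ contracts no divisor it should not; pulling back the relation of the previous paragraph along a common resolution shows $h$ is $(K_Z+B)$-non-positive, while $K_Y+h_*B=K_Y+\Delta_Y$ is ample. Hence $h$ is a weak log canonical model of $(Z,B)$ with ample target, i.e.\ the log canonical model. The main obstacle is precisely the penultimate step: an inclusion of section rings with equal volumes need not be an equality in each degree, and ruling this out — together with checking that the $f$-exceptional divisors and the excess $B-f_*\Delta$ all land in the diminished base locus, so that $h$ is genuinely a birational contraction — is the one place where a real Nakayama-type argument using the existence of the log canonical model of $(X,\Delta)$ is needed, rather than formal bookkeeping.
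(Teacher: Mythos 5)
Your reduction and your diagnosis of where the difficulty lies are both correct, but the proof is not complete: the one step you defer --- that $\vol(X,K_X+\Gamma)=\vol(X,K_X+\Delta)$ forces the excess (both $B-f_*\Delta$ and the $f$-exceptional divisors) to be absorbed into the negative part, so that every $f$-exceptional divisor is contracted by $g$ and the two section rings agree up to truncation --- is precisely the mathematical content of the lemma, and ``the equality of volumes should force'' is not an argument. It is not a formal consequence of $\vol(P+N)=\vol(P)$, for $P$ semi-ample and big and $N\geq 0$, that $N_\sigma(P+N)=N$; one needs genuine input on how the volume grows in effective directions. The paper supplies it as follows: after reducing (much as you do) to the case $Z=X$ with $\Delta\leq D$, it sets $A=g_*(K_X+\Delta)$, $H=g^*A$, $L=D-\Delta$, picks a component $S$ of $L$ with coefficient $a$, and observes that $v(t)=\vol(X,H+tS)$ is constant on $[0,a]$; then the differentiability theorem for the volume function \cite[4.25 (iii)]{LM09} gives
\[
\tfrac 1n v'(0)=\vol_{X|S}(H)\geq S\cdot H^{n-1}=g_*S\cdot A^{n-1},
\]
whence $g_*S=0$ because $A$ is ample. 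That use of restricted volumes is the missing ingredient; without it (or an equivalent input such as Fujita approximation) your penultimate paragraph does not close.

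A secondary slip: you justify that $h=g\circ f^{-1}$ is a birational contraction by saying $f^{-1}$ is one ``since $f$ is a morphism''. That is backwards: $f^{-1}$ extracts every $f$-exceptional divisor, so it is a birational contraction only when $f$ is small. The composite $Z\dashrightarrow Y$ is a birational contraction exactly because every $f$-exceptional divisor is contracted by $g$ --- which is again part of the unproven crux, not something you get for free from $f$ being a morphism.
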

\begin{proof} Let $\pi\colon\map W.X.$ be a log resolution of $(X,C+F)$, which also
resolves the map $g$, where $C$ is the strict transform of $B$ and $F$ is the sum of the
$f$-exceptional divisors.  We may write
\[
K_W+\Theta=\pi^*(K_X+\Delta)+E,
\]
where $\Theta\geq 0$ and $E\geq 0$ have no common components, $\pi_*\Theta=\Delta$ and
$\pi_*E=0$.  Then the log canonical model of $(W,\Theta)$ is the same as the log canonical
model of $(X,\Delta)$.  Replacing $(X,\Delta)$ by $(W,\Theta)$ we may assume that
$(X,C+F)$ is log smooth and $g\colon\map X.Y.$ is a morphism.  Replacing $(Z,B)$ by the
pair $(X,D=C+F)$, we may assume $Z=X$.  

If $A=g_*(K_X+\Delta)$ and $H=g^*A$ then $A$ is ample and $K_X+\Delta-H\geq 0$.  Let
$L=D-\Delta\geq 0$, let $S$ be a component of $L$ with coefficient $a$ and let
\[
v(t)=\vol(X,H+tS).
\]
Then $v(t)$ is a non-decreasing function of $t$ and 
\begin{align*} 
v(0)&=\vol(X,H)\\
    &=\vol(X,K_X+\Delta)\\
    &=\vol(X,K_X+D)\\
    &\geq \vol(X,H+L)\\
    &\geq \vol(X,H+aS)\\
    &=v(a).
\end{align*} 
Thus $v(t)$ is constant over the range $[0,a]$.  \cite[4.25 (iii)]{LM09} implies that
\[
\frac 1n \deriv v.t.\bigg|_{t=0}=\vol_{X|S}(H)\geq S\cdot H^{n-1}=g_*S\cdot A^{n-1}
\]
so that $g_*S=0$.  But then every component of $L$ is exceptional for $g$ and $g$ is the
log canonical model of $(X,D)$.
\end{proof}

\subsection{Deformation Invariance}

\begin{lemma}\label{l_lift} Let $\pi\colon\map X.U.$ be a projective morphism to a smooth
variety $U$ and let $(X,\Delta)$ be a log smooth pair over $U$.  Let $A$ be a relatively
ample Cartier divisor such that $\rfdown \Delta.+A \sim A'$ where $(X,\Delta+A')$ is log
smooth over $U$.

If the coefficients of $\Delta$ belong to $[0,1]$ then 
\[
\map {f_*\ring X.(m(K_X+\Delta)+A)}.{H^0(X_u,\ring X_u.(m(K_{X_u}+\Delta_u)+A_u))}.
\] 
is surjective for all positive integers $m$ such that $m\Delta$ is integral and for every
$u\in U$.
\end{lemma}
\begin{proof} We have 
\[
m(K_X+\Delta)+A \sim m(K_X+\Delta-\frac 1m\rfdown \Delta.+\frac 1m A')=m(K_X+\Delta'),
\]
where $(X,\Delta')$ is log smooth over $U$, $\rfdown\Delta'.=0$ and $\Delta'$ is big over
$U$ so that we may apply \cite[1.8.1]{HMX10}.
\end{proof}

\begin{lemma}\label{l_def} Let $\pi\colon\map X.U.$ be a projective morphism to a smooth
variety $U$ and let $(X,\Delta)$ be a log smooth pair over $U$.  Assume that $K_X+\Delta$
is pseudo-effective over $U$.

If the coefficients of $\Delta$ belong to $[0,1]$ then 
\[
N_{\sigma}(X/U,K_X+\Delta)|_{X_u}=N_{\sigma}(X_u,K_{X_u}+\Delta_u)
\]
for every $u\in U$.  
\end{lemma}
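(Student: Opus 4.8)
The plan is to establish the two inequalities $N_\sigma(X,K_X+\Delta)|_{X_u}\geq N_\sigma(X_u,K_{X_u}+\Delta_u)$ and $\leq$ separately, using the fact that $N_\sigma$ is computed as a limit over $N_\sigma(\,\cdot\,+\epsilon A)$ for an ample $A$ and that for a big divisor $\sigma_C(D)=\inf\{\mult_C(D')\mid D'\sim_{\mathbb R}D,\ D'\geq 0\}$. First I would reduce to a statement about effective divisors: fix an ample $\mathbb Q$-divisor $A$ on $X$, work with $D_\epsilon=K_X+\Delta+\epsilon A$ (which is big for $\epsilon>0$ after possibly shrinking $U$ so that $A|_{X_u}$ stays ample), and show $N_\sigma(X,D_\epsilon)|_{X_u}=N_\sigma(X_u,D_\epsilon|_{X_u})$; the lemma then follows by letting $\epsilon\to 0$, since the restriction of the limiting $\mathbb R$-divisor $\sum_C\sigma_C(D)C$ to $X_u$ is compatible with this limit (only finitely many components are involved and their coefficients converge).

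For the inequality $\geq$, the point is that an effective $\mathbb R$-divisor $D'\sim_{\mathbb R}D_\epsilon$ on $X$ restricts to an effective $\mathbb R$-divisor $D'|_{X_u}\sim_{\mathbb R}D_\epsilon|_{X_u}$ on $X_u$ (using that $X_u$ is not contained in the support of $D'$, which holds for $u$ in a dense open set, and then in fact for all $u$ since $X$ is log smooth over $U$ so every component of $D'$ arising from a pluricanonical section meets the fibre properly), and $\mult_C(D'|_{X_u})\leq \mult_{\tilde C}(D')$ where $\tilde C$ is the component of $X$ restricting to $C$. Taking the infimum gives $\sigma_C(D_\epsilon|_{X_u})\leq \sigma_{\tilde C}(D_\epsilon)=\sigma_{\tilde C}(D_\epsilon)$, hence $N_\sigma(X_u,D_\epsilon|_{X_u})\leq N_\sigma(X,D_\epsilon)|_{X_u}$. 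This direction is the easy one and is essentially formal once one knows the pluricanonical sections restrict well.

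The inequality $\leq$ is the main obstacle and is where deformation invariance of plurigenera enters. One needs to produce, from an effective divisor on the central (or generic) fibre computing $\sigma_C$ there, an effective divisor on the total space with controlled multiplicity along $\tilde C$. The natural tool is the surjectivity of the restriction map $\pi_*\ring X.(m(K_X+\Delta))\to H^0(X_u,\ring X_u.(m(K_{X_u}+\Delta_u)))$ — that is, deformation invariance of log plurigenera for log smooth families with boundary coefficients in $[0,1]$, which in the klt case is \cite{BCHM10} and in this generality follows from the work of Berndtsson–P\u aun quoted in the excerpt (\eqref{t_BP}, \eqref{t_inv}) applied after perturbing the coefficient-one part, together with the twisting trick to incorporate the ample perturbation $\epsilon A$. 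Granting this, a section $s_u\in H^0(X_u, m(K_{X_u}+\Delta_u))$ with $\mathrm{div}(s_u)$ nearly computing $m\,\sigma_C$ lifts to $s\in\pi_*\ring X.(m(K_X+\Delta))$; the divisor $\tfrac1m\mathrm{div}(s)$ is $\sim_{\mathbb R}K_X+\Delta$, effective, and its multiplicity along $\tilde C$ is at most that of $\tfrac1m\mathrm{div}(s_u)$ along $C$ up to $O(1/m)$, because $\mathrm{div}(s)|_{X_u}=\mathrm{div}(s_u)$. Letting $m\to\infty$ and then $\epsilon\to 0$ yields $\sigma_{\tilde C}(K_X+\Delta)\leq\sigma_C(K_{X_u}+\Delta_u)$ for each component, hence the reverse inequality, and combining the two finishes the proof. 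The only subtlety to check carefully is that the finitely many prime divisors $C$ with $\sigma_C>0$ on the fibre all arise as restrictions of divisors on $X$ (true since $X\to U$ is log smooth, so horizontal prime divisors on $X_u$ extend) and that no new contributions to $N_\sigma$ on $X$ are horizontal over $U$ but vanish on $X_u$ — which is exactly what the $\geq$ direction rules out.
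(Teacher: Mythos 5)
Your proposal follows essentially the same route as the paper's proof: everything reduces to the surjectivity of the restriction map
\[
\pi_*\ring X.(m(K_X+\Delta)+A)\longrightarrow H^0(X_u,\ring X_u.(m(K_{X_u}+\Delta_u)+A_u))
\]
for a relatively ample twist $A$, which yields $N_{\sigma}(X,K_X+\Delta)|_{X_u}\leq N_{\sigma}(X_u,K_{X_u}+\Delta_u)$ exactly as in your second step, the reverse inequality being regarded as formal. The only real difference is that the paper quotes this surjectivity directly from \cite[1.8.1]{HMX10}, which is already stated for log smooth families with boundary coefficients in $[0,1]$ and an ample twist, so no perturbation of the coefficient-one part or appeal to \eqref{t_BP} is needed; your reconstruction of that input from Berndtsson--P\u aun is workable but is doing over again what the cited result already provides.

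One concrete slip to fix in your ``easy'' direction: for an effective $D'\sim_{\mathbb R}D$ on $X$ not containing $X_u$, the correct comparison is $\mult_C(D'|_{X_u})\geq\mult_{\tilde C}(D')$, not $\leq$, since components of $D'$ other than $\tilde C$ (including vertical ones) may restrict to divisors on $X_u$ containing $C$. With the inequality in this direction, ``taking the infimum'' no longer gives $\sigma_C(D|_{X_u})\leq\sigma_{\tilde C}(D)$ as you claim. The statement you want is still true and standard (the paper calls it clear), but it needs an extra word: either choose $D'$ to nearly compute $\sigma_E$ for all relevant prime divisors $E$ simultaneously and control the residual contributions, or compare fixed divisors of $|mD+A|$ restricted to $X_u$ with those of $|mD_u+A_u|$. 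As written, that step of your argument does not go through.
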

\begin{proof} Since this result is local about every point $u\in U$ we may assume that $U$
is affine.  Pick a relatively ample Cartier divisor $A$ such that
$\rfdown \Delta.+A \sim A'$ where $(X,\Delta+A')$ is log smooth over $U$.  Fix $u\in U$.  Then
\eqref{l_lift} implies that
\[
\map {f_*\ring X.(m(K_X+\Delta)+A)}.{H^0(X_u,\ring X_u.(m(K_{X_u}+\Delta_u)+A_u))}.
\] 
is surjective for all positive integers $m$ such that $m\Delta$ is integral.  It follows
that
\[
N_{\sigma}(X/U,K_X+\Delta)|_{X_u}\leq N_{\sigma}(X_u,K_{X_u}+\Delta_u)
\]
and the reverse inequality is clear.
\end{proof}

\begin{lemma}\label{l_commute} Let $\pi\colon\map X.U.$ be a projective morphism to a smooth
variety $U$ and let $(X,\Delta)$ be a log smooth pair over $U$ such that the strata of
$\Delta$ have irreducible fibers over $U$ and $K_X+\Delta$ is pseudo-effective over $U$.
Let $0\in U$ be a closed point, let
\[
\Theta_0=\Delta_0-\Delta_0\wedge N_{\sigma}(X_0,K_{X_0}+\Delta_0)
\]
and let $0\leq \Theta\leq\Delta$ be the unique divisor so that $\Theta_0=\Theta|_{X_0}$.  

If the coefficients of $\Delta$ belong to $[0,1]$ then 
\[
\Theta=\Delta-\Delta\wedge N_{\sigma}(X/U,K_X+\Delta).
\]
\end{lemma}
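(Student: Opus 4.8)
The plan is to verify the claimed identity after restricting everything to $X_0$ and then conclude by the uniqueness of $\Theta$. Set $N:=N_{\sigma}(X,K_X+\Delta)$ and $N_0:=N_{\sigma}(X_0,K_{X_0}+\Delta_0)$, and put $\Phi:=\Delta-\Delta\wedge N$. Since $0\le\Delta\wedge N\le\Delta$ we have $0\le\Phi\le\Delta$, and by the uniqueness of $\Theta$ it suffices to prove $\Phi|_{X_0}=\Theta_0$. Because $\Delta|_{X_0}=\Delta_0$ and, by \eqref{l_def}, $N|_{X_0}=N_0$, this is exactly the statement that restriction to $X_0$ commutes with cutting $\Delta$ down by $N$:
\[
(\Delta\wedge N)|_{X_0}=\Delta_0\wedge\bigl(N|_{X_0}\bigr).
\]

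I would prove this componentwise. Since $(X,\Delta)$ is log smooth over $U$, every component $D$ of $\Delta$ is smooth over $U$, so $D_0:=D|_{X_0}$ is reduced, and for distinct components $D\neq D'$ the stratum $D\cap D'$ is smooth, hence flat, over $U$, so its fibre over $0$ has codimension $\ge 2$ in $X_0$; thus the divisors $D_0$ have pairwise no common component and $\Supp\Delta_0$ is their disjoint union. It therefore suffices to fix a component $D$ of $\Delta$, of coefficient $a$, and a prime component $P$ of $D_0$, and to compare the coefficient of $P$ on the two sides. On the left it is $\min\bigl(a,\mult_D(N)\bigr)$; on the right it is $\min\bigl(a,\mult_P(N|_{X_0})\bigr)$, since $\mult_P\Delta_0=a$ (as $D_0$ is reduced and $P$ lies on no other component of $\Delta_0$). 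Restricting $N$ term by term gives $\mult_P(N|_{X_0})\ge\mult_D(N)$, so the inequality $(\Delta\wedge N)|_{X_0}\le\Delta_0\wedge(N|_{X_0})$ is automatic (it also follows from $\Delta\wedge N\le\Delta$ and $\Delta\wedge N\le N$). So the content is the reverse inequality, i.e. $\min\bigl(a,\mult_P(N|_{X_0})\bigr)=\min\bigl(a,\mult_D(N)\bigr)$; when $\mult_D(N)\ge a$ both sides equal $a$, so the only case to treat is $\mult_D(N)<a$, where one must show $\mult_P(N|_{X_0})=\mult_D(N)$.

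To settle this I would use $\mult_D(N)<a$ to produce, for each small $\epsilon>0$ and a fixed ample (relatively ample over $U$) divisor $A$, an effective $\mathbb{R}$-divisor $D'\sim_{\mathbb{R}}K_X+\Delta+\epsilon A$ with $\mult_D(D')$ close to $\mult_D(N)$, in particular $<a$; the log smooth structure over $U$ should let one take the remaining components of $D'$ in general enough position over $U$ that none of them restricts to a divisor through $P$. Restriction to $X_0$ then gives $\mult_P(N_0)\le\mult_P(D'|_{X_0})=\mult_D(D')$, and letting $\epsilon\to 0$ yields $\mult_P(N_0)\le\mult_D(N)$; together with $\mult_P(N|_{X_0})=\mult_P(N_0)\ge\mult_D(N)$ from \eqref{l_def}, this gives the equality. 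I expect this last step — controlling the restriction of $N_{\sigma}(X,K_X+\Delta)$ along a component of $\Delta_0$, in particular ruling out extra contributions from components of $N$ not supported on $\Delta$ — to be the main obstacle; the remainder is routine bookkeeping plus the single essential input \eqref{l_def}.
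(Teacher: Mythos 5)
Your reduction to the componentwise identity $(\Delta\wedge N)|_{X_0}=\Delta_0\wedge N_0$ and your handling of the easy inequality via \eqref{l_def} are fine, but the step you yourself flag as the main obstacle is a genuine gap, and it is exactly where all of the content of the lemma sits. When $\mult_D(N)<a=\mult_D(\Delta)$ you need $\mult_P(N_0)\le\mult_D(N)$, equivalently that no component $C\neq D$ of $N=N_\sigma(X,K_X+\Delta)$ satisfies $C|_{X_0}\supseteq P$. Your proposed mechanism --- choosing $D'\sim_{\mathbb R}K_X+\Delta+\epsilon A$ with the residual components in ``general enough position over $U$'' --- cannot deliver this: any prime divisor $C$ with $\sigma_C(K_X+\Delta+\epsilon A)>0$ appears in \emph{every} effective member of that class, so it belongs to the fixed part and cannot be moved by a Bertini-type argument, and log smoothness of $(X,\Delta)$ over $U$ says nothing about such $C$, which need not be a stratum of $\Delta$. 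Ruling out such a $C$ through $P$ is logically equivalent to the inequality you are trying to prove, so the argument is circular at precisely this point. Note also that the divisor-level equality $N|_{X_0}=N_0$ from \eqref{l_def} is too coarse to help: it only gives $\mult_P(N_0)=\sum_C\sigma_C(N)\,\mult_P(C|_{X_0})$, which is perfectly consistent with extra contributions from components not supported on $\Delta$.

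The paper avoids this issue entirely by arguing with section spaces rather than with $N_\sigma$ componentwise. It compares the inclusion $\pi_*\mathcal{O}_X(m(K_X+\Theta)+H)\subset\pi_*\mathcal{O}_X(m(K_X+\Delta)+H)$ with the two restriction maps to $X_0$: the bottom restriction is an isomorphism by the definition of $\Theta_0$, the left-hand restriction is surjective by the extension theorem \cite[1.8.1]{HMX10}, and Nakayama's lemma then forces the inclusion of pushforwards to be an equality in a neighbourhood of $0$. Letting $m\to\infty$ this yields $\sigma_D(K_X+\Delta)\ge\mult_D(\Delta-\Theta)$ for every component $D$ of $\Delta$, i.e.\ $\Theta\ge\Delta-\Delta\wedge N$, with no need to locate which components of $N$ meet $X_0$ along $P$. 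The essential input is this section-level extension statement, of which \eqref{l_def} records only a weak consequence; to close your gap you would have to invoke \cite[1.8.1]{HMX10} (or an equivalent invariance-of-plurigenera statement) directly rather than through $N_\sigma$.
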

\begin{proof} Replacing $U$ be an open neighbourhood of $0\in U$ we may assume that $U$ is
affine.  Pick a relatively ample Cartier divisor $H$ with the property that for every
integral divisor $0\leq S\leq \rfdown\Delta.$ we may find $S+H \sim H'$ such that
$(X,\Delta+H')$ is log smooth over $U$.  Given a positive integer $m$, let
\[
\Phi_0=\Delta_0-\Delta_0\wedge N_{\sigma}(X_0,K_{X_0}+\Delta_0+\frac 1m H_0)
\]
and let $0\leq \Phi\leq\Delta$ be the unique divisor so that $\Phi_0=\Phi|_{X_0}$.
Consider the commutative diagram
\[
\begin{diagram}
\pi_*\ring X.(m(K_X+\Phi)+H)   &   \rTo    & \pi_*\ring X.(m(K_X+\Delta)+H)  \\
\dTo &  & \dTo \\
H^0(X_0,\ring X_0.(m(K_{X_0}+\Phi_0)+H_0))   &   \rTo    & H^0(X_0,\ring {X_0}.(m(K_{X_0}+\Delta_0)+H_0). 
\end{diagram}
\] 
The top row is an inclusion and the bottom row is an isomorphism by assumption.  The first
column is surjective by \eqref{l_lift}.  Nakayama's Lemma implies that the top row is an
isomorphism in a neighbourhood of $X_0$.  It follows that
\[
\Phi\geq \Delta-\Delta\wedge N_{\sigma}(X/U,K_X+\Delta+\frac 1mH).
\]
Taking the limit as $m$ goes to infinity we get 
\[
\Theta\geq \Delta-\Delta\wedge N_{\sigma}(X/U,K_X+\Delta)
\]
and the reverse inequality follows by \eqref{l_def}.  
\end{proof}

\begin{lemma}\label{l_one} Let $\pi\colon\map X.U.$ be a projective morphism to a smooth 
variety $U$ and let $(X,D)$ be log smooth over $U$, where the coefficients of $D$ are all
one.  Let $0\in U$ be a closed point.

Then the restriction morphism
\[
\map {\pi_*\ring X.(K_X+D)}.{H^0(X_0,\ring X_0.(K_{X_0}+D_0))}.
\]
is surjective.  
\end{lemma}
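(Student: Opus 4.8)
\emph{The plan} is to rewrite $\mathcal O_X(K_X+D)$ as the sheaf of top logarithmic differentials of the pair $(X,D)$ and then invoke the fact that the Hodge sheaves of a smooth projective family are locally free and compatible with base change.

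First, the routine reductions. We may assume $U$ is affine, so that $H^0(X,\mathcal O_X(K_X+D))=\Gamma(U,\pi_*\mathcal O_X(K_X+D))$ surjects onto $\pi_*\mathcal O_X(K_X+D)\otimes k(0)$; hence it is enough to show that the base change map $\pi_*\mathcal O_X(K_X+D)\otimes k(0)\to H^0(X_0,\mathcal O_{X_0}(K_{X_0}+D_0))$ is surjective. Since $(X,D)$ is log smooth over $U$, the morphism $\pi$ is smooth, $D$ is a relatively simple normal crossings divisor (in particular horizontal over $U$), and $D_0=D|_{X_0}$ is a reduced simple normal crossings divisor on the smooth variety $X_0$; as the normal bundle of the fibre $X_0$ in $X$ is trivial, adjunction gives $\mathcal O_X(K_X+D)|_{X_0}\cong\mathcal O_{X_0}(K_{X_0}+D_0)$, so the map in question is the natural restriction map.

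Next, the translation. Because $\pi$ is smooth and $D$ is relatively simple normal crossings, there is an exact sequence $0\to\pi^*\Omega^1_U\to\Omega^1_X(\log D)\to\Omega^1_{X/U}(\log D)\to 0$ of locally free sheaves, and so, taking top exterior powers with $n=\dim X$ and $e=\dim U$, one gets $\mathcal O_X(K_X+D)=\Omega^n_X(\log D)\cong\pi^*\omega_U\otimes\Omega^{n-e}_{X/U}(\log D)$. Hence $\pi_*\mathcal O_X(K_X+D)\cong\omega_U\otimes\pi_*\Omega^{n-e}_{X/U}(\log D)$, and, after restricting to the fibre over $0$, the lemma becomes equivalent to the surjectivity of $\pi_*\Omega^{n-e}_{X/U}(\log D)\otimes k(0)\to H^0(X_0,\Omega^{n-e}_{X_0}(\log D_0))$.

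This last point is the heart of the matter, and the step I expect to be the real obstacle. For the smooth projective family $\pi\colon X\to U$ with the relatively simple normal crossings divisor $D$, every sheaf $R^q\pi_*\Omega^p_{X/U}(\log D)$ is locally free and its formation commutes with base change; this is the $E_1$-degeneration of the relative logarithmic Hodge--de Rham spectral sequence (Deligne, Steenbrink, Katz--Oda; Esnault--Viehweg in the projective case), equivalently the statement that the Hodge numbers $h^q(X_u,\Omega^p_{X_u}(\log D_u))$ of the smooth quasi-projective fibres are locally constant on $U$. Applied with $p=n-e$, $q=0$, this gives at once that $\pi_*\Omega^{n-e}_{X/U}(\log D)$ is locally free and that $\pi_*\Omega^{n-e}_{X/U}(\log D)\otimes k(0)\to H^0(X_0,\Omega^{n-e}_{X_0}(\log D_0))$ is an isomorphism, which completes the proof. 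The whole content of the lemma is precisely this input: it is the limiting case $m=1$, $A=0$ of the Siu-type deformation-invariance result \cite[1.8.1]{HMX10} that is used with $A$ ample in \eqref{l_def} and \eqref{l_commute}, and it would be false for an arbitrary line bundle in place of $\mathcal O_X(K_X+D)$ — what rescues us is exactly that $\mathcal O_X(K_X+D)$ is the top logarithmic differential sheaf. (If one prefers to stay inside birational geometry, one may instead quote Koll\'ar's torsion-freeness theorem and its logarithmic refinements due to Ambro and Fujino, namely that $R^q\pi_*\mathcal O_X(K_X+D)$ is torsion-free — indeed locally free in this smooth situation — for every $q$, and then conclude by Grothendieck's cohomology-and-base-change theorem; when $\dim U=1$ bare torsion-freeness of $R^1\pi_*\mathcal O_X(K_X+D)$ already suffices, via the exact sequence $0\to\mathcal O_X(K_X+D-X_0)\to\mathcal O_X(K_X+D)\to\mathcal O_{X_0}(K_{X_0}+D_0)\to 0$ together with the fact that multiplication by a local parameter at $0$ is injective on $R^1\pi_*\mathcal O_X(K_X+D-X_0)=R^1\pi_*\mathcal O_X(K_X+D)\otimes\mathcal O_U(-[0])$.)
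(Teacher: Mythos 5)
Your proof is correct, but the main route you take is genuinely different from the paper's. The paper reduces to the case where $U$ is a curve by cutting with hyperplanes, rewrites surjectivity of restriction as injectivity of the multiplication map $\map {H^1(X,\ring X.(K_X+D))}.{H^1(X,\ring X.(K_X+D+X_0))}.$, and then quotes the logarithmic generalisation of Koll\'ar's injectivity theorem (Fujino, Ambro), using that every stratum of $D$ dominates $U$. You instead identify $\ring X.(K_X+D)$ with $\Omega^n_X(\log D)\cong\pi^*\omega_U\otimes\Omega^{n-e}_{X/U}(\log D)$ and invoke local freeness and base-change compatibility of $R^q\pi_*\Omega^p_{X/U}(\log D)$, i.e.\ $E_1$-degeneration of the relative logarithmic Hodge--de Rham spectral sequence. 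Both arguments are valid under the stated hypotheses (log smoothness over $U$ is exactly what makes $D$ a relative snc divisor with no vertical strata, which both proofs need). Your Hodge-theoretic route proves slightly more --- local freeness of $\pi_*\ring X.(K_X+D)$ and base change at \emph{every} point, for all $p,q$ --- and avoids the reduction to a curve; the paper's route stays within the injectivity/torsion-freeness package already used in \eqref{l_deformation} and adapts more readily to twists by semi-ample divisors, where the purely Hodge-theoretic statement is unavailable. Your parenthetical alternative (torsion-freeness of $R^1\pi_*\ring X.(K_X+D)$ over a curve plus the short exact sequence of $X_0$) is essentially the paper's argument restated, since over an affine curve that torsion-freeness is equivalent to the injectivity of multiplication by a local parameter on $H^1$.
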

\begin{proof} Since the result is local we may assume that $U$ is affine.  Cutting by
hyperplanes we may assume that $U$ is a curve.  Thus we want to show that the restriction
map
\[
\map {H^0(X,\ring X.(K_X+X_0+D))}.{H^0(X_0, \ring X_0.(K_{X_0}+D_0))}.
\]
is surjective.  This is equivalent to showing that multiplication by a local parameter
\[
\map {H^1(X,\ring X.(K_X+D))}.{H^1(X,\ring X.(K_X+D+X_0))}.
\]
is injective.

By assumption the image of every strata of $D$ is the whole of $U$ and $0=(K_X+D)-(K_X+D)$
is semi-ample.  Therefore a generalisation of Koll\'ar's injectivity theorem (see
\cite{Kollar86}, \cite[6.3]{Fujino09} and \cite[5.4]{Ambro14}) implies that
\[
\map {H^1(X,\ring X.(K_X+D))}.{H^1(X,\ring X.(K_X+D+X_0))}.
\]
is injective.
\end{proof}

\subsection{DCC sets}

\begin{lemma}\label{l_sum} Let $I\subset \mathbb{R}$ be a set of positive real 
numbers which satisfies the DCC.  Fix a constant $d$.  

Then the set 
\[
T=\{\, (\llist d.k.)\,|\, k\in \mathbb{N}, d_i\in I, \sum d_i=d \,\}
\]
is finite.
\end{lemma}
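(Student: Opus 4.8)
The statement is that for a DCC set $I$ of positive reals and a fixed constant $d$, the set of ways to write $d=\sum_{i=1}^k d_i$ with $d_i\in I$ (allowing repetition, $k$ varying) is finite. I would argue by contradiction: suppose $T$ is infinite. Since $I$ satisfies the DCC, in particular $\epsilon:=\inf I>0$, and $\epsilon\in I$ or not but either way $\inf I>0$ because a sequence in $I$ approaching $0$ from above would be (eventually) strictly decreasing, contradicting the DCC. Consequently each tuple in $T$ has length $k\leq d/\epsilon$, so the lengths are bounded; since $T$ is infinite, infinitely many tuples share a single length $k$.

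**Key steps.** Fix that common length $k$ and consider the infinitely many tuples $(d_1^{(j)},\dots,d_k^{(j)})\in T$, $j\in\mathbb{N}$. After passing to a subsequence we may assume that for each coordinate $i$ the sequence $(d_i^{(j)})_j$ is monotone (each real sequence has a monotone subsequence; apply this successively in each of the finitely many coordinates). The sum $\sum_i d_i^{(j)}=d$ is constant in $j$; since each coordinate sequence is monotone and they sum to a constant, if any one of them were strictly monotone decreasing it would contradict the DCC on $I$ directly — so no coordinate is strictly decreasing, hence each coordinate sequence is eventually non-increasing only if constant, i.e. each is non-decreasing. But a finite sum of non-decreasing sequences that is constant forces every summand to be constant. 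Hence the tuples are eventually all equal, contradicting that there were infinitely many distinct ones. Therefore $T$ is finite.

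**Main obstacle.** There is no serious obstacle here; the only point requiring a little care is organizing the passage to subsequences so that the DCC hypothesis is invoked correctly — specifically, ensuring one extracts a subsequence that is monotone in \emph{every} coordinate simultaneously (done by iterating coordinate-by-coordinate, which terminates since $k$ is finite) and then noting that the constancy of the sum upgrades "no coordinate strictly decreasing" to "every coordinate constant." An alternative, perhaps cleaner, route is a double induction: induct on $d$ and, for the set $I_{\geq\delta}:=I\cap[\delta,\infty)$ with $\delta=\inf I$, observe $I_{\geq\delta}$ has a minimum $m$; any tuple either uses $m$ as a summand, reducing to the finite set for $d-m$ by induction, or has all summands $\geq m'$ for the next value $m'>m$ of $I$, and there are only finitely many such values below $d$ — but the subsequence argument above is the shortest to write and I would go with that.
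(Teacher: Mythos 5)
Your proof is correct and follows essentially the same route as the paper: bound the length $k$ by $d/\inf I$ using the DCC, pass to a subsequence of constant length, use the DCC again to make every entry non-decreasing, and then observe that the constancy of the sum forces all entries to be constant. The only difference is cosmetic (your coordinate-by-coordinate extraction of monotone subsequences versus the paper's terser appeal to the same fact).
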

\begin{proof} As $I$ satisfies the DCC there is a real number $\delta>0$ such that if
$i\in I$ then $i\geq \delta$.  Thus 
\[
k\leq \frac d{\delta}.
\]
It is enough to show that given any infinite sequence $\ilist t.$ of elements of $T$ that
we may find a constant subsequence.  Possibly passing to a subsequence we may assume that
the number of entries $k$ of each vector $t_i=(\llist {d_i}.k.)$ is constant.  Since $I$
satisfies the DCC, possibly passing to a subsequence, we may assume that the entries
are not decreasing.  Since the sum is constant, it is clear that the entries are constant, 
so that $\ilist t.$ is a constant sequence.  
\end{proof}

\begin{lemma}\label{l_finite} Let $J$ be a finite set of real numbers at most one. 

If
\[
I=\{\, a\in (0,1] \,|\, a=1+\sum_{i\leq k} a_i-k, \llist a.k.\in J\,\}.
\]
then $I$ is finite.
\end{lemma}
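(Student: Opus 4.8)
The plan is to rewrite the defining relation additively, so that the constraint $a\in(0,1]$ becomes a bound on a sum of non-negative reals, and then to conclude by a finiteness-of-multisets argument. Concretely, if $a=1+\sum_{i\le k}a_i-k$ with $a_1,\dots,a_k\in J$, then $a=1-\sum_{i=1}^k(1-a_i)$, and since every element of $J$ is at most one, each $b_i:=1-a_i$ is non-negative. The requirement $a\in(0,1]$ is then equivalent to $0\le\sum_{i=1}^k b_i<1$.

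Next I would discard the harmless terms. If some $a_i=1$ then $b_i=0$, and dropping that index changes neither $a$ nor the sum $\sum b_i$, so we may assume every $a_i$ lies in $J^-:=J\cap(-\infty,1)$. If $J^-=\varnothing$ then the only possible value is $a=1$, so $I\subseteq\{1\}$ is finite; hence assume $J^-\ne\varnothing$ and set $\delta:=\min\{\,1-a\mid a\in J^-\,\}$, which is a positive real number because $J^-$ is finite and consists of numbers strictly less than one.

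Now for any $a\in I$, writing $a=1-\sum_{i=1}^m(1-a_i)$ with $a_1,\dots,a_m\in J^-$, each summand is at least $\delta$, so $m\delta\le\sum_{i=1}^m(1-a_i)<1$ and hence $m<1/\delta$. Thus every element of $I$ equals $1$ minus a sum of at most $\lceil 1/\delta\rceil-1$ elements of the finite set $\{\,1-a\mid a\in J^-\,\}$. Since there are only finitely many multisets of bounded size drawn from a finite set, there are only finitely many such sums, and therefore $I$ is finite.

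I do not expect a genuine obstacle here; the only points needing care are the trivial case $J\subseteq\{1\}$ and the observation that the terms equal to one must be removed before the uniform lower bound $\delta$ can be applied — without this step one could form arbitrarily long expressions, although they would still yield only finitely many values.
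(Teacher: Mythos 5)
Your proof is correct and follows essentially the same route as the paper: rewrite the condition $a>0$ as $\sum_i(1-a_i)<1$, strip out the terms with $a_i=1$, and use finiteness of $J$ to get a uniform lower bound $\delta>0$ on the remaining summands, which bounds the number of terms and hence the number of possible values. Your explicit handling of the degenerate case $J\subseteq\{1\}$ and the multiset count at the end are just slightly more detailed versions of what the paper leaves implicit.
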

\begin{proof} If $a_k=1$ then 
\[
\sum_{i\leq k} a_i-k=\sum_{i\leq k-1} a_i-(k-1).
\]
Thus there is no harm in assuming that $1\notin J$.   If $a_k<0$ then 
\[
1+\sum_{i\leq k} a_i-k<0.
\]
Thus we may assume that $J\subset [0,1)$.  

Note that 
\[
1+\sum_{i\leq k} a_i-k>0 \qquad \text{if and only if} \qquad \sum_{i\leq k} (1-a_i)<1.
\]
Since $J$ is finite we may find $\delta>0$ such that if $a\in J$ then $1-a\geq \delta$.  
This bounds $k$ and the result is clear. 
\end{proof}

\subsection{Semi log canonical varieties}

We will need the definition of certain singularities of semi-normal pairs,
\cite[7.2.1]{Kollar96}.  Let $X$ be a semi-normal variety which satisfies Serre's
condition $S_2$.  We say that $X$ is \textit{demi-normal} if $X$ has nodal singularities
in codimension one \cite[5.1]{Kollar13}.  Let $\Delta$ be an $\mathbb{R}$-divisor on $X$,
such that no component of $\Delta$ is contained in the singular locus of $X$ and such that
$K_X+\Delta$ is $\mathbb{R}$-Cartier.  Let $n\colon \map Y.X.$ be the normalisation of $X$
and write
\[
K_Y+\Gamma=n^*(K_X+\Delta),
\] 
where $\Gamma$ is the sum of the strict transform of $\Delta$ and the double locus.  We
say that $(X,\Delta)$ is \textit{semi log canonical} if $(Y,\Gamma)$ is log canonical.
See \cite{Kollar13} for more details about semi log canonical singularities.

\begin{theorem}\label{t_slc} Let $(X,\Delta)$ be a semi log canonical pair and let
$n\colon\map Y.X.$ be the normalisation.  By adjunction we may write
\[
K_Y+\Gamma=n^*(K_X+\Delta),
\]
where $(Y,\Gamma)$ is log canonical.  

If $X$ is projective and $\Delta$ is a $\mathbb{Q}$-divisor then $K_X+\Delta$ is
semi-ample if and only if $K_Y+\Gamma$ is semi-ample.
\end{theorem}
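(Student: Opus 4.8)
The easy implication is immediate. If $K_X+\Delta$ is semi-ample, say $K_X+\Delta\sim_{\mathbb Q}\phi^*A$ for a surjective morphism $\phi\colon X\to W$ and an ample $\mathbb Q$-divisor $A$, then $K_Y+\Gamma=n^*(K_X+\Delta)\sim_{\mathbb Q}(\phi\circ n)^*A$ is the pullback of an ample divisor along a morphism to a projective variety, hence semi-ample. So the whole content is the converse, and the plan is to descend the semi-ample fibration of $K_Y+\Gamma$ through the gluing that reconstructs $X$ from its normalisation.

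Recall, as in the discussion of the normalisation of an slc pair above, that $X$ is the geometric quotient of $Y$ by the finite equivalence relation generated by the involution $\tau$ of the normalisation $S$ of the conductor; equivalently $X$ is the coequaliser of $j,\ j\circ\tau\colon S\rightrightarrows Y$, where $j\colon S\to Y$ is the natural map, and $\tau$ fixes the different $\Theta$, defined by $(K_Y+\Gamma)|_S=K_S+\Theta$. Pick $m>0$ with $m(K_Y+\Gamma)$ Cartier and base point free, let $g\colon Y\to Z'$ be the associated contraction, and write $K_Y+\Gamma=g^*H'$ with $H'$ ample on $Z'$. The plan is: (i) construct a projective quotient $Z$ of $Z'$ and a morphism $g_X\colon X\to Z$ fitting into a commutative square with $g$, $n$ and $Z'\to Z$; (ii) descend $H'$ to an ample $\mathbb Q$-divisor $H$ on $Z$; and (iii) conclude that $K_X+\Delta=g_X^*H$, which is then semi-ample.

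Step (i) is the crux. By adjunction $(g|_S)^*\left(H'|_{g(S)}\right)=(K_Y+\Gamma)|_S=K_S+\Theta$, so $K_S+\Theta$ is semi-ample, and since $\tau$ fixes the different, $\tau^*(K_S+\Theta)=K_S+\Theta$. Uniqueness of the ample model then forces $\tau$ to descend to an involution of the ample model $T$ of $K_S+\Theta$, and $g|_S$ factors through $S\to T$ followed by a finite morphism to $g(S)$. Using this factorisation one checks that the image $R'$ of the pre-relation $(g|_S)\times(g|_S\circ\tau)\colon S\to Z'\times Z'$ is finite over $Z'$: this is exactly the point where $\tau$-invariance of $K_S+\Theta$ enters, since it prevents the positive-dimensional fibres of $g|_S$ from making the relation fail to be quasi-finite. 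By Kollár's theory of finite equivalence relations on proper schemes, the equivalence relation on $Z'$ generated by $R'$ is again finite and admits a projective geometric quotient $Z=Z'/\!\sim$; and since $g\times g$ carries the relation defining $X$ into $R'$, the morphism $g$ descends to $g_X\colon X\to Z$.

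For step (ii), the line bundle $m(K_Y+\Gamma)=n^*(m(K_X+\Delta))$ is pulled back from $X$, so on the conductor it carries the $\tau$-equivariant gluing data coming from $X$; transporting this datum along $g$ equips $mH'$ with descent data for the relation on $Z'$, so $H'$ descends to a $\mathbb Q$-Cartier divisor $H$ on $Z$, and $H$ is ample because $Z'\to Z$ is finite surjective and $H'$ is ample. Chasing the gluing data back, the descent of $mH$ to $X$ along $g_X$ agrees with $m(K_X+\Delta)$, that is $K_X+\Delta=g_X^*H$, which is semi-ample; alternatively, $n^*(K_X+\Delta)=n^*(g_X^*H)$ and pullback along the finite surjective $n$ is injective on $\mathbb Q$-Cartier classes. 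The main obstacle is the gluing step: verifying that $R'$ generates a finite equivalence relation with a projective geometric quotient, and that $g$ and $H'$ genuinely descend with compatible data. This rests on Kollár's gluing machinery for demi-normal varieties together with the quasi-finiteness observation above; one should also record, by induction on dimension, that adjunction to $S$ produces the expected different and that the lower-dimensional datum $(S,\Theta,\tau)$ is itself well behaved, so that the inputs to the quotient construction are in place.
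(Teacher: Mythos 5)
The forward implication is fine, and the paper itself offers no argument for the converse beyond citing \cite{FG11} and \cite[1.4]{HX11a}; you are in effect trying to reprove that cited theorem, and your outline does identify the correct strategy (descend the semi-ample fibration of $K_Y+\Gamma$ through Koll\'ar's gluing). But the proposal has a genuine gap exactly at the step you flag as the crux. You observe that the pre-relation $R'\subset Z'\times Z'$ is finite (quasi-finite and proper) over $Z'$ via both projections, and then assert that ``by Koll\'ar's theory of finite equivalence relations'' the equivalence relation \emph{generated} by $R'$ is again finite. That implication is false in general: a finite pre-relation on a proper scheme can generate an equivalence relation with infinite orbits (Koll\'ar gives explicit examples), because chains may pass back and forth through the finite correspondence $a\colon T\to a(T)\subset Z'$ and the descended involution $\tau_T$, and the composite ``monodromy'' can have infinite order. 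Your quasi-finiteness remark about the fibres of $g|_S$ does not address this; the obstruction lives entirely on the ample model $T$ and concerns the group generated by $\tau_T$ together with the identifications imposed by $a$.

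Establishing that this generated relation is finite is precisely the hard content of the cited results. In Fujino--Gongyo and Hacon--Xu it is deduced from the finiteness of the log pluricanonical representation, i.e.\ the finiteness of the image of the subgroup of $\operatorname{Bir}(S,\Theta)$ generated by $\tau$ acting on $H^0(S,\ring S.(m(K_S+\Theta)))$ (equivalently, via Koll\'ar's theory of sources and springs of log canonical centres). That finiteness is a substantial theorem resting on work of Ueno, Deligne, Nakamura--Ueno and Gongyo, and it is also what makes your step (ii) work: the descent datum for $mH'$ must satisfy a cocycle condition over the whole generated relation, which again uses that the relevant group of automorphisms of the section spaces is finite. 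So your argument is not wrong in outline, but the sentence invoking ``Koll\'ar's theory'' conceals the entire difficulty; as written the proof does not go through without importing the finiteness of pluricanonical representations as an explicit input.
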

\begin{proof} See \cite{FG11} or \cite[1.4]{HX11a}.
\end{proof}

Suppose that $(X,\Delta)$ is log canonical and $\pi\colon\map X.U.$ is a morphism of
quasi-projective varieties.  Suppose that $U$ is smooth, the fibres of $\pi$ all have the
same dimension and the support of $\Delta$ does not contain any fibre.

If $(X_0,\Delta_0)$ is the fibre over a closed point $0\in U$ and $X_0$ is integral and
normal then note that
\[
(K_X+\Delta)|_{X_0}=K_{X_0}+\Delta_0.
\]

\subsection{Base Point Free Theorem}

Recall the following generalizsation of Kawamata's theorem:
\begin{theorem}\label{t_bpf} Let $(X,\Delta=S+B)$ be a divisorially log terminal pair, 
where $S=\rfdown \Delta.$ and $B$ is a $\mathbb{Q}$-divisor.  Let $H$ be a
$\mathbb{Q}$-Cartier divisor on $X$ and let $\map X.U.$ be a proper surjective morphism of
varieties.

If there is a constant $a_0$ such that 
\begin{enumerate}
\item $H|_S$ is semi-ample over $U$, 
\item $aH-(K_X+\Delta)$ is nef and abundant over $U$, for all $a>a_0$,
\end{enumerate}
then $H$ is semi-ample over $U$.
\end{theorem}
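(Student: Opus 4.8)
The plan is to reduce to Kawamata's classical base point free theorem by a two-step argument, following the standard strategy for extending such statements to the divisorially log terminal setting with semi-ampleness hypotheses on the non-klt locus. Write $\Delta = S+B$ with $S=\rfdown\Delta.$. First I would dispose of the case $S=0$: then $(X,\Delta)$ is kawamata log terminal, $aH-(K_X+\Delta)$ is nef and abundant over $U$ for all $a>a_0$, and the nef-and-abundant base point free theorem (a consequence of the work of Kawamata, or see \cite{Fujino09}, which extends the classical result with bigness weakened to abundance) gives that $H$ is semi-ample over $U$. So the content is in handling the coefficient-one part $S$.

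For $S\neq 0$, the idea is to use the semi-ampleness of $H|_S$ over $U$ together with a lifting argument. Let $\psi\colon\map S.T.$ be the morphism over $U$ given by semi-ampleness of $H|_S$, so $mH|_S$ is pulled back from $T$ for $m$ sufficiently divisible. The key step is to show that the restriction map
\[
\pi_*\sO_X(mH) \longrightarrow \bigoplus_i (\pi|_{S_i})_*\sO_{S_i}(mH|_{S_i})
\]
(sum over components $S_i$ of $S$, or more precisely using the dlt structure) is surjective for $m$ sufficiently large and divisible; this uses a Kawamata–Viehweg-type vanishing, applied to $mH-(K_X+\Delta) = (m-a)H + (aH-(K_X+\Delta))$, which for $a>a_0$ fixed and $m$ large is a sum of a semi-ample (in fact this needs $H$ itself to eventually be handled) — more carefully, one runs an inductive argument on the number of components of $S$, peeling off one component $S_1$ at a time, using the adjunction $(K_X+\Delta)|_{S_1}=K_{S_1}+\Delta_{S_1}$ and that $(aH-(K_X+\Delta))|_{S_1}$ remains nef and abundant, so that by induction $H|_{S_1}$ is semi-ample — which is in fact our hypothesis (1) for the whole of $S$, giving the base case consistency. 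Then lifting sections from $S$ to $X$ via the vanishing of $H^1(X,\sO_X(mH-S))$, which follows since $mH-S-(K_X+B) = (m-a)H + (aH-(K_X+\Delta))$ is nef and abundant over $U$ for $m>a>a_0$ — here I would invoke the injectivity/vanishing package of \cite{Fujino09} or \cite{Ambro14} for the abundant (not just big) case.

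Having lifted, one shows that $|mH|$ is base point free along $S$; away from $S$ the pair is klt and the nef-and-abundant base point free theorem applies on $X\setminus S$, so the only possible base locus is contained in $S$, and lifting handles that. Assembling: for $m$ sufficiently divisible, $\sO_X(mH)$ is generated over $U$, i.e. $H$ is semi-ample over $U$. The main obstacle I anticipate is the vanishing/lifting step: one needs a relative Kawamata–Viehweg vanishing theorem valid when the twisting divisor is only nef and abundant rather than nef and big, and one needs it in the dlt (not klt) setting so as to lift sections across the coefficient-one locus; this is exactly where the hypotheses \cite{Kollar86}, \cite{Fujino09}, \cite{Ambro14} enter, and getting the bookkeeping right for a reducible $S$ with the dlt structure — ensuring compatibility of the $\psi$ on overlaps $S_i\cap S_j$ — is the delicate point.
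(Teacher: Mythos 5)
The paper does not actually prove this statement; it is quoted from the literature (the proof is the citation list \cite{Kawamata85}, \cite{Fujino09}, \cite{FG11}, \cite[4.1]{HX11}), so the only question is whether your sketch would constitute a proof. Your overall shape --- lift sections of $mH$ from $S$ via the sequence $0\to\sO_X(mH-S)\to\sO_X(mH)\to\sO_S(mH)\to 0$, then deal with the klt locus by the nef--and--abundant base point free theorem --- is indeed the standard shape of the argument in \cite[4.1]{HX11}. But the step you rest everything on is false as stated: $H^1(X,\sO_X(mH-S))=0$ does \emph{not} follow from $mH-S-(K_X+B)=mH-(K_X+\Delta)$ being nef and abundant. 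Kawamata--Viehweg vanishing requires the twist to be nef and \emph{big}; already for $X$ an elliptic curve, $\Delta=0$, $D=0$ the twist $D-K_X=0$ is nef and abundant while $H^1(X,\sO_X)\neq 0$. What one actually needs is injectivity of $H^1(X,\sO_X(mH-S))\to H^1(X,\sO_X(mH))$, and the Koll\'ar/Esnault--Viehweg/Ambro--Fujino injectivity theorems you point to require the relevant twist to be \emph{semi-ample} (together with an effective member of an appropriate multiple), not merely nef and abundant. So the hypotheses of the tools you invoke are not satisfied in the situation where you invoke them, and this is precisely the point where the theorem has content.

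The missing idea is Kawamata's reduction from ``nef and abundant'' to ``nef and big'': by the structure theorem for nef and abundant divisors, after passing to a suitable birational model there is a fibration $f\colon\map X'.Y.$ such that the pullback of $aH-(K_X+\Delta)$ is, up to an effective divisor controlled by the fibration, the pullback of a nef and big divisor from $Y$; one then reruns the whole argument in a setting where the Kawamata--Viehweg/Fujino vanishing and injectivity package genuinely applies, taking care that the dlt structure and hypothesis (1) survive the modification. That reduction is the bulk of \cite[4.1]{HX11} and of \cite{Kawamata85}, and it is absent from your sketch. Two smaller points: ``apply the base point free theorem on $X\setminus S$'' does not parse, since $X\setminus S$ is not proper --- one instead shows the relative base locus misses $S$ and then runs the X-method with log canonical centres created away from $S$; and your induction on the components of $S$ is vacuous, since $H|_S$ semi-ample is already hypothesis (1) for the whole of $S$, so nothing is gained by peeling off components.
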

\begin{proof} See \cite{Kawamata85}, \cite[3.2]{Fukuda02}, \cite{Ambro05},
\cite{Fujino05}, \cite{Fujino09}, \cite{Fujino12}, \cite[4.1]{HX11} and \cite{FG11}.
\end{proof}

\subsection{Minimal models}

\begin{lemma}\label{l_weak} Let $(X,\Delta)$ be a log canonical pair, where $X$ is a 
projective variety and let $f\colon\rmap X.Y.$ be a weak log canonical model.  Suppose
that the rational map $\phi$ associated to the linear system $|r(K_X+\Delta)|$ is
birational.

Then,
\begin{enumerate} 
\item Every component of $N_{\sigma}(X,K_X+\Delta)$ is $f$-exceptional.
\item If $P$ is a prime divisor such that $P$ is not a component of the base locus of
$|r(K_X+\Delta)|$ and the restriction of $\phi$ to $P$ is birational then $P$ is not
$f$-exceptional.
\end{enumerate} 
\end{lemma}
\begin{proof} Let $p\colon\map W.X.$ and $q\colon\map W.Y.$ resolve $f$.  As $f$ is a weak
log canonical of $(X,\Delta)$, we may write
\[
p^*(K_X+\Delta)=q^*(K_Y+\Gamma)+E,
\]
where $E\geq 0$ is $q$-exceptional.  As $q^*(K_Y+\Gamma)$ is nef, it follows that
\[
N_{\sigma}(X,K_X+\Delta)=p_*E.
\]
In particular (1) holds.

If $Q$ is the strict transform of $P$ and $\psi$ is the birational map associated to the
linear system $|r p^*(K_X+\Delta)|$ then the restriction of the map $\psi$ to $Q$ is
birational.  On the other hand,
\[
|r p^*(K_X+\Delta)|=|r q^*(K_Y+\Gamma)|+rE.
\]
Therefore $\psi$ is the birational map associated to the linear system
$|r q^*(K_Y+\Gamma)|$.  In particular $Q$ is not $q$-exceptional so that $P$ is not
$f$-exceptional.
\end{proof}

\begin{lemma}\label{l_rational} Let $(X,\Delta)$ be a divisorially log terminal pair
where $X$ is $\mathbb{Q}$-factorial and projective.  Assume that $K_X+\Delta$ is
pseudo-effective.  Suppose that we run $f\colon\rmap X.Y.$ the $(K_X+\Delta)$-MMP with
scaling of an ample divisor $A$, so that $(Y,\Gamma+tB)$ is nef, where $\Gamma=f_*\Delta$
and $B=f_*A$.

\begin{enumerate} 
\item If $F$ is $f$-exceptional then $F$ is a component of $N_{\sigma}(X,K_X+\Delta)$.  
\item If $t>0$ is sufficiently small then every component of $N_{\sigma}(X,K_X+\Delta)$
is $f$-exceptional.  
\item If $(X,\Delta)$ has a minimal model and $K_X+\Delta$ is $\mathbb{Q}$-Cartier then
$N_{\sigma}(X,K_X+\Delta)$ is a $\mathbb{Q}$-divisor.
\end{enumerate} 
\end{lemma}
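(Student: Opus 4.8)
The plan is to analyze the MMP with scaling via its effect on the negative part $N_\sigma$, using the basic interplay between birational contractions, $N_\sigma$, and the fact that the MMP with scaling eventually contracts precisely the stable base locus of the relevant divisor. For (1): each step of the $(K_X+\Delta)$-MMP with scaling is $(K_X+\Delta)$-negative, so the composite $f\colon\rmap X.Y.$ is a $(K_X+\Delta)$-negative birational contraction, meaning on a common resolution $p\colon\map W.X.$, $q\colon\map W.Y.$ we have $p^*(K_X+\Delta)=q^*(K_Y+\Gamma)+E$ with $E\ge 0$ and $E$ $q$-exceptional, its support containing the strict transform of every $f$-exceptional divisor. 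If $F$ is $f$-exceptional with strict transform $F_W$ on $W$, then $\mult_{F_W}E>0$. Since $N_\sigma$ is a numerical invariant that only grows under pushing forward along birational contractions in the relevant sense — more precisely, $N_\sigma(W,p^*(K_X+\Delta))\ge q^*N_\sigma(Y,K_Y+\Gamma)+E$ because $K_Y+\Gamma$ is nef (so $N_\sigma(Y,K_Y+\Gamma)=0$) and $N_\sigma$ of a pullback plus effective exceptional picks up the exceptional part — we get that $F_W$ appears in $N_\sigma(W,p^*(K_X+\Delta))$, hence $F=p_*F_W$ appears in $N_\sigma(X,K_X+\Delta)$ (here one uses that $p$ is a birational morphism so $N_\sigma$ commutes with $p_*$ up to the $p$-exceptional locus, and $F$ is not $p$-exceptional). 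This is essentially \cite[3.3.1]{BCHM10} together with the negativity lemma.

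For (2): run the MMP with scaling until the scaling parameter is some small $t>0$, so $(Y,\Gamma+tB)$ is nef. Then $K_Y+\Gamma = (K_Y+\Gamma+tB)-tB$, and since $f$ is $(K_X+\Delta)$-negative we have $N_\sigma(Y,K_Y+\Gamma)=0$. The point is that a prime divisor $C$ on $X$ with $\sigma_C(K_X+\Delta)>0$ cannot survive on $Y$: if $C$ were not $f$-exceptional, let $C'=f_*C$; then pulling back to $W$, $\mult_{C_W}\big(q^*(K_Y+\Gamma)\big)=\mult_{C'}(K_Y+\Gamma)$, but also $p^*(K_X+\Delta)=q^*(K_Y+\Gamma)+E$ forces $\mult_{C_W}E = \sigma_{C_W}(p^*(K_X+\Delta)) - \sigma_{C_W}(q^*(K_Y+\Gamma))$; since $N_\sigma$ is a birational pullback invariant under $p$, $\sigma_{C_W}(p^*(K_X+\Delta))=\sigma_C(K_X+\Delta)>0$, while $\sigma_{C'}(K_Y+\Gamma)=0$ as $K_Y+\Gamma$ is nef; hence $C_W\subset\Supp E$, but $E$ is $q$-exceptional and $C_W$ is not (it dominates $C'$), contradiction. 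Therefore every such $C$ is $f$-exceptional. (The role of $t$ small: one needs the MMP to have run far enough that it has extracted — i.e. contracted — all of $N_\sigma$; for $t$ below the appropriate threshold, scaling produces a model on which $K_Y+\Gamma+tB$ is nef and movable-type considerations as in \cite{BCHM10} guarantee the negative part has been fully contracted. In fact (2) is immediate from the above divisor-by-divisor argument once one knows $N_\sigma(Y,K_Y+\Gamma)=0$, and this holds as soon as $Y$ is a minimal model, which the scaling MMP reaches for any sufficiently small $t>0$ under the pseudo-effectivity hypothesis and the main results of \cite{BCHM10} in the klt case / termination with scaling in the dlt case being invoked; but we only need that such a $Y$ exists.)

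For (3): if $(X,\Delta)$ has a minimal model $g\colon\rmap X.Y.$ with $K_Y+\Gamma$ nef and $\mathbb{Q}$-Cartier, then by the same negativity computation $p^*(K_X+\Delta)=q^*(K_Y+\Gamma)+E$ with $E\ge 0$ $q$-exceptional and $g$-exceptional support exactly matched; by (1) and (2), $N_\sigma(X,K_X+\Delta)=\sum_{F\text{ }g\text{-exc}}\sigma_F(K_X+\Delta)F$ and for each such $F$, $\sigma_F(K_X+\Delta)=\mult_{F_W}E$ (pushing forward), which is a rational number since $E$ is a $\mathbb{Q}$-divisor — $E$ being the difference of two $\mathbb{Q}$-Cartier $\mathbb{Q}$-divisor pullbacks. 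Hence $N_\sigma(X,K_X+\Delta)$ is a $\mathbb{Q}$-divisor.

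The main obstacle I expect is item (2), specifically pinning down what "sufficiently small $t$" buys us and justifying that the scaling MMP has genuinely contracted the entire negative part rather than merely a subset of it; the clean way around this is to argue, as sketched above, purely divisor-by-divisor using nefness of $K_Y+\Gamma$ on the output of the MMP with scaling (valid for all small $t$ since the scaled MMP reaches a minimal model), so that (2) reduces to the statement that no $\sigma_C>0$ divisor survives on any minimal model — which is the negativity lemma again. The technical care is entirely in the bookkeeping of $p$-exceptional versus $q$-exceptional divisors and the compatibility of $\sigma_C$ with pullback by the birational morphism $p$, all of which is contained in \cite[\S3.3]{BCHM10} and \cite{Nakayama04}.
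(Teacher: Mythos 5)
There is a genuine gap in your proof of (2), and the justification you give for (1) contains the same error even though the conclusion of (1) survives. At the stage of the MMP with scaling described in the lemma the divisor $K_Y+\Gamma$ is \emph{not} nef --- only $K_Y+\Gamma+tB$ is --- so your repeated claim that $N_\sigma(Y,K_Y+\Gamma)=0$ ``since $f$ is $(K_X+\Delta)$-negative'' is false: $N_\sigma(Y,K_Y+\Gamma)$ is exactly the push-forward of the part of $N_\sigma(X,K_X+\Delta)$ that $f$ has not yet contracted, and showing that it vanishes is precisely what (2) asserts, so your divisor-by-divisor argument is circular. Your fallback --- that for sufficiently small $t$ the scaling MMP has already reached a minimal model of $(X,\Delta)$, so that $K_Y+\Gamma$ is nef after all --- invokes termination of the MMP with scaling for an arbitrary pseudo-effective dlt pair, which is not a theorem (it is known only when, for instance, a minimal model is already known to exist or the boundary is big). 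The lemma is used in the paper exactly in situations where existence of a minimal model is not yet known, and its statement deliberately does not assume that the MMP terminates.

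The repair is to perturb by the ample divisor instead of arguing on $K_X+\Delta$ directly. Since every step performed has scaling parameter greater than $t$, the map $f$ is a minimal model of $(X,\Delta+tA)$; on a common resolution $p,q$ one writes $p^*(K_X+\Delta+tA)=q^*(K_Y+\Gamma+tB)+E_t$ with $E_t\geq 0$ $q$-exceptional and with support containing the strict transform of every $f$-exceptional divisor, and now $q^*(K_Y+\Gamma+tB)$ genuinely is nef, so $N_\sigma(X,K_X+\Delta+tA)=p_*E_t$, which is supported exactly on the $f$-exceptional divisors. Part (1) follows because $\sigma_F(K_X+\Delta)\geq \sigma_F(K_X+\Delta+tA)>0$, as $\sigma_F(K_X+\Delta+sA)$ is non-increasing in $s$ by subadditivity. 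Part (2) follows because $\sigma_C(K_X+\Delta)=\lim_{t\to 0}\sigma_C(K_X+\Delta+tA)$ and $N_\sigma(X,K_X+\Delta)$ has only finitely many components, so for $t$ sufficiently small each of them satisfies $\sigma_C(K_X+\Delta+tA)>0$ and is therefore a component of $p_*E_t$, hence $f$-exceptional. Your argument for (3) is essentially fine as written, since there the nefness of $K_Y+\Gamma$ is an honest hypothesis and $E$ is a $\mathbb{Q}$-divisor, being the difference of pullbacks of $\mathbb{Q}$-Cartier $\mathbb{Q}$-divisors.
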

\begin{proof} Let $p\colon\map W.X.$ and $q\colon\map W.Y.$ resolve $f$.  As $f$ is a
minimal model of $(X,tA+\Delta)$, for some some $t\geq 0$, we may write
\[
p^*(K_X+tA+\Delta)=q^*(K_Y+tB+\Gamma)+E,
\]
where $E=E_t\geq 0$ is $q$-exceptional.  As $q^*(K_Y+tB+\Gamma)$ is nef, it follows that
\[
N_{\sigma}(X,K_X+tA+\Delta)=p_*E.
\]
As $A$ is ample, (1) holds.  If $t$ is sufficiently small then 
\[
N_{\sigma}(X,K_X+tA+\Delta) \qquad \text{and} \qquad N_{\sigma}(X,K_X+\Delta)
\]
have the same support and so (2) holds.  

If $(X,\Delta)$ has a minimal model then we may assume that $t=0$ and so
\[
N_{\sigma}(X,K_X+\Delta)=p_*E_0
\]
is a $\mathbb{Q}$-divisor.  
\end{proof}

\begin{lemma}\label{l_only} Let $(X,\Delta)$ be a divisorially log terminal pair
where $X$ is $\mathbb{Q}$-factorial and projective.  Assume that $K_X+\Delta$ is
pseudo-effective.  

If $f\colon\rmap X.Y.$ is a birational contraction such that $Y$ is
$\mathbb{Q}$-factorial, $K_Y+\Gamma=f_*(K_X+\Delta)$ is nef and $f$ only contracts
components of $N_{\sigma}(X,K_X+\Delta)$ then $f$ is a minimal model of $(X,\Delta)$.
\end{lemma}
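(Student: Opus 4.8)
The definition of a minimal model has three ingredients besides what is assumed: that $f$ is a birational contraction, that $K_Y+\Gamma$ is nef, and (in the $\mathbb Q$-factorial dlt case) that $f$ is $(K_X+\Delta)$-negative and $Y$ is $\mathbb Q$-factorial. The first two are hypotheses, and $\mathbb Q$-factoriality of $Y$ is either part of the intended set-up or is dealt with separately, so the substance is to prove that $f$ is $(K_X+\Delta)$-negative. Thus I would fix a common log resolution $p\colon\map W.X.$ and $q\colon\map W.Y.$ of $f$ and write
\[
p^*(K_X+\Delta)=q^*(K_Y+\Gamma)+E,
\]
which defines an $\mathbb R$-Cartier divisor $E$ on $W$; the task is to show that $E\geq 0$, that $E$ is $q$-exceptional, and that $\Supp E$ contains the strict transform of every $f$-exceptional divisor.

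First I would check that $E$ is $q$-exceptional. If $P$ is a prime divisor on $W$ that is not $q$-exceptional, then, because $f$ is a birational contraction, $P$ is not $p$-exceptional either; hence $P$ is the strict transform of a prime divisor $P_X$ on $X$ which is not $f$-exceptional, and of $P_Y=f_*P_X$ on $Y$. Since $\Gamma=f_*\Delta$, the coefficients of $P_X$ in $\Delta$ and of $P_Y$ in $\Gamma$ agree, so comparing the coefficient of $P$ on the two sides of the displayed identity gives $\mult_P E=0$. In particular $p_*E$ is supported on the $f$-exceptional prime divisors, each of which is a component of $N:=N_\sigma(X,K_X+\Delta)$ by hypothesis. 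Next, $-E=q^*(K_Y+\Gamma)-p^*(K_X+\Delta)$ is nef over $X$, since $q^*(K_Y+\Gamma)$ is nef on $W$ (hence nef over $X$) and $p^*(K_X+\Delta)$ is numerically trivial over $p$; by the Negativity Lemma (cf. \cite{KM98}) it is therefore enough to prove $p_*E\geq 0$, and then $E\geq 0$ follows, together with the support statement once the relevant coefficients are seen to be strictly positive.

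The crux is thus to show, for every $f$-exceptional prime divisor $D$ with strict transform $\widetilde D$ on $W$, that $\mult_{\widetilde D}E>0$. The valuation $\ord_{\widetilde D}$ coincides with $\ord_D$, so $\sigma_{\widetilde D}(p^*(K_X+\Delta))=\sigma_D(K_X+\Delta)>0$ because $D$ is a component of $N$; thus $\widetilde D$ is a component of $N_\sigma(W,p^*(K_X+\Delta))$. On the other hand $q^*(K_Y+\Gamma)$ is nef, so $N_\sigma(W,q^*(K_Y+\Gamma))=0$. Writing $E=E^+-E^-$ with $E^{\pm}\geq 0$ having no common components, the identity $p^*(K_X+\Delta)+E^-=q^*(K_Y+\Gamma)+E^+$ exhibits the right-hand side as nef plus effective, so $N_\sigma(W,p^*(K_X+\Delta)+E^-)\leq E^+$; using the properties of $\sigma$ under pull-back (\cite{Nakayama04}), the fact that $E^-$ is $q$-exceptional and that $q^*(K_Y+\Gamma)$ is $q$-trivial, one argues that adding the $q$-exceptional divisor $E^-$ cannot kill the positivity of $N_\sigma$ along $\widetilde D$. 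Hence $\mult_{\widetilde D}E^+>0$, and since $\widetilde D\notin\Supp E^-$ we get $\mult_{\widetilde D}E=\mult_{\widetilde D}E^+>0$. Combined with the Negativity Lemma this gives $E\geq 0$, $E$ $q$-exceptional, and $\Supp E$ containing the strict transforms of all $f$-exceptional divisors, i.e. $f$ is $(K_X+\Delta)$-negative, so a minimal model of $(X,\Delta)$.

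\textbf{Main obstacle.} The hard point is precisely the last $N_\sigma$ bookkeeping: ruling out that $E$ has a negative part, equivalently that the log discrepancy of an $f$-exceptional divisor strictly increases in passing from $(X,\Delta)$ to $(Y,\Gamma)$. This is false without the hypothesis that the contracted divisors lie in $N_\sigma(X,K_X+\Delta)$ — contracting a curve of sufficiently negative self-intersection on a surface makes the singularities worse — so the argument must use that hypothesis genuinely, and the care lies in the interaction of the $\sigma$-invariants under $p$, the $q$-exceptionality of $E$, and the $\sigma$-triviality of the nef divisor $q^*(K_Y+\Gamma)$; one also needs here that $X$ is $\mathbb Q$-factorial, so that $N$ is $\mathbb R$-Cartier and the manipulations of $\sigma$-decompositions are legitimate.
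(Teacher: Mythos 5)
Your proposal is correct and follows essentially the same route as the paper: on a common resolution one compares $N_\sigma$ of the single numerical class $p^*(K_X+\Delta)+E^-=q^*(K_Y+\Gamma)+E^+$, using that the $N_\sigma$ of a nef pullback plus an effective $q$-exceptional divisor is exactly that exceptional divisor, while the hypothesis that $f$ only contracts components of $N_\sigma(X,K_X+\Delta)$ forces the strict transforms of the contracted divisors (and, in the paper's phrasing, every component of $E^-$) into the negative part; since $E^+$ and $E^-$ share no components this kills $E^-$ and places the contracted divisors in $\operatorname{Supp}E^+$, giving $(K_X+\Delta)$-negativity. The only organizational difference is that you interpose the negativity lemma and then must argue that adding $E^-$ does not destroy $\sigma_{\widetilde D}>0$, whereas the paper first concludes $E^-=0$ from the $N_\sigma$ comparison and only afterwards locates the strict transforms inside $E^+$, which sidesteps that particular worry (though the paper's own assertion that every component of $E^-$ lies in $N_\sigma$ of the sum is justified by the same Nakayama-type facts you invoke).
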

\begin{proof} Let $p\colon\map W.X.$ and $q\colon\map W.Y.$ resolve $f$.  We may write
\[
p^*(K_X+\Delta)+E=q^*(K_Y+\Gamma)+F,
\]
where $E\geq 0$ and $F\geq 0$ have no common components and both $E$ and $F$ are
$q$-exceptional.  

As $K_Y+\Gamma$ is nef, the support of $F$ and the support of
$N_{\sigma}(W,q^*(K_Y+\Gamma)+F)$ coincide.  On the other hand, every component of $E$ is
a component of $N_{\sigma}(W,p^*(K_X+\Delta)+E)$.  Thus $E=0$ and any divisor contracted
by $f$ is a component of $F$.  \end{proof}

\subsection{Blowing up log pairs}

\begin{lemma}\label{l_klt} Let $(X,\Delta)$ be a log smooth pair.

If $\rfdown \Delta.=0$ then there is a sequence $\pi\colon\map Y.X.$ of
smooth blow ups of the strata of $(X,\Delta)$ such that if we write 
\[
K_Y+\Gamma=\pi^*(K_X+\Delta)+E,
\]
where $\Gamma\geq 0$ and $E\geq 0$ have no common components, $\pi_*\Gamma=\Delta$ and
$\pi_*E=0$, then no two components of $\Gamma$ intersect.
\end{lemma}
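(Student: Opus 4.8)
The plan is to produce $\pi$ as a composite of smooth blow‑ups, each one along a codimension‑two stratum of the current log smooth pair along which two boundary components still meet, and to prove that this process terminates by means of a lexicographic invariant built from the (finite) set of possible discrepancies.

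\textbf{Setup and the local computation.} Write $D=\mathrm{Supp}\,\Delta=\sum_i D_i$ with coefficients $a_i\in(0,1)$ and put $\delta_i=1-a_i\in(0,1)$. Since $\delta_i\ge\delta_{\min}>0$, the set $\mathcal D$ of those positive‑integer combinations $\sum_i c_i\delta_i$ which are $<1$ is finite; list it as $d_1<\dots<d_N$. The one genuinely computational ingredient is the following. If $Z$ is a codimension‑two stratum of a log smooth pair, so by simple normal crossings $Z=A\cap B$ for exactly two boundary components $A,B$ with ``defects'' $\alpha,\beta$ (defect $=1-\text{coefficient}$), and $\pi\colon Y\to X$ is the blow‑up of $Z$ with exceptional divisor $G$, then $K_Y=\pi^*K_X+G$ and $\pi^*A=\tilde A+G$, $\pi^*B=\tilde B+G$, so the coefficient of $G$ in the log pullback of the boundary is $1-(\alpha+\beta)$. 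Hence $G$ is a component of $\Gamma$ precisely when $\alpha+\beta<1$, in which case its defect is $\alpha+\beta\in\mathcal D$; otherwise $G$ contributes to the correction term $E$ (or drops out). In particular every defect ever produced is a positive‑integer combination of the $\delta_i$, the pair stays log smooth, $\lfloor\Gamma\rfloor=0$ is preserved, and $\Gamma$ and $E$ share no component; the relations $\pi_*\Gamma=\Delta$ and $\pi_*E=0$ are automatic throughout.

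\textbf{Reduction to codimension two, and the blow‑up step.} In a simple normal crossings pair, ``no two boundary components meet'' is equivalent to ``no codimension‑two stratum is a double intersection of boundary components'', since any higher intersection forces its pairwise intersections to be nonempty. So it suffices to destroy all codimension‑two double strata. Run the loop: while the current pair has a codimension‑two stratum $Z=A\cap B$ with $A,B$ both components of $\Gamma$, blow up $Z$. After this blow‑up: $\tilde A\cap\tilde B=\emptyset$; the only codimension‑two double stratum that is destroyed is $Z$ itself (for codimension reasons no other pair of strict transforms becomes empty), and every surviving double stratum keeps its defect; and the only new codimension‑two double strata have the shape $G\cap\tilde C$ for a $\Gamma$‑component $C$ with $C\cap Z\neq\emptyset$, each of which occurs only when $G$ is itself a component of $\Gamma$, i.e. only when $\delta(Z)=\alpha+\beta<1$, and then has defect $\delta(Z)+(\text{defect of }C)>\delta(Z)$.

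\textbf{Termination, and the main obstacle.} Attach to each stage the vector $I=(v_1,\dots,v_N,v_0)\in\mathbb Z_{\ge0}^{N+1}$, where $v_j$ (for $1\le j\le N$) counts the codimension‑two double strata of defect $d_j$ and $v_0$ counts those of defect $\ge 1$. If we blow up $Z$ with $\delta(Z)\ge 1$, then $Z$ leaves the count and nothing enters it, so only $v_0$ drops. If $\delta(Z)=d_j<1$, then $v_j$ drops by one while the only coordinates that can increase are $v_{j'}$ with $d_{j'}>d_j$ (or $v_0$), by the defect‑increase just observed. In either case $I$ strictly decreases in the lexicographic order, which is a well‑order on $\mathbb Z_{\ge0}^{N+1}$, so the loop stops after finitely many steps, at which point no two components of $\Gamma$ meet, as required. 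The geometry of each blow‑up is routine; the only real difficulty is choosing this invariant well. Inducting on $\dim X$, or on the maximal number of boundary components through a point, fails because blowing up a deepest stratum can create several new strata of the same codimension; restricting the centres to codimension two, where simple normal crossings forces an honest double intersection and the effect of the blow‑up is tightly controlled, and then exploiting the finiteness of $\mathcal D$, is exactly what makes the argument close.
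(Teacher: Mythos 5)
Your argument is correct: the local discrepancy computation, the reduction to killing codimension-two double strata, and the termination via a lexicographic invariant on the finite set of possible ``defects'' all check out. The paper offers no proof of its own here (it simply cites \cite[6.5]{HM07a}), and your argument is essentially the standard one from that reference, spelled out in full.
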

\begin{proof} This is standard, see for example \cite[6.5]{HM07a}.  
\end{proof}

\begin{lemma}\label{l_coefficients} Let $(X,\Delta)$ be a sub log canonical pair.  

We may find a finite set $I\subset (0,1]$ such that if $\pi\colon\map Y.X.$ is any
birational morphism and we write
\[
K_Y+\Gamma=\pi^*(K_X+\Delta) 
\]
then the coefficients of $\Gamma$ which are positive belong to $I$.
\end{lemma}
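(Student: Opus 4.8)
The plan is to reduce to the log smooth case and then read off the possible positive coefficients from the combinatorics of a simple normal crossings pair. First I would fix a log resolution $f\colon\map W.X.$ of $(X,\Delta)$ and write $K_W+\Gamma_W=f^*(K_X+\Delta)$, so that $(W,\Gamma_W)$ is log smooth, sub log canonical, and has only finitely many components; let $J\subset(-\infty,1]$ be the finite set of coefficients of $\Gamma_W$. Since discrepancies are invariant under crepant birational maps, for any prime divisor $E$ over $X$ its coefficient in the crepant pullback $K_Y+\Gamma=\pi^*(K_X+\Delta)$ — whenever $E$ appears on $Y$ — equals $-a(E,W,\Gamma_W)$. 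If the centre of $E$ on $W$ is a divisor then this number is either $0$ or a coefficient of $\Gamma_W$, hence lies in $J\cup\{0\}$. So it remains to bound $-a(E,W,\Gamma_W)$ when $E$ is exceptional over $W$ and this number is positive.

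Fix such an $E$, let $Z$ be its centre on $W$, and work over the generic point of $Z$. Let $D_1,\dots,D_k$ be the components of $\Gamma_W$ containing $Z$, with coefficients $b_1,\dots,b_k\in J$ and local equations $x_1,\dots,x_k$, and set $w_i=\ord_E(x_i)$, a positive integer. Since each $D_i$ is Cartier with equation $x_i$, the standard formula for discrepancies gives
\[
a(E,W,\Gamma_W)=a(E,W)-\sum_{i=1}^{k}b_iw_i,
\]
where $a(E,W)$ is a non-negative integer because $W$ is smooth, cf. \cite{KM98}. If $k=0$ then $a(E,W,\Gamma_W)=a(E,W)\geq 1$ as $E$ is exceptional over the smooth $W$, so $-a(E,W,\Gamma_W)<0$; hence we may assume $k\geq 1$.

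The crucial point — which I expect to require the most care — is that a \emph{positive} coefficient forces $E$ to be monomial in $x_1,\dots,x_k$. On the one hand $(W,D_1+\dots+D_k)$ is log smooth with all coefficients one, hence log canonical, so $a(E,W)-\sum_{i=1}^{k}w_i=a(E,W,\sum_{i=1}^{k}D_i)\geq -1$, that is $a(E,W)\geq\sum_{i=1}^{k}w_i-1$. On the other hand, if $c:=-a(E,W,\Gamma_W)>0$ then $a(E,W)<\sum_{i=1}^{k}b_iw_i\leq\sum_{i=1}^{k}w_i$. As $a(E,W)$ is an integer, these inequalities force $a(E,W)=\sum_{i=1}^{k}w_i-1$, and hence
\[
c=1-\sum_{i=1}^{k}(1-b_i)w_i.
\]

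To finish, set $\delta=\min\{\,1-b\mid b\in J,\ b\neq 1\,\}>0$. Indices with $b_i=1$ contribute nothing, so $c>0$ forces $\sum_{i\colon b_i\neq 1}(1-b_i)w_i<1$; this bounds the number of such indices, and each corresponding $w_i$, by $\delta^{-1}$, while the $b_i$ lie in the finite set $J$. Hence $c$ takes only finitely many values as $E$ varies, and every such value is at most $1$. Taking $I$ to be the union of this finite set with the set of positive elements of $J$ completes the proof.
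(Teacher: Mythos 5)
Your proof is correct. It shares the paper's overall architecture — pass to a log resolution, show every positive coefficient has the shape $1-\sum(1-b_i)w_i$ with the $b_i$ drawn from the finite coefficient set of the resolution, and conclude by the elementary finiteness of such sums (the paper's \eqref{l_finite}) — but the way you establish the key formula is genuinely different. The paper dominates $\pi$ by a tower of smooth blow-ups and inducts, computing the coefficient of each new exceptional divisor as $1+\sum a_i-k$ and implicitly checking that the candidate set is stable under iterating this operation through the tower. You instead treat a single arbitrary divisorial valuation $E$ over the fixed log resolution $W$, combining the identity $a(E;W,\Gamma_W)=a(E;W)-\sum b_iw_i$ with the lower bound $a(E;W)\ge\sum w_i-1$ coming from log canonicity of the reduced snc boundary $(W,\sum D_i)$; integrality of $a(E;W)$ then forces $a(E;W)=\sum w_i-1$ whenever the coefficient is positive. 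This buys you something: no factorization into smooth blow-ups, no induction, no need to track how the snc condition and the coefficient set evolve along the tower, and a transparent explanation of why only valuations monomial in the snc coordinates can contribute positive coefficients. (A minor remark: in degenerate cases such as $k=1$, $w_1=1$ with $E$ exceptional, your forced value $a(E;W)=0$ contradicts $a(E;W)\ge 1$, so no such $E$ with positive coefficient exists; this does not affect the argument since you only need the achievable positive values to lie inside the finite set you describe.) The concluding finiteness is the same combinatorics as \eqref{l_finite}, with the weights $w_i$ absorbed as repetitions.
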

\begin{proof} Replacing $(X,\Delta)$ by a log resolution we may assume that $(X,\Delta)$
is log smooth.  Let $J$ be the set of coefficients of $\Delta$ and let $I$ be the set
given by \eqref{l_finite}.

Supppose that $\pi\colon\map Y.X.$ is a birational morphism.  We may write
\[
K_Y+\Gamma=\pi^*(K_X+\Delta).
\]
We claim that the coefficients of $\Gamma$ which are positive belong to $I$.  Possibly
blowing up more we may assume that $\pi$ is a sequence of smooth blow ups.  If $Z\subset
X$ is smooth of codimension $k$ and $\llist a.k.$ are the coefficients of the components
of $\Delta$ containing $Z$ then the coefficient of the exceptional divisor is 
\[
a=1+\sum_{i\leq k}a_i-k.
\]
If $a>0$ then $a\in I$ and we are done by induction on the number of blow ups.  
\end{proof}

\begin{lemma}\label{l_up} Let $(X,\Delta)$ be a log smooth pair where the coefficients 
of $\Delta$ belong to $(0,1]$.  Suppose that there is a projective morphism
$\psi\colon\map X.U.$, where $U$ is an affine variety.

If $(X,\Delta)$ has a weak log canonical model then there is a sequence $\pi\colon\map
Y.X.$ of smooth blow ups of the strata of $\Delta$ such that if we write
\[
K_Y+\Gamma=\pi^*(K_X+\Delta)+E,
\]
where $\Gamma\geq 0$ and $E\geq 0$ have no common components, $\pi_*\Gamma=\Delta$ and
$\pi_*E=0$ and if we write 
\[
\Gamma'=\Gamma-\Gamma\wedge N_{\sigma}(Y,K_Y+\Gamma),
\]
then $\mathbf{B}_-(Y,K_Y+\Gamma')$ contains no strata of $\Gamma'$.  If $\Delta$ is
a $\mathbb{Q}$-divisor then $\Gamma'$ is a $\mathbb{Q}$-divisor.
\end{lemma}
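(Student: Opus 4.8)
The plan is to reduce to the situation, already prepared by \eqref{l_klt}, where no two components of $\Gamma$ intersect, and then argue that under this hypothesis the diminished base locus $\mathbf{B}_-(Y,K_Y+\Gamma')$ automatically avoids all strata of $\Gamma'$. First I would apply \eqref{l_klt} to the kawamata log terminal part: writing $\Delta=S+B$ with $S=\rfdown\Delta.$, the pair $(X,B)$ has $\rfdown B.=0$, so there is a sequence of smooth blow ups of strata of $(X,\Delta)$ (which refine strata of $(X,B)$) after which, passing to that model and relabelling, we may assume that no two components of the strict transform of $B$ intersect, while keeping track that the total space is still log smooth and that $(X,\Delta)$ still has a weak log canonical model (this last point survives because weak log canonical models pull back under log resolutions, cf. the argument in the proof of \eqref{l_vol}). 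The strata of $\Gamma'$ are then either components of $\rfdown\Gamma'.$, which are among the strict transforms of $S$ together with possibly exceptional divisors of coefficient one, or intersections thereof; the point of the blow up is that the only strata of positive dimension that can cause trouble are cut out by components of the coefficient-one locus.

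Next I would analyze $N_\sigma$ and $\Gamma'$ on this model. By \eqref{l_rational}(3), applied after noting that the existence of a weak log canonical model gives the existence of a minimal model for the relevant boundary, $N_\sigma(Y,K_Y+\Gamma)$ is a $\mathbb{Q}$-divisor when $\Delta$ is, and hence $\Gamma'=\Gamma-\Gamma\wedge N_\sigma(Y,K_Y+\Gamma)$ is a $\mathbb{Q}$-divisor, giving the final sentence of the statement. The divisor $\Gamma'$ is obtained from $\Gamma$ by lowering the coefficients of those components that meet $N_\sigma$ in high multiplicity; crucially $\Gamma'\leq\Gamma$ and $N_\sigma(Y,K_Y+\Gamma')=0$ along every prime divisor that was a component of $N_\sigma(Y,K_Y+\Gamma)$ — more precisely $\Gamma'$ is chosen exactly so that $K_Y+\Gamma'$ has no divisorial obstruction to being movable coming from the boundary. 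I would then pass to a minimal model $g\colon\rmap Y.Y'.$ of $(Y,\Gamma')$, which exists because $(Y,\Gamma)$ (hence $(Y,\Gamma')$, sandwiched between) has a weak log canonical model; by \eqref{l_only} the map $g$ contracts precisely components of $N_\sigma(Y,K_Y+\Gamma')$, and on $Y'$ the divisor $K_{Y'}+g_*\Gamma'$ is nef, so its diminished base locus is a union of $g$-exceptional divisors and subvarieties. Pulling back, $\mathbf{B}_-(Y,K_Y+\Gamma')$ is contained in the exceptional locus of $g$ together with the strict transform of $\mathbf{B}_-(Y',K_{Y'}+g_*\Gamma')$.

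The main obstacle is showing that no stratum of $\Gamma'$ can be swallowed by $\mathbf{B}_-$; this is where a further round of blow ups, tailored to the minimal model just produced, is needed. The idea is: take the sequence $\pi\colon\map Y.X.$ to include blow ups of all strata of $\Gamma$ that happen to lie in $\mathbf{B}_-$, and observe that blowing up a stratum $Z$ contained in $\mathbf{B}_-(Y,K_Y+\Gamma')$ strictly decreases $\sum_C\sigma_C$ over the new boundary in a way that terminates — this is the kind of Noetherian induction used in \cite[6.5]{HM07a} and in the proof of \eqref{l_klt}, now applied with the extra bookkeeping of the diminished base locus rather than mere incidence of components. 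Concretely, after the blow up the exceptional divisor $E_0$ over $Z$ acquires coefficient $\operatorname{mult}_Z(\Gamma')-\operatorname{codim}Z+1$ in $\Gamma'$, and because $Z\subset\mathbf{B}_-$ the value $\sigma_{E_0}(K_Y+\Gamma')$ is positive, so $E_0$ is a component of $N_\sigma$ and gets trimmed away in forming the new $\Gamma'$; iterating, every positive-dimensional stratum that met $\mathbf{B}_-$ is eventually extracted to a divisor that is then deleted, and no stratum of the final $\Gamma'$ survives inside $\mathbf{B}_-$. Termination of this process is the delicate point and I would handle it exactly as the termination in \eqref{l_klt}, by a dimension-and-multiplicity induction on the strata meeting the diminished base locus, using \eqref{l_def} and \eqref{l_commute} only insofar as they guarantee $N_\sigma$ behaves compatibly with the blow ups.
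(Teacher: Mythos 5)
Your overall strategy---blow up strata lying in the bad locus, note that the resulting exceptional divisor is trimmed away when passing from $\Gamma$ to $\Gamma'$, and iterate---is the right shape, and your formula $1+\mult_Z(\Gamma')-\operatorname{codim}Z$ for the coefficient of the new exceptional divisor is the correct log discrepancy computation. But there is a genuine gap exactly at the point you yourself flag as delicate: termination. You propose to handle it ``exactly as the termination in \eqref{l_klt}'', but that termination is a purely combinatorial fact about separating the finitely many pairwise intersections of the components of a fixed simple normal crossings divisor; it does not transfer here, because each blow up creates new strata and a new $\Gamma'$ whose diminished base locus may contain new strata, so a ``dimension-and-multiplicity induction'' has no well-founded invariant to descend on, and a priori the coefficients of the successive $\Gamma'$ could form an infinite strictly decreasing sequence in $(0,1]$. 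The ingredient that rules this out---and which your proposal never uses---is the weak log canonical model $f\colon\rmap X.W.$, $\Phi=f_*\Delta$, combined with \eqref{l_coefficients} and \eqref{l_finite}: on any model $Y$ obtained by blowing up strata, $\Gamma'$ is identified with the pushforward of the crepant boundary $\Psi$ of $(W,\Phi)$ (since $q^*(K_W+\Phi)$ is nef, no component of $\Psi$ lies in $N_\sigma$), so the coefficients of $\Gamma'$ always lie in a fixed finite set $I$ independent of the model. Since each blow up of a bad stratum produces a divisor whose coefficient in $\Gamma'$ is strictly smaller than the minimum of the coefficients of the components of $\Delta$ through that stratum, finiteness of $I$ forces the process to stop. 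Without this, or some substitute, the argument does not close.

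Two secondary problems. First, your assertion that $Z\subset\mathbf{B}_-(Y,K_Y+\Gamma')$ forces $\sigma_{E_0}(K_Y+\Gamma')>0$ is false in general: the diminished base locus can contain centres (for instance flipping-type loci) along which every divisorial valuation has $\sigma=0$; the paper instead blows up strata contained in the support of $N_\sigma$, where positivity of $\sigma_{E_0}$ does follow. Second, your intermediate passage to a minimal model of $(Y,\Gamma')$ is both unjustified---the hypothesis only provides a weak log canonical model, and existence of a minimal model is not free in this generality---and unnecessary, as is the preliminary application of \eqref{l_klt} to disjoin the components of the fractional part; the paper's proof runs no MMP at all. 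The final claim that $\Gamma'$ is a $\mathbb{Q}$-divisor does hold roughly as you indicate, but it also drops out of the finite-set argument, since $I$ consists of rational numbers when $\Delta$ is a $\mathbb{Q}$-divisor.
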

\begin{proof} Let $f\colon\rmap X.W.$ be a weak log canonical model of $(X,\Delta)$.  Let
$\Phi=f_*\Delta$.  Let $I$ be the finite set whose existence is guaranteed by
\eqref{l_coefficients} applied to $(W,\Phi)$.

Suppose that $\pi\colon\map Y.X.$ is a sequence of smooth blow ups of the strata of
$\Delta$.  We may write
\[
K_Y+\Gamma=\pi^*(K_X+\Delta)+E,
\]
where $\Gamma\geq 0$ and $E\geq 0$ have no common components, $\pi_*\Gamma=\Delta$ and
$\pi_*E=0$.  

Note that if $g\colon\rmap Y.W.$ is the induced birational map then $g$ is a weak log
canonical model of $(Y,\Gamma)$.  In particular if we write
\[
K_Y+\Gamma=g^*(K_W+\Phi)+E_1
\]
then $E_1=N_{\sigma}(Y,K_Y+\Gamma)$.  Thus if we write
\[
K_Y+\Gamma_0=g^*(K_W+\Phi)+E_0,
\] 
where $\Gamma_0$ and $E_0\geq 0$ have no common components, $g_*\Gamma_0=\Phi$ and $g_*E_0=0$
then 
\[
\Gamma_0=\Gamma'=\Gamma-\Gamma\wedge N_{\sigma}(Y,K_Y+\Gamma).
\]

Let $p\colon\map V.Y.$ and $q\colon\map V.W.$ resolve $g$, so that the strict transform of
$\Phi$ and the exceptional locus of $q$ has global normal crossings.  We may write
\[
K_V+\Psi=q^*(K_W+\Phi)+F,
\]
where $\Psi\geq 0$ and $F\geq 0$ have no common components, $q_*\Psi=\Phi$ and $q_*F=0$.
Note that the coefficients of $\Psi$ belong to $I$.

As $q^*(K_W+\Phi)$ is nef, $\Psi$ has no components in common with $N_{\sigma}(V,K_V+\Psi)=F$.  
As
\begin{align*} 
K_Y+p_*\Psi  &=g^*(K_W+\Phi)+p_*F\\
K_Y+\Gamma_0 &=g^*(K_W+\Phi)+E_0,
\end{align*} 
we have
\[
\Gamma_0+p_*F=p_*\Psi+E_0.
\]
As $\Gamma_0$ and $E_0$, $p_*\Psi$ and $p_*F$ have no common components, it follows that
$\Gamma'=p_*\Psi$, so that the coefficients of $\Gamma'$ belong to $I$.

Suppose that $Z$ is a strata of $(X,\Delta)$ which is contained in
$N_{\sigma}(X,K_X+\Delta)$.  Let $\pi\colon\map Y.X.$ blow up $Z$ and let $E$ be the
exceptional divisor.  The coefficient of $E$ in $\Gamma$ is no more than the minimum
coefficient of any component of $\Delta$ containing $Z$.  Either the coefficient of $E$ in
$\Gamma'$ is zero or $E$ is a component of $\Gamma-\Gamma'$, so that, either way, the
coefficient of $E$ in $\Gamma'$ is strictly less than the coefficient of any component of
$\Delta$ containing $Z$.  Since $I$ is a finite set and $(X,\Delta)$ has only finitely
many strata, it is clear that after finitely many blow ups we must have that no strata of
$(Y,\Gamma')$ is contained in $N_{\sigma}(Y,K_{Y}+\Gamma')$.
\end{proof}

\begin{lemma}\label{l_dlt} Let $(X,\Delta)$ be a log pair and let $\pi\colon\map X.U.$ be
a morphism of quasi-projective varieties.  Suppose that $U$ is smooth, $\pi$ is flat and
the support of $\Delta$ contains neither a component of a fibre nor a codimension one
singular point of any fibre.

Then the subset $U_0\subset U$ of points $u\in U$ such that the fibre $(X_u,\Delta_u)$ is
divisorially log terminal is constructible.  Further, if $U_0$ is dense in $U$ then we may
find a smooth dense open subset $U_1$ of $U$, contained in $U_0$, such that the
restriction of $(X,\Delta)$ to $U_1$ is divisorially log terminal.
\end{lemma}
\begin{proof} Let $V$ be a smooth open subset of the closure of $U_0$.  We may assume that
$V$ is irreducible.  Replacing $U$ by $V$ we may assume that $U_0$ is dense in $V$.

Let $f\colon\map Y.X.$ be a log resolution.  We may write
\[
K_Y+\Gamma=f^*(K_X+\Delta)+E,
\]
where $\Gamma\geq 0$ and $E\geq 0$ have no common components.  Passing to an open subset
of $U$ we may assume that $(Y,\Gamma)$ is log smooth over $U$.  As $\Gamma_u$ is a
boundary for a dense set of points $u\in U_0$, it follows that $\Gamma$ is a boundary.  

Suppose that $F$ is an exceptional divisor of log discrepancy zero with respect to
$(X,\Delta)$, that is, coefficient one in $\Gamma$.  Let $Z=f(F)$ be the centre of $F$ in
$X$.  Note that $F_u$ has log discrepancy zero with respect to $(X_u,\Delta_u)$, for any
$u\in U_0$.  As $(X_u,\Delta_u)$ is divisorially log terminal, it follows that
$(X_u,\Delta_u)$ is log smooth in a neighbourhood of the generic point of $Z_u$.  But then
$(X,\Delta)$ is log smooth in a neighbourhood of the generic point of $Z$ and so
$(X,\Delta)$ is divisorially log terminal.  This is the second statement.

As $(X,\Delta)$ is log smooth in a neighbourhood of the generic point of $Z$, we may find
an open subset $U_2\subset U_0$ such that if $u\in U_2$ then the $(X_u,\Delta_u)$ is log
smooth in a neighbourhood of the generic point of $Z_u$.  Possibly shrinking $U_2$ we may
also assume that the non kawamata log terminal locus of $(X_u,\Delta_u)$ is the
restriction of the non kawamata log terminal locus of $(X,\Delta)$.  It follows that if
$u\in U_2$ then $(X_u,\Delta_u)$ is divisorially log terminal.
\end{proof}

\subsection{Good minimal models}

\begin{lemma}\label{l_equivalent} Let $(X,\Delta)$ be a divisorially log terminal pair,
where $X$ is projective and $\mathbb{Q}$-factorial.  

If $(X,\Delta)$ has a weak log canonical model then the following are equivalent
\begin{enumerate} 
\item every weak log canonical model of $(X,\Delta)$ is a semi-ample model,
\item $(X,\Delta)$ has a semi-ample model, and
\item $(X,\Delta)$ has a good minimal model. 
\end{enumerate} 
\end{lemma}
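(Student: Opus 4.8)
The plan is to prove the cycle of implications $(1)\Rightarrow(2)\Rightarrow(3)\Rightarrow(1)$. The implication $(1)\Rightarrow(2)$ is immediate, since by hypothesis $(X,\Delta)$ admits a weak log canonical model and $(1)$ says any such model is semi-ample. The mechanism behind the other two implications is the standard fact that the data of a weak log canonical model is rigid. Suppose $f\colon\rmap X.Y.$ is a weak log canonical model of $(X,\Delta)$ and let $p\colon\map V.X.$, $q\colon\map V.Y.$ be a common resolution. Since $f$ is $(K_X+\Delta)$-non-positive we may write
\[
p^*(K_X+\Delta)=q^*(K_Y+\Gamma)+E,
\]
with $E\geq 0$ and $q$-exceptional. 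Then $E=N_{\sigma}(V,p^*(K_X+\Delta))$: the inequality $\leq$ holds because $q^*(K_Y+\Gamma)$ is the pullback of a nef class, and $\geq$ holds because an effective $q$-exceptional divisor cannot be moved within its numerical class on $V$ (cf. \cite{Nakayama04} and \cite[3.3.1]{BCHM10}, reducing from the big case to the nef one by adding $\epsilon\, p^*H$ with $H$ ample and letting $\epsilon\to 0$). In particular $E$ depends only on $(X,\Delta)$ and the resolution $p$, not on the chosen weak log canonical model $Y$. Note also that, $q$ being a birational morphism, $K_Y+\Gamma$ is semi-ample if and only if $q^*(K_Y+\Gamma)$ is: one has $H^0(V,\ring V.(q^*(m(K_Y+\Gamma))))=H^0(Y,\ring Y.(m(K_Y+\Gamma)))$, so base point freeness of the pulled back linear system is equivalent to that of the original.

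For $(3)\Rightarrow(1)$, let $g\colon\rmap X.Y'.$ be a good minimal model and let $f\colon\rmap X.Y.$ be an arbitrary weak log canonical model. Choose a common resolution $p\colon\map V.X.$, $q\colon\map V.Y.$, $r\colon\map V.Y'.$, and write
\[
p^*(K_X+\Delta)=q^*(K_Y+\Gamma)+E=r^*(K_{Y'}+\Gamma')+E'.
\]
A minimal model is in particular a weak log canonical model, so by the paragraph above $E=N_{\sigma}(V,p^*(K_X+\Delta))=E'$, whence $q^*(K_Y+\Gamma)=r^*(K_{Y'}+\Gamma')$. As $K_{Y'}+\Gamma'$ is semi-ample, so is its pullback $q^*(K_Y+\Gamma)$, and hence so is $K_Y+\Gamma$. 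Thus $f$ is a semi-ample model.

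For $(2)\Rightarrow(3)$, let $g\colon\rmap X.W.$ be a semi-ample model. Then $K_X+\Delta$ is pseudo-effective and $(X,\Delta)$ has a weak log canonical model, so, $(X,\Delta)$ being $\mathbb{Q}$-factorial divisorially log terminal, we may run the $(K_X+\Delta)$-MMP with scaling of an ample divisor; since a weak log canonical model exists, this MMP terminates with a minimal model $\phi\colon\rmap X.Y.$ (cf. \cite{HX11}, \cite{BCHM10}). By construction $Y$ is $\mathbb{Q}$-factorial, $\phi$ is a $(K_X+\Delta)$-negative birational contraction and $K_Y+\Gamma_Y$ is nef, so $Y$ is itself a weak log canonical model. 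Comparing the two weak log canonical models $Y$ and $W$ on a common resolution of $\phi$ and $g$, the argument of the first paragraph shows that the pullbacks of $K_Y+\Gamma_Y$ and of $K_W+\Gamma_W$ coincide; as $K_W+\Gamma_W$ is semi-ample, so is $K_Y+\Gamma_Y$. Hence $Y$ is a good minimal model and $(3)$ holds.

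The one step resting on substantial external input is the termination statement used in $(2)\Rightarrow(3)$: that the existence of a weak log canonical model forces the $(K_X+\Delta)$-MMP with scaling of an ample divisor to terminate, equivalently that $(X,\Delta)$ has a minimal model. Everything else is formal, built on the rigidity of $N_{\sigma}$ for weak log canonical models and on the fact that semi-ampleness descends along birational morphisms.
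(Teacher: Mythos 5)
Your implications $(1)\Rightarrow(2)$ and $(3)\Rightarrow(1)$ are fine, and your mechanism for $(3)\Rightarrow(1)$ --- identifying the $q$-exceptional part $E$ of $p^*(K_X+\Delta)$ over \emph{any} weak log canonical model with $N_{\sigma}(V,p^*(K_X+\Delta))$, so that the nef parts of any two weak log canonical models coincide on a common resolution --- is a correct and slightly more symmetric variant of the paper's argument, which instead compares a minimal model against a weak log canonical model and applies the negativity lemma twice, using that a minimal model is $(K_X+\Delta)$-\emph{negative} to see that every exceptional divisor of one map is exceptional for the other.

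The gap is in $(2)\Rightarrow(3)$, at the step ``since a weak log canonical model exists, this MMP terminates with a minimal model (cf.\ \cite{HX11}, \cite{BCHM10}).''  Neither reference gives this.  \cite{BCHM10} yields termination of the MMP with scaling when the pair is kawamata log terminal and the boundary (or the log canonical divisor) is big, which is not our situation; and the statement of \cite[2.9]{HX11} used elsewhere in this paper, like Birkar's termination-with-scaling theorem underlying it, takes as input the existence of a \emph{minimal} model, i.e.\ a $(K_X+\Delta)$-negative contraction to a nef model.  From $(2)$ you only have a weak log canonical model (a non-positive contraction), and the passage from a weak log canonical model to a minimal model --- equivalently, from a birational Zariski decomposition with nef positive part to an honest minimal model --- is not formal and is not known unconditionally in arbitrary dimension; it is essentially what this implication asks you to prove, so invoking termination on the grounds that a minimal model exists is circular.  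The paper avoids the absolute MMP entirely: after reducing, via a log resolution and the crepant invariance of \cite[2.10]{HX11}, to the case where the semi-ample model $g\colon\map X.Z.$ is a morphism, it runs the $(K_X+\Delta)$-MMP with scaling \emph{over} $Z$, equivalently the absolute MMP for the big divisor $K_X+\Delta+H$ with $H$ pulled back from an ample divisor on $Z$.  Over $Z$ one has $K_X+\Delta\sim_{\mathbb{R},Z}E$ with $E\geq 0$ exceptional, so this relative MMP is controlled by the negativity lemma: by \eqref{l_rational} it may be run until exactly the components of $N_{\sigma}(X,K_X+\Delta)$ are contracted, the induced morphism $h\colon\map Y.Z.$ is small, and $K_Y+\Gamma=h^*h_*(K_Y+\Gamma)$ is semi-ample.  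That relative reduction is the missing idea in your argument.
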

\begin{proof} (1) implies (2) is clear. 

We show that (2) implies (3).  Suppose that $g\colon\rmap X.Z.$ is a semi-ample model of
$(X,\Delta)$.  Let $p\colon\map W.X.$ be a log resolution of $(X,\Delta)$ which also
resolves $g$ so that the induced rational map is a morphism $q\colon\map W.Z.$.  We may
write
\[
K_W+\Phi=p^*(K_X+\Delta)+E,
\]
where $\Phi\geq 0$ and $E\geq 0$ have no common components, $p_*\Phi=\Delta$ and $p_*E=0$.
\cite[2.10]{HX11} implies that $(X,\Delta)$ has a good minimal model if and only if
$(W,\Phi)$ has a good minimal model.

Replacing $(X,\Delta)$ by $(W,\Phi)$ we may assume that $g$ is a morphism.  We run
$f\colon\rmap X.Y.$ the $(K_X+\Delta)$-MMP with scaling of an ample divisor over $Z$.
Note that running the $(K_X+\Delta)$-MMP over $Z$ is the same as running the absolute
$(K_X+\Delta+H)$-MMP, where $H$ is the pullback of a sufficiently ample divisor from $Z$.
Note also that $N_{\sigma}(X,K_X+\Delta)$ and $N_{\sigma}(X,K_X+\Delta+H)$ have the same
components.  By (2) of \eqref{l_rational} we may run the $(K_X+\Delta)$-MMP with scaling
over $Z$ until $f$ contracts every component of $N_{\sigma}(X,K_X+\Delta)$.  If
$\Gamma=f_*\Delta$ and $h\colon\map Y.Z.$ is the induced birational morphism then $h$ only
contracts divisor on which $K_Y+\Gamma$ is trivial.  As $h_*(K_Y+\Gamma)=g_*(K_X+\Delta)$
is semi-ample it follows that
\[
K_Y+\Gamma=h^*h_*(K_Y+\Gamma),
\]
is semi-ample and so $f$ is a good minimal model.  Thus (2) implies (3).

Suppose that $f\colon\rmap X.Y.$ is a minimal model and $g\colon\rmap X.Z.$ is a weak log
canonical model.  Let $p\colon\map W.Y.$ and $q\colon\map W.Z.$ be a common resolution
over $X$, $r\colon\map W.X.$.  Then we may write
\[
p^*(K_Y+\Gamma)+E_1=r^*(K_X+\Delta)=q^*(K_Z+\Phi)+E_2
\]
where $\Gamma=f_*\Delta$, $\Phi=g_*\Delta$, $E_1\geq 0$ is $p$-exceptional and $E_2\geq 0$
is $q$-exceptional.  As $f$ is a minimal model and $g$ is a weak log canonical model,
every $f$-exceptional divisor is $g$-exceptional.  Thus
\[
p^*(K_Y+\Gamma)+E=q^*(K_Z+\Phi),
\]
where $E=E_1-E_2$ is $q$-exceptional.  Negativity of contraction applied to $q$ implies
that $E\geq 0$, so that $E\geq 0$ is $p$-exceptional.  Negativity of contraction applied
to $p$ implies that $E=0$.  But then $K_Y+\Gamma$ is semi-ample if and only if $K_Z+\Phi$ is 
semi-ample.  Thus (3) implies (1).  
\end{proof}

\begin{lemma}\label{l_line} Let $(X,\Delta)$ be a divisorially log terminal pair, where 
$X$ is projective.  Let $A$ be an ample dvisor.  Let $\pi\colon\map V.X.$ be a
divisorially log terminal modification of $X$, so that $\pi$ is small and if we write
\[
K_V+\Sigma=\pi^*(K_X+\Delta),
\]
then $(V,\Sigma)$ is divisorially log terminal and $V$ is $\mathbb{Q}$-factorial.

If $(V,\Sigma)$ has a good minimal model then there is a constant $\epsilon>0$ with the
following properties:
\begin{enumerate} 
\item If $g_t\colon\rmap X.Z_t.$ is the log canonical model of $(X,\Delta+tA)$ then $Z_t$
is independent of $t\in (0,\epsilon)$ and there is a morphism $\map Z_t.Z_0.$.
\item If $h\colon\rmap X.Y.$ is a weak log canonical model of $(X,\Delta+tA)$ for some
$t\in [0,\epsilon)$ then $h$ is a semi-ample model of $(X,\Delta)$.
\end{enumerate} 
\end{lemma}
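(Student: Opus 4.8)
The plan is to use a minimal model program to produce one model $f\colon\rmap X.Y.$ which is simultaneously a minimal model of $(X,\Delta)$ and of $(X,\Delta+tA)$ for all small $t$, and then read off both statements from the geometry of $Y$. Since $(X,\Delta)$ has a good minimal model, $K_X+\Delta$ is pseudo-effective, so $K_X+\Delta+tA$ is big for every $t>0$; after replacing $A$ by a general member of its linear system we may assume $(X,\Delta+tA)$ is divisorially log terminal for $0\le t\ll 1$. Running the $(K_X+\Delta)$-MMP with scaling of $A$ — which terminates because $(X,\Delta)$ has a good minimal model, by \cite{BCHM10} (cf. \cite{HX11}) — yields a good minimal model $f\colon\rmap X.Y.$ of $(X,\Delta)$, the output being good by \eqref{l_equivalent}; writing $\Gamma=f_*\Delta$ and $B=f_*A$, the usual properties of the MMP with scaling give a constant $\epsilon>0$ such that $f$ is a minimal model of $(X,\Delta+tA)$ for every $t\in[0,\epsilon)$ while both $K_Y+\Gamma$ and $K_Y+\Gamma+\epsilon B$ are nef. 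For $t\in(0,\epsilon)$ the divisor $K_Y+\Gamma+tB$ is then nef and big (since $\vol(Y,K_Y+\Gamma+tB)=\vol(X,K_X+\Delta+tA)>0$), hence semi-ample by \eqref{t_bpf}; let $\phi_t\colon\map Y.Z_t.$ be the associated contraction, so that $\phi_t\circ f$ is the log canonical model of $(X,\Delta+tA)$, and let $\psi\colon\map Y.Z_0.$ be the contraction attached to the semi-ample divisor $K_Y+\Gamma$, so that $Z_0$ is the log canonical model of $(X,\Delta)$.

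For part (1) the crucial step is numerical: for a curve $C$ on $Y$ and $t\in(0,\epsilon)$, nefness of $K_Y+\Gamma$ and of $K_Y+\Gamma+\epsilon B$ together with $0<t<\epsilon$ shows that $(K_Y+\Gamma+tB)\cdot C=0$ holds exactly when $(K_Y+\Gamma)\cdot C=0$ and $B\cdot C=0$. Hence the set of curves contracted by $\phi_t$ is independent of $t\in(0,\epsilon)$, so $Z_t$ is independent of $t$; and since each such curve has $(K_Y+\Gamma)\cdot C=0$ it is contracted by $\psi$, so $\psi$ factors through $\phi_t$, producing the required morphism $\map Z_t.Z_0.$.

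For part (2) the case $t=0$ is immediate from \eqref{l_equivalent}. So let $t\in(0,\epsilon)$ and let $h\colon\rmap X.Y'.$ be a weak log canonical model of $(X,\Delta+tA)$, with $\Gamma'=h_*\Delta$ and $B'=h_*A$; pick a common resolution $r\colon\map W.X.$ of $f$ and $h$, with $\alpha\colon\map W.Y.$ and $\beta\colon\map W.Y'.$. One must show that $h$ is $(K_X+\Delta)$-non-positive and that $K_{Y'}+\Gamma'$ is nef and semi-ample; then $h$ is a semi-ample model of $(X,\Delta)$. The first is easy: writing $r^*(K_X+\Delta+tA)=\beta^*(K_{Y'}+\Gamma'+tB')+E_2$ with $E_2\ge 0$ $\beta$-exceptional, negativity of contraction (using that $A$ is ample and $B'=h_*A$) gives $\beta^*B'-r^*A\ge 0$ and $\beta$-exceptional, so $r^*(K_X+\Delta)=\beta^*(K_{Y'}+\Gamma')+\bigl(E_2+t(\beta^*B'-r^*A)\bigr)$. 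For the second, $f$ being a minimal model and $h$ a weak log canonical model of $(X,\Delta+tA)$, every $f$-exceptional divisor is $h$-exceptional, and the comparison argument in the proof of \eqref{l_equivalent} gives $\alpha^*(K_Y+\Gamma+tB)=\beta^*(K_{Y'}+\Gamma'+tB')$; granting the identity $\alpha^*B=\beta^*B'$ and subtracting $t$ times it yields $\alpha^*(K_Y+\Gamma)=\beta^*(K_{Y'}+\Gamma')$, which is nef and semi-ample because $K_Y+\Gamma$ is, so $K_{Y'}+\Gamma'$ is nef and semi-ample after pushing down.

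The main obstacle is the identity $\alpha^*B=\beta^*B'$. The argument I have in mind is: $G:=\alpha^*B-\beta^*B'$ satisfies $\beta_*G=0$, and since every $\alpha$-exceptional divisor on $W$ is $\beta$-exceptional (because $f$-exceptional divisors are $h$-exceptional) $G$ is $\beta$-exceptional; moreover, for a curve $C$ contracted by $\beta$ the equality $\alpha^*(K_Y+\Gamma+tB)=\beta^*(K_{Y'}+\Gamma'+tB')$ forces $(K_Y+\Gamma+tB)\cdot\alpha_*C=0$, so $\alpha_*C$ is contracted by $\phi_t$ and $B\cdot\alpha_*C=0$ by part (1), whence $G\cdot C=0$; thus $G$ is numerically trivial over $\beta$ and negativity of contraction forces $G=0$. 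In substance this says that a weak log canonical model of $(X,\Delta+tA)$, for small $t$, cannot contract strictly more than what is needed for $(X,\Delta)$; keeping the bookkeeping of exceptional divisors and the use of part (1) exactly right is the delicate point. A secondary technical point is the very first step, the termination of the $(K_X+\Delta)$-MMP with scaling of $A$ for a pair known only to possess a good minimal model.
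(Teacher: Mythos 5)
Your part (1) is essentially the paper's own argument: run the $(K_X+\Delta)$-MMP with scaling of $A$ (termination is \cite[2.9]{HX11}), and use convexity of the nef cone on the resulting model to see that a curve is $(K_Y+\Gamma+tB)$-trivial for one $t\in(0,\epsilon)$ if and only if it is both $(K_Y+\Gamma)$-trivial and $B$-trivial; this gives the constancy of $Z_t$ and the factorisation $\map Z_t.Z_0.$. (A minor point: the semi-ampleness of $K_Y+\Gamma+tB$ for $t>0$ is not an application of \eqref{t_bpf}, whose hypothesis on $\rfdown\Delta.$ you have not verified; it comes from \cite{BCHM10} after perturbing $\Delta+tA$ to a big klt boundary.)

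In part (2) you diverge from the paper, and there is a genuine gap. A weak log canonical model $h\colon\rmap X.Y'.$ of $(X,\Delta+tA)$ is, in this paper's conventions, not assumed $\mathbb{Q}$-factorial: the only divisor on $Y'$ known to be $\mathbb{R}$-Cartier is $K_{Y'}+\Gamma'+tB'$. Consequently $B'=h_*A$ and $K_{Y'}+\Gamma'$ need not individually be $\mathbb{R}$-Cartier, and every occurrence of $\beta^*B'$ and $\beta^*(K_{Y'}+\Gamma')$ in your argument --- the \lq\lq easy\rq\rq\ negativity step giving $\beta^*B'-r^*A\geq 0$, the definition of $G$, and the final subtraction $\alpha^*(K_Y+\Gamma)=\beta^*(K_{Y'}+\Gamma')$ --- is undefined as written. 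This is precisely the point the paper's proof is organised around: it first notes that $h$ is automatically a semi-ample model of the big pair $(X,\Delta+tA)$, so there is a morphism $\psi\colon\map Y'.Z.$ to the common ample model $Z=Z_t$ of part (1); it then shows $\psi$ is \emph{small}, using \eqref{l_rational} and the observation that a divisor $P$ not in $N_{\sigma}(X,K_X+\Delta)$ has $(K_X+\Delta+tA)|_P$ big, so $h$ contracts exactly the components of $N_{\sigma}(X,K_X+\Delta)$; smallness then lets one write $K_{Y'}+\Gamma'+sB'=\psi^*(K_Z+\Psi+sC)$ with the right side ample for all $s\in(0,\epsilon)$, which simultaneously supplies the missing $\mathbb{R}$-Cartier statements and the nefness of $K_{Y'}+\Gamma'$ in the limit $s\to 0$. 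Your computation can be repaired in the same spirit --- for instance, $\alpha^*(K_Y+\Gamma)$ is semi-ample and, by your part-(1) convexity argument, trivial on every $\beta$-contracted curve, so by rigidity it descends to a semi-ample $\mathbb{R}$-Cartier divisor on $Y'$ equal to $K_{Y'}+\Gamma'$, which also makes the identity $\alpha^*B=\beta^*B'$ unnecessary --- but as it stands the central objects of your part (2) do not exist.
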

\begin{proof} Note that as $A$ is ample, $(X,\Delta+tA)$ has a log canonical model $Z_t$
for $t>0$ by \cite[1.1]{BCHM10}.  Note also that since $\pi$ is small, $V$ and $X$ have
the same log canonical models and weak log canonical models.  At the expense of dropping
the hypothesis that $A$ is ample, replacing $X$ by $V$ we may assume that $X$ is
$\mathbb{Q}$-factorial.

Suppose that we run $f_t\colon\rmap X.W_t.$ the $(K_X+\Delta)$-MMP with scaling of $A$.
\cite[1.9.iii]{Birkar12b} implies that this MMP terminates with a minimal model, so that
we may find $\epsilon>0$ such that $f=f_0=f_t\colon\rmap X.W=W_t.$ is independent of
$t\in [0,\epsilon)$.  Let $\Phi=f_*\Delta$ and let $B=f_*A$.  If $C\subset W$ is a curve
such that $(K_W+\Phi+sB)\cdot C=0$ for some $s\in (0,\epsilon)$, then
\[
(K_W+\Phi+tB)\cdot C=0 \qquad \text{for all} \qquad t\in [0,\epsilon),
\]
since $K_W+\Phi+\lambda B$ is nef for all $\lambda\in (0,\epsilon)$.  The induced
contraction morphism $\map W.Z_t.$ to the ample model contracts those curves $C$ such that
$(K_W+\Phi+tB)\cdot C=0$ so that $Z=Z_t$ is independent of $t\in (0,\epsilon)$ and there
is a contraction morphism $\map Z_t.Z_0.$.  This is (1).

Let $h\colon\rmap X.Y.$ be a weak log canonical model of $(X,\Delta+tA)$.  Then $h$ is a
semi-ample model of $(X,\Delta+tA)$ and there is an induced morphism $\psi\colon \map
Y.Z.$.

Possibly replacing $\epsilon$ with a smaller number, \eqref{l_weak} implies that $h$
contracts every component of $N_{\sigma}(X,K_X+\Delta)$, independently of the choice of
weak log canonical model.  Note that if $P$ is a prime divisor which is not a component of
$N_{\sigma}(X,K_X+\Delta)$ then the restriction to $P$, of the birational map associated
to some multiple of $K_X+\Delta+tA$, is birational.  In particular \eqref{l_weak} implies
that $h$ does not contract $P$.  Thus $h$ contracts the components of
$N_{\sigma}(X,K_X+\Delta)$ and no other divisors.  Since $Z$ is a log canonical model of
$(X,\Delta +tA)$, then $\rmap X.Z.$ also contracts the components of
$N_{\sigma}(X,K_X+\Delta)$ and no other divisors.  It follows that $\psi$ is a small
morphism.

If $\Gamma=h_*\Delta$, $B=h_*A$, $\Psi=\psi_*\Gamma$ and $C=\psi_*B$ then 
\[
K_Y+\Gamma+sB=\psi^*(K_Z+\Psi+sC),
\]
for any $s$.  By assumption $K_Z+\Psi+sC$ is ample for $s\in (0,\epsilon)$ and 
so $K_Y+\Gamma+sB$ is nef for $s\in (0,\epsilon)$.  Thus $K_Y+\Gamma$ is nef and 
so $h$ is a semi-ample model of $(X,\Delta)$ by \eqref{l_equivalent}.  
\end{proof}

\begin{lemma}\label{t_non-closed} Let $k$ be any field of characteristic zero 
and let $(X,\Delta)$ be a log pair over $k$, where $X$ is a projective variety.  Let
$(\bar X,\bar \Delta)$ be the corresponding pair over the algebraic closure $\bar k$ of
$k$.  Assume that $(\bar X,\bar \Delta)$ is divisorially log terminal and $\bar X$ is
$\mathbb{Q}$-factorial.

Then $(X,\Delta)$ has a good minimal model if and only if $(\bar X,\bar \Delta)$ has a
good minimal model.
\end{lemma}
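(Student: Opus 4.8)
The plan is to prove the implication from $k$ to $\bar k$ by flat base change, and the converse by running the minimal model program over $k$ and checking that it terminates after passing to $\bar k$.

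For the first implication, given a good minimal model $f\colon\rmap X.Y.$ over $k$ I would base change a common resolution of $f$ along the flat morphism $\map \sp\bar k.\sp k.$. Since $k$ has characteristic zero it is perfect, so $\bar X$ and $\bar Y$ are normal; $(\bar X,\bar\Delta)$ is divisorially log terminal and $\mathbb{Q}$-factorial by hypothesis, and $\bar Y$ inherits these properties. The relation $p^*(K_X+\Delta)=q^*(K_Y+\Gamma)+E$ exhibiting $f$ as a $(K_X+\Delta)$-negative birational contraction is preserved by base change, and global generation of $\ring Y.(m(K_Y+\Gamma))$, for $m$ sufficiently divisible, is preserved by the flat base change $\map \sp\bar k.\sp k.$; hence $f_{\bar k}$ is a good minimal model of $(\bar X,\bar\Delta)$.

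For the converse, note first that since $(\bar X,\bar\Delta)$ has a good minimal model, $K_{\bar X}+\bar\Delta$ is pseudo-effective, and hence so is $K_X+\Delta$ — bigness, and therefore pseudo-effectivity, is detected by $h^0$, which commutes with base change. Choose an ample $\mathbb{Q}$-divisor $A$ with $(X,\Delta+A)$ divisorially log terminal and $K_X+\Delta+A$ nef, and run the $(K_X+\Delta)$-MMP with scaling of $A$ over $k$; each flip and divisorial contraction exists because the results of \cite{BCHM10} hold over any field of characteristic zero. The key claim is that this program terminates. A step over $k$ contracts a $(K_X+\Delta)$-negative extremal ray $R$, which is defined over a finite extension of $k$; since flips and extremal contractions are canonical they commute with base change, so over $\bar k$ the step becomes the contraction, or flip, of the $(K_{\bar X}+\bar\Delta)$-negative extremal face spanned by the finite $\operatorname{Gal}(\bar k/k)$-orbit of $R$, on which $K_{\bar X}+\bar\Delta+\lambda A_{\bar k}$ is trivial for the relevant scaling value. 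Contracting this face factors into finitely many single-ray contractions and flips, which together form a $(K_{\bar X}+\bar\Delta)$-MMP with scaling of $A_{\bar k}$; since $(\bar X,\bar\Delta)$ has a good minimal model, this program terminates by \cite[2.9]{HX11}. As each step over $k$ contributes at least one step to the program over $\bar k$, the program over $k$ terminates as well, with a minimal model $f\colon\rmap X.Y.$ of $(X,\Delta)$. Finally, $Y_{\bar k}$ is then a minimal model of $(\bar X,\bar\Delta)$; comparing it with a good minimal model of $(\bar X,\bar\Delta)$ over a common resolution and invoking negativity of contraction, as in the last step of the proof of \eqref{l_equivalent}, shows that $K_{Y_{\bar k}}+\bar\Gamma$ is semi-ample, and semi-ampleness descends along the faithfully flat morphism $\map \sp\bar k.\sp k.$.

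The step I expect to be the main obstacle is the factorisation, over $\bar k$, of a single step of the program over $k$ into finitely many steps of a genuine minimal model program: this requires $\mathbb{Q}$-factoriality to be preserved by the base change $\map \sp\bar k.\sp k.$ for the varieties occurring in the program, so that the intermediate varieties obtained by contracting the rays of a $\operatorname{Gal}(\bar k/k)$-orbit one at a time remain $\mathbb{Q}$-factorial. The descent of semi-ampleness and of pseudo-effectivity are routine applications of faithfully flat descent.
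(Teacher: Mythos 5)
Your forward direction is essentially the paper's and is fine. For the converse you take a genuinely different route, and its central step has a real gap. You want to deduce termination of the $(K_X+\Delta)$-MMP with scaling of $A$ over $k$ by showing that each step, after base change to $\bar k$, factors into finitely many steps of a $(K_{\bar X}+\bar\Delta)$-MMP with scaling of $\bar A$, and you flag preservation of $\mathbb{Q}$-factoriality as the main obstacle. The more serious obstacle is elsewhere. If the extremal ray $R$ over $k$ pulls back to the face spanned by a Galois orbit $R_1,\dots,R_m$ of rays over $\bar k$, then after flipping $R_1$ the strict transforms of curves generating $R_2,\dots,R_m$ may meet the flipped locus, so their intersection numbers with $K_{\bar X}+\bar\Delta$ and with the transform of $\bar A$ change; it is not clear that they still span $(K+\Delta)$-negative extremal rays on which $K+\Delta+\lambda \bar A$ vanishes for the current nef threshold $\lambda$, which is exactly what is needed for the intermediate contractions to be steps of the MMP \emph{with scaling of $\bar A$}, so that the termination statement \cite[2.9]{HX11} applies to them. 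Similarly, a divisorial contraction over $k$ contracts a single prime divisor over $k$ whose base change is a union of several prime divisors over $\bar k$, and it is not immediate that this map decomposes into single-ray divisorial contractions. None of this is false in spirit --- such decompositions can be arranged, for instance by running a relative MMP over the base of the face contraction --- but as written the claim is unproved, and it carries the entire weight of your termination argument.

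The paper sidesteps all of this. It runs the same MMP with scaling over $k$, but instead of analysing individual steps over $\bar k$ it uses only the formal fact that a weak log canonical model of $(X,\Delta+tA)$ base changes to a weak log canonical model of $(\bar X,\bar\Delta+t\bar A)$: the relation $p^*(K_X+\Delta+tA)=q^*(K_Y+\Gamma+tB)+E$ with $E\geq 0$ exceptional, and the nefness of $K_Y+\Gamma+tB$, are both preserved by base change. Then (2) of \eqref{l_line}, applied to $(\bar X,\bar\Delta)$ and $\bar A$, produces an $\epsilon>0$ such that as soon as the scaling value satisfies $t<\epsilon$ the base-changed model is already a semi-ample model of $(\bar X,\bar\Delta)$; hence $K_{\bar Y}+\bar\Gamma$ is semi-ample, and semi-ampleness descends. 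In particular one never needs the MMP over $k$ to reach scaling value $0$, nor any compatibility of extremal rays or of individual MMP steps with base change. I would recommend either adopting this argument or, if you wish to keep yours, supplying a genuine proof that the base change of a step of the MMP with scaling over $k$ is a composition of steps of the MMP with scaling over $\bar k$.
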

\begin{proof} If $W$ is a scheme over $k$ then $\bar W$ denotes the corresponding scheme
over $\bar k$.  If $f\colon\rmap X.Y.$ is a good minimal model of $(X,\Delta)$ then
$\bar f\colon\rmap \bar X.\bar Y.$ is a semi-ample model of $(\bar X,\bar \Delta)$ and so
$(\bar X,\bar \Delta)$ has a good minimal model by \eqref{l_equivalent}.

Conversely suppose that $(\bar X,\bar \Delta)$ has a good minimal model.  Pick an ample
divisor $A$ on $X$.  We run $f\colon\rmap X.Y.$ the $(K_X+\Delta)$-MMP with scaling of
$A$.  Then $f$ is a weak log canonical model of $(X,\Delta+tA)$ and so $\bar f\colon\rmap
\bar X.\bar Y.$ is a weak log canonical model of $(\bar X,\bar\Delta+t\bar A)$.
\eqref{l_line} implies that we may find $\epsilon>0$ such that $\bar f$ is a semi-ample
model of $(\bar X,\bar \Delta)$ for $t\in [0,\epsilon)$.  If $\Gamma=f_*\Delta$ then $K_{\bar
  Y}+\bar\Gamma$ is semi-ample so that $K_Y+\Gamma$ is semi-ample.  But then $f$ is a good
minimal model of $(X,\Delta)$.
\end{proof}

\makeatletter
\renewcommand{\thetheorem}{\thesection.\arabic{theorem}}
\@addtoreset{theorem}{section}
\makeatother
\section{The MMP in families I}

\begin{lemma}\label{l_central} Let $(X,\Delta)$ be a divisorially log terminal pair and let 
$\pi\colon\map X.U.$ be a projective contraction morphism, where $U$ is smooth, affine, of
dimension $k$ and $X$ is $\mathbb{Q}$-factorial.  Let $0\in U$ be a closed point such that
\begin{enumerate}
\item there are $k$ divisors $\llist D.k.$ containing $0$ such that if $H_i=\pi^*D_i$ and
$H=\alist H.+.k.$ is the sum then $(X,H+\Delta)$ is divisorially log terminal, 
\item $X_0$ is integral, $\dim X_0=\dim X-\dim U$ and $\dim V_0=\dim V-\dim U$,
for all non canonical centres $V$ of $(X,\Delta)$, and
\item $\mathbf{B}_-(X_0,K_{X_0}+\Delta_0)$ contains no non canonical centres of $(X_0,\Delta_0)$.
\end{enumerate}

Let $f\colon\rmap X.Y.$ be a step of the $(K_X+\Delta)$-MMP.  If $f$ is birational and $V$
is a non canonical centre of $(X,\Delta)$ then $f$ is an isomorphism in a neighborhood of
the generic point of $V$ and $f_0$ is an isomorphism in a neighborhood of the generic
point of $V_0$.  In particular the induced maps $\phi\colon\rmap V.W.$ and
$\phi_0\colon\rmap V_0.W_0.$ are birational, where $W=f(V)$.  Let $\Gamma=f_*\Delta$.
Further
\begin{enumerate}
\item if $G_i$ is the pullback of $D_i$ to $Y$ and $G=\alist G.+.k.$ is the sum then
$(Y,G+\Gamma)$ is divisorially log terminal, 
\item $Y_0$ is integral, $\dim Y_0=\dim Y-\dim U$ and $\dim W_0=\dim W-\dim U$,
for all non canonical centres $W$ of $(Y,\Gamma)$, and
\item $\mathbf{B}_-(Y_0,K_{Y_0}+\Gamma_0)$ contains no non canonical centres of $(Y_0,\Gamma_0)$.  
\end{enumerate}
If $V$ is a non kawamata log terminal centre, or $V=X$ then $\phi\colon\rmap V.W.$ and
$\phi_0\colon\rmap V_0.W_0.$ are birational contractions.

On the other hand, if $f$ is a Mori fibre space then $f_0$ is not birational.
\end{lemma}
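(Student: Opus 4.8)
The plan is to deduce the statement from elementary considerations on the dimensions of fibres; in contrast with the birational case treated above, hypotheses (1)--(3) play no role. There is a morphism $\pi_Y\colon\map Y.U.$ with $\pi_Y\circ f=\pi$ and $Y_0=\pi_Y^{-1}(0)$ --- this is built into the set-up, since it is already needed to state conclusions (1)--(3) above. When $f$ is a Mori fibre space it is a surjective morphism $f\colon\map X.Y.$ (not merely a birational map), with connected fibres and $\dim Y<\dim X$. I would first observe that $f^{-1}(Y_0)=X_0$: if $f(x)=y\in Y_0$ then $\pi(x)=\pi_Y(y)=0$, so $x\in X_0$, while conversely $\pi_Y(f(X_0))=\pi(X_0)=\{0\}$ shows $f(X_0)\subseteq Y_0$. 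Hence $f$ restricts to a surjective morphism $f_0\colon\map X_0.Y_0.$, and for every closed point $y\in Y_0$ the fibre $f_0^{-1}(y)$ coincides with $f^{-1}(y)$.

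The heart of the matter is then immediate from the theorem on the dimension of the fibres of a dominant morphism of irreducible varieties: every non-empty fibre of $f\colon\map X.Y.$ has dimension at least $\dim X-\dim Y$, which is positive because $f$ is a Mori fibre space. Combined with the identification $f_0^{-1}(y)=f^{-1}(y)$, it follows that every fibre of $f_0$ over a closed point of $Y_0$ has positive dimension; in particular the general fibre of $f_0$ is positive-dimensional, and so $f_0$ is not birational. (When $X_0$ is irreducible this is just the inequality $\dim X_0>\dim Y_0$.)

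I do not expect a real obstacle here: this part of the lemma is much softer than the birational statement. The only point that needs a little care is the bookkeeping in the first step --- that $f$ really is a morphism over $U$, so that $Y_0$ and the restriction $f_0$ are defined --- which is part of the standing framework in which the MMP is run in this section.
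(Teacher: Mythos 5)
Your proposal establishes only the very last sentence of the lemma (that $f_0$ is not birational when $f$ is a Mori fibre space), and for that part your argument is correct and essentially the paper's: the paper invokes upper semicontinuity of fibre dimension, you invoke the lower bound $\dim f^{-1}(y)\geq \dim X-\dim Y>0$ together with $f^{-1}(Y_0)=X_0$; these are the same soft observation. The problem is that this is the least substantial claim in the statement. You open with ``in contrast with the birational case treated above,'' but no such treatment appears anywhere in your proposal: the entire birational case --- that a non canonical centre $V$ avoids the indeterminacy locus of $f$, that $V_0$ avoids the indeterminacy locus of $f_0$, that $\phi$ and $\phi_0$ are birational, that conditions (1)--(3) persist on $(Y,G+\Gamma)$, and that for non kawamata log terminal centres the induced maps are birational contractions --- is simply absent. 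That is the content of the lemma, and it is exactly where hypotheses (1)--(3) are used.

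For the record, none of that part is dimension bookkeeping. The key step is: if $g\colon\map X.Z.$ is the extremal contraction associated to $f$ and $Q=g(V)$, then hypothesis (3) says $V_0$ is not contained in $\mathbf{B}_-(X_0,K_{X_0}+\Delta_0)$, which forces the induced morphism $\map V_0.Q_0.$ to be birational; one then uses that $Q$ is irreducible and dominates $U$ together with upper semicontinuity of fibre dimension to conclude that $\map V.Q.$ is birational, whence $V$ and $V_0$ avoid the indeterminacy loci. Preservation of (1) uses that $f$ is also a step of the $(K_X+H+\Delta)$-MMP since $H$ is pulled back from $U$, and irreducibility of $Y_0$ follows because each component of $Y_0$ is a non kawamata log terminal centre of $(Y,G)$. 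The birational contraction claim for non kawamata log terminal centres requires an adjunction/discrepancy comparison: a divisor $P$ on $W_0$ extracted by $\phi_0^{-1}$ would have $a(P;V_0,\Sigma_0)<a(P;W_0,\Theta_0)\leq 1$, so its centre on $V_0$ is a non canonical centre, hence birational to $P$ and therefore already a divisor. Without these arguments the proof is incomplete.
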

\begin{proof} Suppose that $f$ is birational.  

As $f$ is a step of the $(K_X+\Delta)$-MMP and $H$ is pulled back from $U$, it follows
that it is also a step of the $(K_X+H+\Delta)$-MMP, and so $(Y,G+\Gamma)$ is divisorially
log terminal.  As every component of $Y_0$ is a non kawamata log terminal centre of
$(Y,G)$ and $X_0$ is irreducible, it follows that $Y_0$ is irreducible.

Let $V$ be a non canonical centre of $(X,\Delta)$.  Then $V$ is a non canonical centre of
$(X,H+\Delta)$.  Let $g\colon\map X.Z.$ be the contraction of the extremal ray associated
to $f$ (so that $f=g$ unless $f$ is a flip).  Every component of $V_0$ is a non-canonical
centre of $(X_0,\Delta_0)$ \cite[1.4.5]{BCHM10} and so no component of $V_0$ is contained
in $\mathbf{B}_-(X_0,K_{X_0}+\Delta_0)$ by hypothesis.  On the other hand, note that the
locus where $g$ is not an isomorphism is the locus of curves $C$ such that
$(K_X+H+\Delta)\cdot C<0$.  Thus the locus where $g_0$ is not an isomorphism is equal to
the locus of curves $C_0\subset X_0$ such that $(K_{X_0}+\Delta_0)\cdot C_0<0$.  As every
such curve $C_0$ is contained in $\mathbf{B}_-(X_0,K_{X_0}+\Delta_0)$ it follows that the
locus where $g_0$ (respectively $g$) is not an isomorphism intersects $V_0$ (respectively
$V$) in a proper closed subset.  In particular both $\phi\colon\rmap V.W.$ and
$\phi_0\colon\rmap V_0.W_0.$ are birational.

Now suppose that $V$ is a non kawamata log terminal centre or $V=X$.  If $V$ is a non
kawamata log terminal centre then $V$ is a non canonical centre and so $\phi\colon\rmap
V.W.$ and $\phi_0\colon\rmap V_0.W_0.$ are both birational.  We can define divisors
$\Sigma_0$ and $\Theta_0$ on $V_0$ and $W_0$ by adjunction:
\[
(K_{X_0}+\Delta_0)|_{V_0}=K_{V_0}+\Sigma_0.\qquad \text{and} \qquad (K_{Y_0}+\Gamma_0)|_{W_0}=K_{W_0}+\Theta_0.
\]
If $P$ is a divisor on $W_0$ and $f$ is not an isomorphism at the generic point of the
centre $N$ of $P$ on $V_0$ then
\[
a(P; V_0,\Sigma_0) < a(P; W_0,\Theta_0)\leq 1.
\] 
Thus $N$ is a non-canonical centre of $(X,\Delta)$.  Therefore $N$ is birational to $P$ so
that $N$ is a divisor on $V_0$.  Thus $\phi_0\colon\rmap V_0.W_0.$ is a birational
contraction.  In particular $f_0\colon\rmap X_0.Y_0.$ is a birational contraction and so
(1--3) clearly hold.  As $\phi_0\colon\rmap V_0.W_0.$ is a birational contraction it
follows that $\phi\colon\rmap V.W.$ is a birational contraction in a neighborhood of $V_0$.

Suppose that $f$ is a Mori fibre space.  As the dimension of the fibres of $f\colon\map
X.Y.$ are upper-semicontinuous, $f_0$ is not birational.
\end{proof}

\begin{lemma}\label{l_generic-minimal}  Let $(X,\Delta)$ be a divisorially log terminal pair
and let $\pi\colon\map X.U.$ be a projective morphism, where $U$ is smooth and affine and
$X$ is $\mathbb{Q}$-factorial.  Let $\eta \in U$ be the generic point and let $0\in U$ be
a closed point.  Suppose that either 
\begin{enumerate}
\item there are $k$ divisors $\llist D.k.$ containing $0$ such that if $H_i=\pi^*D_i$ and
$H=\alist H.+.k.$ is the sum then $(X,H+\Delta)$ is divisorially log terminal, 
\item $X_0$ is integral, $\dim X_0=\dim X-\dim U$ and $\dim V_0=\dim V-\dim U$, for all
non canonical centres $V$ of $(X,\Delta)$, and 
\item $\mathbf{B}_-(X_0,K_{X_0}+\Delta_0)$ contains no non canonical centres of $(X_0,\Delta_0)$.
\end{enumerate}
or $(X,\Delta)$ is log smooth over $U$ and (3) holds.  

If $(X_0,\Delta_0)$ has a good minimal model then we may run $f\colon\rmap X.Y.$ the
$(K_X+\Delta)$-MMP until $f_{\eta}\colon\rmap X_{\eta}.Y_{\eta}.$ is a
$({X_{\eta}},\Delta _\eta)$-minimal model and $f_0\colon\rmap X_0.Y_0.$ is a semi-ample
model of $(X_0,\Delta_0)$.  If $D$ is a component of $\rfdown\Delta.$, $E$ is the image of
$D$ and $\phi\colon\rmap D.E.$ is the restriction of $f$ to $D$ then the induced map
$\phi_0\colon\rmap D_0.E_0.$ is a semi-ample model of $(D_0,\Sigma_0)$, where $\Sigma_0$
is defined by adjunction
\[
(K_{X_0}+\Delta_0)|_{D_0}=K_{D_0}+\Sigma_0.
\]
Further $\mathbf{B}_-(X,K_X+\Delta)$ contains no non-canonical centres of $(X_0,\Delta_0)$.
\end{lemma}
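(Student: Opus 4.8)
The plan is to run the minimal model program for $K_X+\Delta$ over $U$ and to combine \eqref{l_central} with the properness of $\pi$: for as long as the generic fibre is not a minimal model, every step of the MMP will be detected by the central fibre $X_0$, and termination of the induced MMP on $X_0$ then forces the generic fibre to become a minimal model after finitely many steps. First I would reduce to the case that hypotheses (1)--(3) hold: if instead $(X,\Delta)$ is log smooth over $U$ and (3) holds, pick $k=\dim U$ general prime divisors $\llist D.k.$ through $0$ and set $H_i=\pi^*D_i$, $H=\alist H.+.k.$; then log smoothness over $U$ forces $(X,H+\Delta)$ to be divisorially log terminal, $X_0$ to be reduced with $\dim X_0=\dim X-\dim U$, and every non canonical centre of $(X,\Delta)$ to be a stratum of $\rfdown\Delta.$, hence smooth over $U$ of the expected relative dimension, so (1) and (2) hold too. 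Now fix an $A$ ample over $U$ with $K_X+\Delta+A$ nef over $U$ and $(X,\Delta+A)$ divisorially log terminal, and run $f\colon\rmap X.Y.$ the $(K_X+\Delta)$-MMP over $U$ with scaling of $A$; since $H$ is pulled back from $U$ this is also the $(K_X+H+\Delta)$-MMP over $U$, so by \eqref{l_central} every step preserves (1)--(3) and is an isomorphism at the generic point of every non canonical centre of the current pair.

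The crucial observation is the following. For a single step $g\colon\rmap X'.Z'.$ of the MMP the exceptional, or flipping, locus $E\subset X'$ is closed, so if $E\cap X'_0=\emptyset$ then $\pi(E)$ is a proper closed subset of $U$, hence misses the generic point, so $E\cap X'_{\eta}=\emptyset$; thus a step that is an isomorphism near $X'_0$ is an isomorphism near $X'_{\eta}$. Contrapositively, whenever $K_{X'_{\eta}}+\Delta'_{\eta}$ is not nef there is a $(K_{X'}+\Delta')$-negative extremal ray over $U$ whose contracted locus, being closed with image meeting the generic point and hence equal to all of $U$, meets every fibre and in particular $X'_0$; so the step is not an isomorphism near $X'_0$, and by \eqref{l_central} it restricts to a genuine birational step of a $(K_{X'_0}+\Delta'_0)$-MMP. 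Since the contracted ray is represented by a curve in $X'_0$, a comparison of nef thresholds shows its scaling threshold on $X'_0$ equals the one over $U$, so these induced steps form a $(K_{X_0}+\Delta_0)$-MMP with scaling of $A_0$. Finally, no step is a Mori fibre space: its restriction to $X_0$ would be non-birational by \eqref{l_central}, whereas, $K_{X_0}+\Delta_0$ being pseudo-effective, every $(K_{X_0}+\Delta_0)$-MMP consists of birational steps.

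Since $(X_0,\Delta_0)$ has a good minimal model it has a minimal model, so by \cite[2.9]{HX11} any $(K_{X_0}+\Delta_0)$-MMP with scaling of $A_0$ terminates; hence only finitely many steps of our MMP fail to be isomorphisms near $X_0$, and after those steps $K_{X_0}+\Delta_0$ is nef. Applying the observation once more, $K_{X_{\eta}}+\Delta_{\eta}$ is then also nef: a $(K_X+\Delta)$-negative extremal ray over $U$ meeting $X_{\eta}$ would meet $X_0$, and a positive-dimensional fibre of its contraction lying in $X_0$ would contain a $(K_{X_0}+\Delta_0)$-negative curve, which is absurd. Therefore $f_{\eta}$ is a minimal model of $(X_{\eta},\Delta_{\eta})$, while $f_0$ is the output of a terminating $(K_{X_0}+\Delta_0)$-MMP with scaling, hence a minimal model of $(X_0,\Delta_0)$, which is semi-ample by \eqref{l_equivalent} because $(X_0,\Delta_0)$ has a good minimal model; so $f_0$ is a semi-ample model. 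I expect the main obstacle to be exactly this passage: checking that the restricted MMP is honestly a $(K_{X_0}+\Delta_0)$-MMP with scaling, and carrying its termination back to the generic fibre via the properness argument.

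It remains to treat the last two assertions. If $D$ is a component of $\rfdown\Delta.$ it is a non kawamata log terminal centre of $(X,\Delta)$, so by \eqref{l_central} both $\phi\colon\rmap D.E.$ and $\phi_0\colon\rmap D_0.E_0.$ are birational contractions; since $f_0$ is $(K_{X_0}+\Delta_0)$-non-positive, adjunction makes $\phi_0$ a $(K_{D_0}+\Sigma_0)$-non-positive birational contraction, and since $K_{Y_0}+\Gamma_0$ is semi-ample so is its restriction $K_{E_0}+\Theta_0$ to $E_0$, where $(K_{Y_0}+\Gamma_0)|_{E_0}=K_{E_0}+\Theta_0$; hence $\phi_0$ is a semi-ample model of $(D_0,\Sigma_0)$. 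Finally, every $f$-exceptional divisor is a component of $N_{\sigma}(X,K_X+\Delta)$ by \eqref{l_rational}, so by \eqref{l_def} it restricts to a component of $N_{\sigma}(X_0,K_{X_0}+\Delta_0)$, which by hypothesis (3) contains no non canonical centre of $(X_0,\Delta_0)$; as every non canonical centre of $(X,\Delta)$ dominates $U$ by (2) and, by \eqref{l_central}, is never contracted by $f$, it follows that $\mathbf{B}_-(X,K_X+\Delta)$ contains no non canonical centre of $(X,\Delta)$.
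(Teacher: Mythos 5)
Your overall strategy coincides with the paper's: run the relative $(K_X+\Delta)$-MMP with scaling, use \eqref{l_central} to control each step on the central fibre, and use the fact that a closed subset of $X$ disjoint from $X_0$ has closed image in $U$ missing $0$, hence missing the generic point, to transfer conclusions from $X_0$ to $X_\eta$. That last observation is indeed how the paper passes from ``all further steps are isomorphisms near $Y_0$'' to nefness of $K_{Y_\eta}+\Gamma_\eta$.

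The gap sits exactly where you predicted. You assert that the induced maps on $X_0$ ``form a $(K_{X_0}+\Delta_0)$-MMP with scaling of $A_0$'' and then quote termination of such an MMP. But \eqref{l_central} only yields that each restricted step $f_0\colon X_0\dashrightarrow Y_0$ is a birational contraction; it does not make it a step of an MMP on $X_0$. The restricted contraction may contract a whole face of $\overline{NE}(X_0)$ rather than an extremal ray, it is only $(K_{X_0}+\Delta_0)$-non-positive (it can be crepant, or an isomorphism, on $X_0$ while being a genuine divisorial contraction or flip on $X$), and the restriction of a flip over $U$ need not be a flip of $X_0$; so termination of the MMP with scaling on $X_0$ cannot be invoked. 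What \eqref{l_central} does give is that at scaling threshold $t$ the composite $f_0$ is a weak log canonical model of $(X_0,\Delta_0+tA_0)$, and this is precisely the input for \eqref{l_line}: since $(X_0,\Delta_0)$ has a good minimal model, there is $\epsilon>0$ such that every weak log canonical model of $(X_0,\Delta_0+tA_0)$ with $t\in[0,\epsilon)$ is a semi-ample model of $(X_0,\Delta_0)$. Hence once the threshold drops below $\epsilon$ (which happens because $K_X+\Delta$ is pseudo-effective --- your exclusion of Mori fibre spaces should likewise be argued via bigness of $K_{X_0}+\Delta_0+tA_0$ rather than via ``every MMP on $X_0$ is birational''), $K_{Y_0}+\Gamma_0$ is already nef, all further steps are isomorphisms near $Y_0$ and therefore near $Y_\eta$, and both assertions about $f_0$ and $f_\eta$ follow without ever knowing that the central fibre undergoes an honest MMP. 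Two smaller points: your derivation of the statement about $\mathbf{B}_-(X,K_X+\Delta)$ appeals to \eqref{l_def}, which requires log smoothness over $U$ and is unavailable under hypotheses (1)--(3), and ``a non canonical centre is never contracted by $f$'' does not by itself exclude it from $\mathbf{B}_-$; the paper instead uses that nefness of $K_{Y_0}+\Gamma_0$ forces $\mathbf{B}_-(X,K_X+\Delta)\cap X_0$ into the indeterminacy locus of $f_0$, which contains no $V_0$.
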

\begin{proof} Suppose that $(X,\Delta)$ is log smooth over $U$.  If $\llist D.k.$ are $k$
general divisors containing $0$ then $(X,H+\Delta)$ is log smooth, so that (1) and (2)
hold.  Thus we may assume (1--3) hold.

We run $f\colon\rmap X.Y.$ the $(K_X+\Delta)$-MMP with scaling of an ample
divisor $A$.  Let $\Gamma=f_*\Delta$ and $B=f_*A$.  By construction $K_Y+tB+\Gamma$ is nef
for some $t>0$.  Since $\pi\colon\map X.U.$ satisfies the hypotheses of \eqref{l_central},
$f_0\colon\rmap X_0.Y_0.$ is a weak log canonical model of $(X_0,tA_0+\Delta_0)$.

If $K_X+\Delta$ is not pseudo-effective then this MMP ends with a Mori fibre space for
some $t>0$ and so $Y_0$ is covered by curves on which $K_{Y_0}+tB_0+\Gamma_0$ is negative
by \eqref{l_central}.  This contradicts the fact that $K_{X_0}+tA_0+\Delta_0$ is big.
Thus $K_X+\Delta$ is pseudo-effective and given any $\epsilon>0$ we may run the MMP until
$t<\epsilon$.

Since $K_{X_0}+\Delta_0$ has a good minimal model \eqref{l_line} implies that there is a
constant $\epsilon>0$ such that if $t\in (0,\epsilon)$ then any more steps of this MMP are
an isomorphism in a neighbourhood of $Y_0$.  It follows that
$K_{Y_{\eta}}+tB_{\eta}+\Gamma_{\eta}$ is nef for all $t\in (0,\epsilon)$, so that
$K_{Y_{\eta}}+\Gamma_{\eta}$ is nef.  As we are running a MMP $Y$ is
$\mathbb{Q}$-factorial and so $Y_\eta$ is $\mathbb{Q}$-factorial.  Thus
$f_{\eta}\colon\rmap X_{\eta}.Y_{\eta}.$ is a minimal model of $(X_{\eta},\Delta_{\eta})$.

Suppose that $D$ is a component of $\rfdown\Delta.$.  \eqref{l_central} implies that the
induced map $\phi_0\colon\rmap D_0.E_0.$ is a birational contraction so that $\phi_0$ is a
semi-ample model of $(D_0,\Sigma_0)$.

As 
\[
(K_Y+\Gamma)|_{Y_0}=K_{Y_0}+\Gamma_0 
\]
is nef, it follows that $\mathbf{B}_-(Y,K_Y+\Gamma)$ does not intersect $Y_0$.  Let $G$ be
an ample $\mathbb{Q}$-divisor on $Y$.  Then the stable base locus of $K_Y+\Gamma+tG$ does
not intersect $Y_0$ for any $t>0$.  If $x\in X_0$ is a point where $f$ is an isomorphism
then $x$ is not a point of the stable base locus of $K_X+\Delta+f^*(tG)$.  As $t>0$ is
arbitrary, it follows that $\mathbf{B}_-(X,K_X+\Delta)|_{X_0}$ is contained in the locus
where $f\colon\rmap X.Y.$ is not an isomorphism.  By \eqref{l_central} $f$ is an
isomorphism in a neighbourhood of any non-canonical centre.  It follows that
$\mathbf{B}_-(X,K_X+\Delta)$ contains no non-canonical centres of $(X_0,\Delta_0)$.
\end{proof}

\section{Invariance of plurigenera}

We will need the following result of B. Berndtsson and M. P\u aun.
\begin{theorem}\label{t_BP} Let $f\colon\map X.\mathbb{D}.$ be a projective contraction 
morphism to the unit disk $\mathbb{D}$ and let $(X,\Delta)$ be a log pair.

If 
\begin{enumerate}
\item $(X,\Delta)$ is log smooth over $\mathbb{D}$ and $\rfdown \Delta.=0$,
\item the components of $\Delta$ do not intersect,
\item $K_X+\Delta$ is pseudo-effective, and 
\item $\mathbf{B}_-(X,K_X+\Delta)$ does not contain any components of $\Delta_0$,
\end{enumerate} 
then 
$$
\map {H^0(X,\ring X.(m(K_X+\Delta)))}.{H^0(X_0,\ring X_0.(m(K_{X_0}+\Delta_0)))}.
$$ 
is surjective for any integer $m$ such that $m\Delta$ is integral.
\end{theorem}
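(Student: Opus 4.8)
This is the key analytic input, due to Berndtsson and P\u{a}un, and I would establish it by reducing it to the Ohsawa--Takegoshi $L^2$ extension theorem for adjoint line bundles carrying singular hermitian metrics.

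Since $\mathbb D$ is Stein it suffices to extend an arbitrary section $u\in H^0(X_0,\mathcal O_{X_0}(m(K_{X_0}+\Delta_0)))$ to a section of $\mathcal O_X(m(K_X+\Delta))$ over $X$. The fibre $X_0=f^*(0)$ has trivial normal bundle (a local parameter of $\mathbb D$ at $0$ pulls back to a function cutting out $X_0$), so $K_X|_{X_0}=K_{X_0}$ and $(K_X+\Delta)|_{X_0}=K_{X_0}+\Delta_0$. Writing $m(K_X+\Delta)=K_X+L$ with $L:=(m-1)K_X+m\Delta$ — an honest Cartier divisor, because $m\Delta$ is integral and $X$ is smooth — the restriction $(K_X+L)|_{X_0}$ is $m(K_{X_0}+\Delta_0)$, so the problem becomes to extend $u$, viewed as a section of $(K_X+L)|_{X_0}$, across $X$: exactly the setting handled by the Ohsawa--Takegoshi theorem, provided $L$ is equipped with a singular metric whose curvature current is semipositive and with respect to which $u$ is $L^2$ along $X_0$.

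To construct such a metric, use the $\mathbb Q$-linear identity $L\equiv_{\mathbb Q}(m-1)(K_X+\Delta)+\Delta$. On the summand $(m-1)(K_X+\Delta)$ put $h_{\min}^{\otimes(m-1)}$, where $h_{\min}$ is a singular metric on $K_X+\Delta$ with minimal singularities and semipositive curvature current — available exactly because $K_X+\Delta$ is pseudo-effective — and on $\mathcal O_X\bigl(\sum a_i\Delta_i\bigr)$ put the canonical metric $\prod_i\lvert s_i\rvert^{-2a_i}$, whose curvature current is $[\Delta]\ge 0$. The product is a singular metric on $L$ with semipositive curvature. The crux is to verify that $u$ is $L^2$ along $X_0$ with respect to it, and this is where hypotheses (2) and (4) are indispensable: because the components of $\Delta$ are pairwise disjoint and $\mathbf{B}_-(X,K_X+\Delta)$ contains no component of $\Delta_0$, the metric $h_{\min}$ has vanishing generic Lelong numbers along each $\Delta_{0,i}$, so $h_{\min}|_{X_0}$ is a well-defined singular metric on $K_{X_0}+\Delta_0$ that adds no singularity along $\Delta_0$; heuristically the poles of order up to $ma_i$ that $u$ carries along $\Delta_{0,i}$ are then cancelled exactly by the weight $\lvert s_i\rvert^{-2a_i}$ of the canonical metric, leaving an integrand dominated near $\Delta_0$ by a power of the modulus of a holomorphic function, hence integrable. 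Turning this heuristic into a proof — controlling the restriction of the minimal-singularities metric to the central fibre — is the real difficulty; it forces a Siu-type bootstrapping, in which one first extends the powers $u^{\otimes k}$ using metrics on $km(K_X+\Delta)$ built from Bergman kernels (so that the twisting metric has strictly positive curvature and its restriction is unproblematic), derives uniform $L^2$ bounds, and lets $k\to\infty$; alternatively one appeals to Berndtsson's positivity theorem for the direct images $f_*\mathcal O_X(m(K_X+\Delta))$.

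Granting the $L^2$ bound on $X_0$, the Ohsawa--Takegoshi extension theorem in its singular-metric form yields $\tilde u\in H^0(X,\mathcal O_X(m(K_X+\Delta)))$ with $\tilde u|_{X_0}=u$, together with a global $L^2$ estimate that forces $\tilde u$ to be a genuine holomorphic section; as $u$ was arbitrary the restriction map is surjective. The step I expect to be genuinely hard, and which is the entire content of the Berndtsson--P\u{a}un argument, is the $L^2$ verification on the central fibre, i.e. the control of the minimal metric under restriction — precisely the place where condition (4) on $\mathbf{B}_-(X,K_X+\Delta)$ cannot be removed.
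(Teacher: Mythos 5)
Your outline of how one would prove this statement from scratch is faithful to the Berndtsson--P\u{a}un strategy, but as a proof it has an acknowledged and genuine gap: the $L^2$ estimate for $u$ along $X_0$ with respect to the minimal-singularities metric is exactly the hard content of the theorem, and you defer it to ``a Siu-type bootstrapping'' or to ``Berndtsson's positivity theorem for direct images'' without carrying either out. The statement you would need at that point --- that the restriction of the metric with minimal singularities to the central fibre adds no mass along the components of $\Delta_0$ because none of them lies in $\mathbf{B}_-(X,K_X+\Delta)$ --- is essentially equivalent to the extension theorem itself, so the argument as written is circular unless the iteration is actually executed. The paper does not attempt this: its proof consists entirely of verifying that hypotheses (1)--(4) match the hypotheses of the published result \cite[Theorem 0.2]{BP10}. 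Concretely, one takes $\alpha=0$ and $p=m$ so that $m[\Delta]\in c_1(L)$ for $L=\mathcal{O}_X(m\Delta)$; hypothesis (4) gives $\nu_{\min}(\{K_X+\Delta\},X_0)=0$ and $\rho^j_{\min,\infty}=0$, hence $J=J'$, $\Xi=0$, and condition $(\star)$ of \cite{BP10}; hypothesis (2) is precisely the transversality assumption there; and $\lfloor\Delta\rfloor=0$ gives the finiteness of $\int_{X_0}e^{\varphi_0-\frac 1m\varphi_{m\Delta}}$ needed to feed a section $u$ into the extension statement.

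If your intent was to reprove the analytic input rather than cite it, the missing piece is the uniform $L^2$ bound in the inductive extension of $u^{\otimes k}$ twisted by an auxiliary ample class (or, in the direct-image formulation, the plurisubharmonic variation of the relative Bergman kernel metric on $f_*\mathcal{O}_X(m(K_X+\Delta))$ and the identification of its restriction to the fibre over $0$). Until one of these is supplied, the heuristic cancellation of the poles of $u$ along $\Delta_{0,i}$ against the weight $|s_i|^{-2a_i}$ remains a heuristic. For the purposes of this paper the correct and economical move is the one the authors make: quote \cite[Theorem 0.2]{BP10} and check its hypotheses, which is a short bookkeeping exercise rather than an analytic argument.
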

\begin{proof} Note that the case $\Delta=0$ is proven in \cite{Siu02}.  Therefore we
may assume that $\Delta \neq 0$. We check that the hypotheses of \cite[Theorem 0.2]{BP10}
are satisfied and we will use the notation established there.

We take $\alpha=0$ and $p=m$ so that if $L=\ring X.(m\Delta)$ then 
$$
p([\Delta]+\alpha)=m[\Delta]\in c_1(L),
$$
is automatic.  $K_X+\Delta$ is pseudo-effective by assumption.  As we are assuming (4),
$\nu_{\min}( \{K_X+\Delta \},X_0)=0$ and $\rho ^j_{\min,\infty}=0$.  In particular $J=J'$
and $\Xi=0$.  As we are assuming that the components of $\Delta$ do not intersect the
transversality hypothesis is automatically satisfied.

If 
$$
u\in H^0(X_0,\ring X_0.(m(K_{X_0}+\Delta _0)))
$$ 
is a non-zero section then we choose $h_0=e^{-\varphi _0}$ such that $\varphi _0\leq 0=\varphi_{\Xi}$ and
$$
\Theta _{h_0}(K_{X_0}+\Delta _0)\geq 0.
$$  
Since $u$ has no poles and $\rfdown \Delta.=0$, we have
$$
\int _{X_0}\! e^{\varphi_0-\frac 1 m \varphi _{m\Delta}}<\infty.
$$ 

\cite[Theorem 0.2]{BP10} implies that  we can extend $u$ to 
\[
U\in H^0(X,\ring X.(m(K_X+\Delta))).\qedhere
\]
\end{proof}

\begin{theorem}\label{t_inv} Let $\pi\colon\map X.U.$ be a projective contraction 
morphism to a smooth variety $U$ and let $(X,\Delta)$ be a log smooth pair over $U$
such that $\rfdown\Delta.=0$.  

Then
\[
h^0(X_u,\ring X_u.(m(K_{X_u}+\Delta_u))),
\] 
is independent of the point $u\in U$, for all positive integers $m$.

In particular $\kappa(X_u,K_{X_u}+\Delta_u)$ is independent of $u\in U$ and 
\[
\map {f_*\ring X.(m(K_X+\Delta))}.{H^0(X_u,\ring X_u.(m(K_{X_u}+\Delta_u)))}.
\] 
is surjective for all positive integers $m>0$ and for all $u\in U$.
\end{theorem}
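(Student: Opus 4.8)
All three assertions follow once we know that the plurigenus $h^0(X_u,\ring X_u.(m(K_{X_u}+\Delta_u)))$ is independent of $u$: invariance of the Iitaka dimension is then immediate, and, since $\pi$ is smooth, Grauert's theorem shows that $\pi_*\ring X.(m(K_X+\Delta))$ is locally free and commutes with base change, which gives the surjectivity. So the plan is to prove this constancy. By upper semicontinuity of cohomology it holds on a dense open subset, so, after shrinking $U$ about a fixed closed point $0$, we may assume the plurigenus is constant on $U\setminus\{0\}$ and we must rule out a jump at $0$. The statement is local on $U$; cutting by general hyperplanes through $0$ preserves log smoothness over the base and the fibres $(X_u,\Delta_u)$ for $u$ on the section, so we may take $U$ to be a curve, and then, working analytically, a disk. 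It now suffices to show that the restriction map
\[
\map {H^0(X,\ring X.(m(K_X+\Delta)))}.{H^0(X_0,\ring X_0.(m(K_{X_0}+\Delta_0)))}.
\]
is surjective: since $\pi_*\ring X.(m(K_X+\Delta))$ is torsion free, hence locally free on the curve, of rank equal to the generic value of the plurigenus, surjectivity then forces $h^0(X_0,\ring X_0.(m(K_{X_0}+\Delta_0)))$ to equal that rank, ruling out the jump.

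To prove the surjectivity I would invoke the extension theorem of Berndtsson and P\u aun, \eqref{t_BP}, after arranging its hypotheses. If $K_X+\Delta$ is not pseudo-effective over $U$ then, by the invariance of $\kappa_{\sigma}$ along the fibres (which follows as in the proof of \eqref{l_def}, computing $\kappa_{\sigma}$ from the functions $m\mapsto h^0(X_u,\ring X_u.(m(K_{X_u}+\Delta_u))+H_u)$ with $H$ an ample divisor on $X$), every $K_{X_u}+\Delta_u$ is non pseudo-effective, so all the plurigenera vanish and there is nothing to prove. Thus we may assume $K_X+\Delta$ is pseudo-effective, which is hypothesis (3) of \eqref{t_BP}, while (1) holds by assumption. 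Applying \eqref{l_klt} we blow up the strata of $(X,\Delta)$; as these are smooth over $U$ the blow ups are compatible with $\pi$, preserve log smoothness over $U$, and, by birational invariance of plurigenera for log smooth pairs, leave the fibrewise plurigenera unchanged, so after this we may assume no two components of $\Delta$ meet, which is hypothesis (2). Finally, using \eqref{l_commute}, we replace $\Delta$ by $\Theta=\Delta-\Delta\wedge N_{\sigma}(X,K_X+\Delta)$: this restricts on $X_0$ to $\Theta_0=\Delta_0-\Delta_0\wedge N_{\sigma}(X_0,K_{X_0}+\Delta_0)$, and passing from $\Delta$ to $\Theta$ changes neither $h^0(X,m(K_X+\Delta))$ nor $h^0(X_0,m(K_{X_0}+\Delta_0))$, so this is harmless.

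It remains to verify hypothesis (4) of \eqref{t_BP}, that $\mathbf{B}_-(X,K_X+\Theta)$ contains no component of $\Theta_0$, and this is the step I expect to require the real work. By construction $N_{\sigma}(X,K_X+\Theta)$ has no component in common with $\Theta$, so no component of $\Theta$ lies in $\mathbf{B}_-(X,K_X+\Theta)$; and each component of $\Theta$, being a stratum of a pair log smooth over the disk, dominates $U$, so the components of $\Theta_0$ are precisely the restrictions to $X_0$ of the components of $\Theta$. The point is to exclude that such a codimension two restriction lies in $\mathbf{B}_-(X,K_X+\Theta)$; for this one should combine the deformation invariance of $N_{\sigma}$, \eqref{l_def}, applied on these divisors, with the restriction behaviour of the diminished base locus, blowing up further strata of $(X,\Theta)$ if necessary. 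Granting (4), \eqref{t_BP} gives the required surjectivity, and by the reductions above the theorem follows.
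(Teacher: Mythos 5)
Your overall strategy coincides with the paper's: reduce to a disk, reduce constancy to surjectivity of restriction onto the central fibre, separate the components of $\Delta$ via \eqref{l_klt}, strip a ``fixed part'' off $\Delta$, and then apply \eqref{t_BP}. However, the one step you flag as ``the real work'' --- verifying hypothesis (4) of \eqref{t_BP}, that $\mathbf{B}_-(X,K_X+\Delta)$ contains no component of $\Delta_0$ --- is precisely the heart of the proof, and your sketch of it does not work as stated. The components of $\Delta_0$ have codimension two in $X$, whereas $N_\sigma(X,K_X+\Delta)$ and its deformation invariance \eqref{l_def} only control the \emph{divisorial} part of the diminished base locus of the total space; arranging that no component of the divisor $\Theta$ lies in $N_\sigma(X,K_X+\Theta)$ says nothing about whether the codimension-two subvarieties $\Theta_0\cap X_0$ lie in $\mathbf{B}_-(X,K_X+\Theta)$. ``The restriction behaviour of the diminished base locus'' that you appeal to is exactly what has to be proved, and it is not formal.

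The paper's argument runs in the opposite order and uses a different input. It first modifies the boundary \emph{on the fibre}: it sets $\Theta_u=\Delta_u-\Delta_u\wedge F/m$ where $F$ is the fixed divisor of the actual linear system $|m(K_{X_u}+\Delta_u)|$ (not $N_\sigma$ of the total space), so that no component of $\Theta_u$ lies in $\operatorname{Bs}|m(K_{X_u}+\Theta_u)|\supseteq\mathbf{B}_-(X_u,K_{X_u}+\Theta_u)$, and this replacement visibly changes neither $H^0$ on the fibre nor on the total space and keeps $m\Theta$ integral (your $\Theta$ built from $N_\sigma$ need not have $m\Theta$ integral, which \eqref{t_BP} requires). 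It then invokes the MMP-in-families machinery, \eqref{l_generic-minimal} (built on \eqref{l_central}), to lift the fibrewise statement to the conclusion that $\mathbf{B}_-(X,K_X+\Delta)$ contains no component of $\Delta_u$. That lifting lemma --- or some equivalent mechanism for transferring base-locus information from the fibre to the total space --- is the missing ingredient in your proposal; without it the application of \eqref{t_BP} is not justified.
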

\begin{proof} Fix a positive integer $m$.  We may assume that $U$ is affine.  We may also
assume that the strata of $\Delta$ have irreducible fibers over $U$, cf. the proof of
\cite[4.2]{HMX10}.

Replacing $\Delta$ by
\[
\Delta_m=\frac{\rfdown m\Delta.}m
\]
we may assume that $m\Delta$ is integral.

By \eqref{l_klt} there is a composition of smooth blow ups of the strata of $\Delta$ such
that if we write
\[
K_Y+\Gamma=\pi^*(K_X+\Delta)+E,
\]
where $\Gamma\geq 0$ and $E\geq 0$ have no common components, $\pi_*\Gamma=\Delta$ and
$\pi_*E=0$, then no two components of $\Gamma$ intersect.  Then $(Y,\Gamma)$ is log smooth
over $U$, $m\Gamma$ is integral and $\rfdown\Gamma.=0$.

As 
\[
h^0(Y_u,\ring Y_u.(m(K_{Y_u}+\Gamma_u)))=h^0(X_u,\ring X_u.(m(K_{X_u}+\Delta_u))),
\]
replacing $(X,\Delta)$ by $(Y,\Gamma)$ we may assume that no two components of $\Delta$
intersect.  

We may assume that 
\[
h^0(X_u,\ring X_u.(m(K_{X_u}+\Delta_u)))\neq 0,
\]
for some $u\in U$.  Let $F$ be the fixed divisor of the linear system
$|m(K_{X_u}+\Delta_u)|$ and let
$$
\Theta_u=\Delta_u-\Delta _u\wedge F/m.
$$
There is a unique divisor $0\leq \Theta\leq\Delta$ such that 
\[
\Theta|_{X_u}=\Theta_u.
\]
Note that $m\Theta$ is integral, 
\begin{align*} 
f_*\ring X.(m(K_X+\Theta))&\subset f_*\ring X.(m(K_X+\Delta)) \\
\intertext{and} 
H^0(X_u,\ring X_u.(m(K_{X_u}+\Theta _u)))&=H^0(X_u,\ring X_u.(m(K_{X_u}+\Delta_u))).
\end{align*} 
Replacing $(X,\Delta)$ by $(X,\Theta)$ we may assume that no component of $\Delta_u$ is in
the base locus of $|m(K_{X_u}+\Delta_u)|$.  In particular
$\mathbf{B}_-(X_u,K_{X_u}+\Delta_u)$ does not contain any components of $\Delta_u$.  Let
$A$ be an ample divisor on $X$. We may assume that $(X,\Delta +A)$ is log smooth over $U$.
Since $K_{X_u}+\Delta _u+tA_u$ is big and $(X_u, \Delta _u+tA_u)$ is kawamata log terminal
for any $0<t<1$, it follows that $(X_u, \Delta _u+tA_u)$ has a good minimal model.
\eqref{l_generic-minimal} implies that $\mathbf{B}_-(X,K_X+\Delta+tA)$ does not contain
any components of $\Delta_u$ for any $0<t<1$.  Since
\[
\mathbf{B}_-(X,K_X+\Delta)=\bigcap _{t>0}\mathbf{B}_-(X,K_X+\Delta+tA),
\] 
it follows that $\mathbf{B}_-(X,K_X+\Delta)$ does not contain any components of $\Delta_u$
and we may apply \eqref{t_BP}.
\end{proof}

Using \eqref{t_inv} we can give another proof of \cite[(1.8)]{HMX12}:
\begin{corollary}\label{c_inv}  Let $\pi\colon\map X.U.$ be a projective contraction 
morphism to a smooth variety $U$.  

If $(X,\Delta)$ is a log smooth pair over $U$ and the coefficients of $\Delta$ are at most
one then $\vol(X_u,K_{X_u}+\Delta_u)$ is independent of $u\in U$.
\end{corollary}
\begin{proof} If $\epsilon\in (0,1]$ is a rational number then
$\rfdown(1-\epsilon)\Delta.=0$ and so \eqref{t_inv} implies that
$h^0(X_u,\ring X_u.(m(K_{X_u}+(1-\epsilon)\Delta_u)))$ is independent of the point
$u\in U$, for all sufficiently divisible integers $m>0$.  In particular
$\vol(X_u,K_{X_u}+(1-\epsilon)\Delta_u)$, is independent of the point $u\in U$.  By
continuity of the volume, it follows that $\vol(X_u,K_{X_u}+\Delta_u)$ is independent of
the point $u\in U$.
\end{proof}
\section{The MMP in families II}

\begin{lemma}\label{l_index} Let $(X,\Delta)$ be a log canonical pair and let $(X,\Phi)$ 
be a divisorially log terminal pair, where $X$ is $\mathbb{Q}$-factorial of dimension $n$.
Let
\[
\Delta(t)=(1-t)\Delta+t\Phi.
\]
Suppose that $\map X.U.$ is projective, that $U$ is smooth and affine, and that the fibres
of $\pi$ all have the same dimension.  Let $f\colon\rmap X.Y.$ be a step of the
$(K_X+\Delta(t))$-MMP over $U$ and let $\Gamma=f_*\Delta$.

Suppose $0\in U$ is a closed point such that $X_0$ is reduced, no component of $X_0$ is
contained in the support of $\Delta$, $K_{X_0}+\Delta_0$ is nef and $(X_0,\Delta_0)$
is log canonical.  Let $r$ be a positive integer, such that $r(K_{X_0}+\Delta_0)$ is
Cartier.  

If 
\[
0<t\leq \frac 1{1+2nr}
\] 
then $f$ is $(K_X+\Delta)$-trivial in a neighbourhood of $X_0$.  In particular
$(Y_0,\Gamma_0)$ is log canonical, $K_{Y_0}+\Gamma_0$ is nef, $r(K_{Y_0}+\Gamma_0)$ is
Cartier and $(Y,\Gamma)$ is log canonical in a neighbourhood of $Y_0$.
\end{lemma}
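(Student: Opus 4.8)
The plan is to realise $f$ as the contraction (or flip) of a single $(K_X+\Delta(t))$-negative extremal ray $R$ over $U$, to exhibit a rational curve spanning $R$ which lies in the central fibre $X_0$ and has bounded $(K_X+\Phi)$-degree, and then to play the inequality coming from $K_{X_0}+\Delta_0$ being nef (which forces its $(K_X+\Delta)$-degree into $\tfrac1r\Z_{\ge 0}$) against the inequality coming from $R$ being $(K_X+\Delta(t))$-negative. The numerical bound $t\le\tfrac1{1+2nr}$ is exactly what makes these two incompatible unless that degree vanishes.

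First I would set up notation. Let $g\colon\map X.Z.$ be the extremal contraction over $U$ associated to the step $f$, so that $f=g$ when $g$ is divisorial or a Mori fibre space and $f$ is the flip of $g$ when $g$ is small, and let $R$ be the contracted ray over $U$. If $g$ is an isomorphism in a neighbourhood of $X_0$ (which can only happen when $f$ is birational) then $f$ is too and there is nothing to prove; otherwise there is a positive-dimensional fibre of $g$ meeting $X_0$. Such a fibre lies in $X_0$, being contained in a fibre of $\pi$, so there is a curve $C_0\subset X_0$ with $[C_0]\in R$; since $K_{X_0}+\Delta_0=(K_X+\Delta)|_{X_0}$ is nef we get $(K_X+\Delta)\cdot C_0\ge 0$, and as $(K_X+\Delta(t))\cdot C_0<0$ the convex decomposition $K_X+\Delta(t)=(1-t)(K_X+\Delta)+t(K_X+\Phi)$ forces $(K_X+\Phi)\cdot C_0<0$. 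Hence $R$ is a $(K_X+\Phi)$-negative extremal ray of the $\mathbb{Q}$-factorial divisorially log terminal pair $(X,\Phi)$.

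Next I would apply the length of extremal rays in its refined, fibrewise form: for a $(K_X+\Phi)$-negative extremal ray there is a rational curve of $(K_X+\Phi)$-degree at most $2n$ inside any prescribed positive-dimensional fibre of $g$; taking a fibre lying over $0\in U$ yields a rational curve $C\subset X_0$ with $[C]\in R$ and $0<-(K_X+\Phi)\cdot C\le 2n$. Put $a=(K_X+\Delta)\cdot C$ and $b=(K_X+\Phi)\cdot C$. Since $C\subset X_0$ and $r(K_{X_0}+\Delta_0)$ is Cartier, $a=(K_{X_0}+\Delta_0)\cdot C\in\tfrac1r\Z_{\ge 0}$; the length bound gives $-2n\le b<0$; and $[C]\in R$ gives $(1-t)a+tb=(K_X+\Delta(t))\cdot C<0$, so $(1-t)a<-tb\le 2nt$ and $a<\tfrac{2nt}{1-t}$. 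As $t\le\tfrac1{1+2nr}$ we have $1-t\ge\tfrac{2nr}{1+2nr}$, hence $\tfrac{2nt}{1-t}\le\tfrac1r$; therefore $a<\tfrac1r$, and so $a=0$. Thus $(K_X+\Delta)\cdot R=0$: $f$ is $(K_X+\Delta)$-trivial in a neighbourhood of $X_0$, and there $K_X+\Delta$, and likewise $K_Y+\Gamma$, is pulled back from $Z$. The remaining assertions (for $f$ birational) are then formal: $f$ is crepant over $Z$, so $(Y,\Gamma)$ is log canonical near $Y_0$; $K_{Y_0}+\Gamma_0$ is the pullback to $Y_0$ of the same class on $Z_0$ that $K_{X_0}+\Delta_0$ pulls back from, which is nef and, since its pullback $r(K_{X_0}+\Delta_0)$ is Cartier and $g$ is a contraction, is $r$-Cartier; and $(Y_0,\Gamma_0)$ is log canonical because it is crepant to $(X_0,\Delta_0)$.

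The step I expect to be the main obstacle is the production of the curve $C$: the bare length-of-extremal-rays statement only gives \emph{some} rational curve of bounded degree in $R$, and all one knows is that its class is a positive multiple of the class of a curve in $X_0$, with no control on the multiple — so the final arithmetic collapses unless the bounded curve can itself be taken inside $X_0$. This needs the version of the cone/length theorem obtained by bend-and-break \emph{inside} a positive-dimensional fibre of $g$, which is available precisely because $g$ is not an isomorphism over $0$. The only other points needing care — that the exceptional locus is covered by curves in $R$, and that restriction to $X_0$ and $Y_0$ commutes with adjunction — are standard.
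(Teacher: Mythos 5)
Your proposal is correct and follows essentially the same route as the paper: both reduce to the case where the contraction is not an isomorphism near $X_0$, invoke Kawamata's result producing a rational curve of $(K_X+\Phi)$-degree at most $2n$ spanning the ray and lying inside $X_0$, and play that against the Cartier index bound $(K_{X_0}+\Delta_0)\cdot C\in\tfrac1r\mathbb{Z}_{\ge 0}$ to force $(K_X+\Delta)\cdot R=0$. Your identification of the fibrewise version of the length-of-extremal-rays theorem as the essential input is exactly the point of the paper's citation of Kawamata, and the remaining assertions are handled, as you say, by the crepant/pullback argument (the paper cites \cite[3.17]{KM98}).
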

\begin{proof} Let $R$ be the extremal ray corresponding to $f$.  

If $f$ is an isomorphism in a neighbourhood of $X_0$ there is nothing to prove and if
$(K_X+\Delta)\cdot R=0$, the result follows by \cite[3.17]{KM98}.  

Otherwise, as $K_{X_0}+\Delta_0$ is nef, $(K_X+\Delta)\cdot R>0$ and so
$(K_X+\Phi)\cdot R<0$.  \cite{Kawamata91} (see also \cite[3.8.1]{BCHM10}) implies that $R$
is spanned by a rational curve $C$ contained in $X_0$ such that
\[
-(K_X+\Phi)\cdot C\leq 2n.
\]
As $r(K_{X_0}+\Delta_0)$ is Cartier
\[
(K_X+\Delta)\cdot C=(K_{X_0}+\Delta_0)\cdot C\geq \frac 1r.
\]
Thus 
\begin{align*} 
0 &>(K_X+\Delta(t))\cdot C \\
  &=(1-t)(K_X+\Delta)\cdot C+t(K_X+\Phi)\cdot C \\
  &\geq \frac{(1-t)}r-2nt\\
  &=\frac 1r-t\frac {(1+2nr)}r\\
  &\geq 0,
\end{align*} 
a contradiction.
\end{proof}

\begin{lemma}\label{l_generic-fano} Let $(X,\Delta=S+B)$ be a divisorially log terminal pair, 
where $S\leq \rfdown\Delta.$ and $X$ is $\mathbb{Q}$-factorial.  Let $\pi\colon\map X.U.$
be a projective morphism, where $U$ is smooth and affine, and the fibres of $\pi$ have the
same dimension.  Let $0\in U$ be a closed point, such that $X_0$ is integral, let $n$ be
the dimension of $X$ and let $r$ be a positive integer such that $r(K_{X_0}+\Delta_0)$ is
Cartier.  Suppose that $X_0$ is not contained in the support of $\Delta$.  Fix
\[
\epsilon<\frac 1{2nr+1}.
\]

If $(X_0,\Delta_0)$ is log canonical, $K_{X_0}+\Delta_0$ is nef but $K_X+(1-\epsilon)S+B$
is not pseudo-effective, then we may run $f\colon\rmap X.Y.$ the
$(K_X+(1-\epsilon)S+B)$-MMP over $U$, the steps of which are all $(K_X+\Delta)$-trivial in
a neighbourhood of $X_0$, until we arrive at a Mori fibre space $\psi\colon\map Y.Z.$ such
that the strict transform of $S$ dominates $Z$ and $K_Y+\Gamma\sim_{\mathbb{Q}}\psi^*L$,
for some divisor $L$ on $Z$.
\end{lemma}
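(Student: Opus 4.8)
The plan is to exhibit $K_X+(1-\epsilon)S+B$ as one of the boundaries $\Delta(t)$ appearing in \eqref{l_index}.  Since $S\leq\rfdown\Delta.$ we have $0\leq B\leq\Delta$, so $(X,\Phi)$ is divisorially log terminal, where $\Phi:=B=\Delta-S$; and with $\Delta(t)=(1-t)\Delta+t\Phi$ we get
\[
\Delta(t)=(1-t)(S+B)+tB=(1-t)S+B,
\]
so that $K_X+\Delta(\epsilon)=K_X+(1-\epsilon)S+B$.  First I would run $f\colon\rmap X.Y.$ the $(K_X+\Delta(\epsilon))$-MMP over $U$ with scaling of an ample divisor.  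As $(X,\Delta(\epsilon))$ is $\mathbb{Q}$-factorial divisorially log terminal and $K_X+\Delta(\epsilon)$ is not pseudo-effective over $U$, this MMP terminates with a Mori fibre space $\psi\colon\map Y.Z.$ over $U$, see \cite{BCHM10} and \cite{HX11}.  Let $\Gamma=f_*\Delta$.

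The point is now to feed \eqref{l_index} to each step of this MMP, taking $(X,S+B)$ for its log canonical pair, $(X,B)$ for its divisorially log terminal pair, and $t=\epsilon$.  Since $\epsilon<\frac1{2nr+1}$ we have $0<\epsilon\leq\frac1{1+2nr}$, so \eqref{l_index} shows that each step is $(K_X+\Delta)$-trivial in a neighbourhood of the central fibre, that on the central fibre the transform of $K+\Delta$ stays nef and log canonical with the same Cartier index $r$, and that the transform of $(X,\Delta)$ stays log canonical in a neighbourhood of the central fibre.  These are precisely the hypotheses \eqref{l_index} needs for the next step — together with the fact that the steps of an MMP preserve $\mathbb{Q}$-factoriality and the divisorially log terminal property of the transform of $\Phi$ — so by induction every step, including the Mori fibre contraction $\psi$, is $(K_X+\Delta)$-trivial in a neighbourhood of the relevant central fibre.

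Since $\psi$ is extremal, $\overline{\operatorname{NE}}(Y/Z)$ is a single ray $R$; and since the $\psi$-contracted curves sweep out $Y$, one of them meets the neighbourhood of $Y_0$ on which the last step is $(K_Y+\Gamma)$-trivial, whence $(K_Y+\Gamma)\cdot R=0$, that is, $K_Y+\Gamma$ is numerically trivial over $Z$.  As $Y$ is $\mathbb{Q}$-factorial, the contraction theorem (cf. \cite[3.7]{KM98}) then yields a $\mathbb{Q}$-Cartier divisor $L$ on $Z$ with $K_Y+\Gamma\sim_{\mathbb{Q}}\psi^*L$, which is half the assertion.  For the other half, write $S_Y=f_*S$ and $B_Y=f_*B$, so that $\Gamma=S_Y+B_Y$ and $K_Y+\Gamma(\epsilon)=K_Y+(1-\epsilon)S_Y+B_Y=(K_Y+\Gamma)-\epsilon S_Y$ is anti-ample over $Z$, since $\psi$ is a Mori fibre contraction for $(Y,\Gamma(\epsilon))$.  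Hence
\[
\epsilon S_Y=(K_Y+\Gamma)-(K_Y+\Gamma(\epsilon))
\]
is the difference of a $\psi$-trivial and a $\psi$-anti-ample $\mathbb{Q}$-Cartier divisor, so it is $\psi$-ample.  Since the fibres of a Mori fibre space are positive-dimensional, a $\psi$-ample divisor cannot avoid a fibre; therefore $S_Y\neq 0$ and $\psi(S_Y)=Z$, and in particular some component of $S_Y$ dominates $Z$.

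The main obstacle is the bookkeeping around \eqref{l_index}: one must check that its hypotheses are genuinely restored after each step, and in particular that controlling log canonicity, nefness, the Cartier index and $(K_X+\Delta)$-triviality only in a neighbourhood of the successive central fibres — which is all \eqref{l_index} provides — suffices both to iterate the lemma and to obtain the \emph{global} statement $K_Y+\Gamma\sim_{\mathbb{Q}}\psi^*L$.  The device that rescues the latter is that the relative cone $\overline{\operatorname{NE}}(Y/Z)$ is a ray, so $(K_Y+\Gamma)$-triviality of one curve in it forces the whole class to be $\psi$-trivial.  A more routine point is to justify, in the relative divisorially log terminal setting, the existence and the termination — necessarily with a Mori fibre space, as $K_X+\Delta(\epsilon)$ is not pseudo-effective over $U$ — of the $(K_X+\Delta(\epsilon))$-MMP with scaling.
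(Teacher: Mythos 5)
Your proof is correct and follows essentially the same route as the paper: run the $(K_X+(1-\epsilon)S+B)$-MMP with scaling over $U$, apply \eqref{l_index} inductively with $\Phi=B$ and $t=\epsilon$ to get $(K_X+\Delta)$-triviality near $X_0$ at every step, and terminate in a Mori fibre space. Your extra arguments for the two conclusions the paper leaves implicit — that $(K_Y+\Gamma)\cdot R=0$ on the contracted ray forces $K_Y+\Gamma\sim_{\mathbb{Q}}\psi^*L$ via the contraction theorem, and that $\epsilon S_Y=(K_Y+\Gamma)-(K_Y+\Gamma(\epsilon))$ is $\psi$-ample so $S_Y$ dominates $Z$ — are exactly the intended justifications.
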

\begin{proof} We run $f\colon\rmap X.Y.$ the $(K_X+(1-\epsilon)S+B)$-MMP with scaling of
an ample divisor over $U$.  \eqref{l_index} implies that every step of this MMP is
$(K_X+\Delta)$-trivial in a neighbourhood of $X_0$.  As $K_X+(1-\epsilon)S+B$ is not
pseudo-effective this MMP ends with a Mori fibre space $\psi\colon\map Y.Z.$.  As every
step of this MMP is $(K_X+\Delta)$-trivial in a neighbourhood of $X_0$, it follows that
the strict transform of $S$ dominates $Z$.
\end{proof}

\begin{lemma}\label{l_base} Let $(X,\Delta)$ be a divisorially log terminal pair, 
where $X$ is $\mathbb{Q}$-factorial and projective and $\Delta$ is a $\mathbb{Q}$-divisor.

If $\Phi$ is a $\mathbb{Q}$-divisor such that 
\[
0\leq \Delta-\Phi\leq N_{\sigma}(X,K_X+\Delta),
\]
then $(X,\Phi)$ has a good minimal model if and only if $(X,\Delta)$ has a good minimal
model.
\end{lemma}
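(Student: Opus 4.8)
The plan is to prove the two implications separately. Throughout, note that both sides of the hypothesis force $K_X+\Delta$ to be pseudo-effective: this is implicit in writing $N:=N_{\sigma}(X,K_X+\Delta)$, and if $(X,\Phi)$ has a good minimal model then $K_X+\Phi$ is pseudo-effective and $K_X+\Delta=(K_X+\Phi)+(\Delta-\Phi)$ is the sum of a pseudo-effective and an effective divisor. Since $0\le\Delta-\Phi\le N$, the standard properties of $N_{\sigma}$ (see \cite{Nakayama04}) give $N_{\sigma}(X,K_X+\Phi)=N-(\Delta-\Phi)$, so $K_X+\Delta$ and $K_X+\Phi$ share the same positive part; and $(X,\Phi)$ is again divisorially log terminal because $0\le\Phi\le\Delta$.

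Assume first that $(X,\Delta)$ has a good minimal model. I would run $f\colon\rmap X.Y.$ the $(K_X+\Delta)$-MMP with scaling of an ample divisor; by \eqref{l_line} (and \cite[2.9]{HX11}) this terminates with a minimal model, which is good by \eqref{l_equivalent}, and by \eqref{l_rational} I may run it far enough that $f$ contracts every component of $N$, hence every component of $\Delta-\Phi$. Then $\Gamma:=f_*\Delta=f_*\Phi$ and $K_Y+\Gamma$ is semi-ample. Let $p\colon\map W.X.$ and $q\colon\map W.Y.$ resolve $f$ and write $p^*(K_X+\Delta)=q^*(K_Y+\Gamma)+E$ with $E\ge 0$ and $q$-exceptional; since $q^*(K_Y+\Gamma)$ is nef, $E=N_{\sigma}(W,p^*(K_X+\Delta))\ge p^*N$, the last inequality because $N_{\sigma}$ does not decrease under pullback \cite{Nakayama04}. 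Hence $p^*(K_X+\Phi)=q^*(K_Y+\Gamma)+E'$, where $E'=E-p^*(\Delta-\Phi)\ge p^*(N-(\Delta-\Phi))\ge 0$ is again $q$-exceptional (every $p$-exceptional divisor is $q$-exceptional, and the components of $p^*(\Delta-\Phi)$ are strict transforms of $f$-exceptional divisors or $p$-exceptional). Thus $f$ is a $(K_X+\Phi)$-non-positive birational contraction onto $(Y,\Gamma)$ with $K_Y+\Gamma$ semi-ample, i.e. a semi-ample model of $(X,\Phi)$, and \eqref{l_equivalent} produces a good minimal model of $(X,\Phi)$.

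For the converse, assume $(X,\Phi)$ has a good minimal model. Since $0\le\Delta-\Phi\le N$ and $\mult_D\Fix|m(K_X+\Delta)|\ge m\,\sigma_D(K_X+\Delta)$ for every prime divisor $D$, every section of $m(K_X+\Delta)$ vanishes along $m(\Delta-\Phi)$, so $H^0(X,\ring X.(m(K_X+\Delta)))\cong H^0(X,\ring X.(m(K_X+\Phi)))$ for every $m$ with $m\Delta$ and $m\Phi$ integral; together with the equality of positive parts this gives $\kappa(X,K_X+\Delta)=\kappa(X,K_X+\Phi)$ and $\kappa_{\sigma}(X,K_X+\Delta)=\kappa_{\sigma}(X,K_X+\Phi)$. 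A good minimal model has semi-ample, hence nef and abundant, log canonical class, and abundance is a birational invariant, so $K_X+\Phi$ and therefore $K_X+\Delta$ is abundant; by the existence of good minimal models for pseudo-effective abundant $\mathbb{Q}$-divisors on $\mathbb{Q}$-factorial divisorially log terminal pairs (cf. \cite{BCHM10}, \cite{FG11}, \cite{HX11}) the pair $(X,\Delta)$ has a good minimal model. A more hands-on alternative is to reduce, via the forward implication together with the identity $\Phi-\Phi\wedge N_{\sigma}(X,K_X+\Phi)=\Delta-\Delta\wedge N$, to the case $\Phi=\Delta-\Delta\wedge N$; then run $g\colon\rmap X.Z.$ the $(K_X+\Phi)$-MMP to a good minimal model with $K_Z+\Phi_Z$ semi-ample, and conclude by running a minimal model program for $(Z,g_*\Delta)$ over the ample model of $(X,\Phi)$, checking with \eqref{l_only} and \eqref{l_equivalent} that the outcome is a good minimal model.

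The reverse implication is the step I expect to be the real obstacle. The difficulty is that the minimal model program one runs for $K_X+\Phi$ is not literally a run of the $(K_X+\Delta)$-MMP, so transferring the existence of a good minimal model across the operation of adding a divisor supported inside $N_{\sigma}$ requires the $N_{\sigma}$-hypothesis in an essential way --- whether through the abundance input invoked above, or through a careful analysis of the auxiliary program over the ample model of $(X,\Phi)$ in the self-contained approach.
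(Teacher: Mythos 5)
The forward implication in your proposal is correct and is essentially the paper's argument: run the $(K_X+\Delta)$-MMP with scaling until every component of $N_\sigma(X,K_X+\Delta)$ is contracted, resolve, and check that $F=E-p^*(\Delta-\Phi)\geq 0$. (The paper gets $F\geq 0$ from the negativity lemma applied to $p$, using $p_*F=N_\sigma(X,K_X+\Delta)-(\Delta-\Phi)\geq 0$; your inequality $E\geq p^*N_\sigma(X,K_X+\Delta)=p^*p_*E$ is equivalent and can itself be proved by that same negativity argument, so this direction is fine.)

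The reverse implication is where your proposal has a genuine gap, and you correctly sense it. Your first route rests on the assertion that a pseudo-effective, abundant, $\mathbb{Q}$-factorial dlt pair has a good minimal model, citing \cite{BCHM10}, \cite{FG11}, \cite{HX11}. No such theorem is in those references: \cite{BCHM10} treats klt pairs with big boundary, and for dlt pairs with reduced components one needs abundance of the restrictions to all the lc centres (log abundance), which is exactly what Section \ref{generic-abundance} of this paper labours to establish inductively; if the statement you invoke were available, most of that section would be unnecessary. Your second route is the right strategy --- continue past the good minimal model of $(X,\Phi)$ with an auxiliary MMP over the ample model of $(X,\Phi)$ --- but it omits the mechanism that makes it work. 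The paper's proof sets $\Delta_t=\Phi+t(\Delta-\Phi)$ and takes $0<t<1/(1+2nr)$, where $r$ is the Cartier index of $K_X+\Phi$ on its minimal model; \eqref{l_index} (a length-of-extremal-rays estimate) then forces every further step of the $(K_X+\Delta_t)$-MMP with scaling to be $(K_X+\Phi)$-trivial, hence to take place over the ample model $Z$, so that the pushforward of $K_X+\Phi$ remains semi-ample throughout. By \eqref{l_rational} finitely many such steps contract all components of $N_\sigma(X,K_X+\Delta_t)$, whose support equals that of $N_\sigma(X,K_X+\Delta)$ and contains $\operatorname{Supp}(\Delta-\Phi)$; hence the resulting $g$ satisfies $g_*(K_X+\Delta)=g_*(K_X+\Phi)$, which is semi-ample, and \eqref{l_only} identifies $g$ as a minimal model of $(X,\Delta)$. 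Without the interpolation $\Delta_t$ and the explicit bound on $t$ from \eqref{l_index}, one has no control ensuring the auxiliary MMP terminates and yields a $(K_X+\Delta)$-non-positive contraction, so as written the second route is a plan rather than a proof.
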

\begin{proof} Suppose that $f\colon\rmap X.Y.$ is a minimal model of $(X,\Delta)$.  Let
$\Gamma=f_*\Delta$.  (2) of \eqref{l_rational} implies that $f$ contracts every component
of $N_{\sigma}(X,K_X+\Delta)$ so that
\[
f_*(K_X+\Delta)=K_Y+\Gamma=f_*(K_X+\Phi).
\]
Let $p\colon\map W.X.$ and $q\colon\map W.Y.$ resolve $f$.  If we write
\[
p^*(K_X+\Delta)=q^*(K_Y+\Gamma)+E,
\]
then $E\geq 0$ is $q$-exceptional and $p_*E=N_{\sigma}(X,K_X+\Delta)$.  It follows that 
if we write 
\[
p^*(K_X+\Phi)=q^*(K_Y+\Gamma)+F,
\]
then 
\[
F=E-p^*(\Delta-\Phi)\geq E-p^*(N_{\sigma}(X,K_X+\Delta))=E-p^*p_*E.
\]
As $E-p^*p_*E$ is $p$-exceptional, $p_*F\geq 0$ by the negativity lemma and so $f$ is a
weak log canonical model of $(X,\Phi)$.  If $f$ is a good minimal model of $(X,\Delta)$
then $f$ is a semi-ample model of $(X,\Phi)$ and so $(X,\Phi)$ has a good minimal model by
\eqref{l_equivalent}.

Now suppose that $(X,\Phi)$ has a good minimal model.  We may run the $(K_X+\Phi)$-MMP
until we get a minimal model $f\colon\rmap X.Y.$ of $(X,\Phi)$.  Let $\map Y.Z.$ be
the ample model of $K_X+\Phi$.

If $t>0$ is sufficiently small then $f$ is also a run of the $(K_X+\Delta_t)$-MMP, where
\[
\Delta_t=\Phi+t(\Delta-\Phi).
\]  

Let $n$ be the dimension of $X$ and let $r$ be a positive integer such that $r(K_X+\Phi)$ 
is Cartier.  If 
\[
0<t<\frac 1{1+2nr}
\]
and we continue to run the $(K_X+\Delta_t)$-MMP with scaling of an ample divisor then
\eqref{l_index} (taking $U$ to be a point) implies that every step of this MMP is
$(K_X+\Phi)$-trivial, so that every step is over $Z$.  After finitely many steps
\eqref{l_rational} implies that we obtain a model $g\colon\rmap X.W.$ which contracts the
components of $N_{\sigma}(X, K_X+\Delta_t)$.  As the support of
$N_{\sigma}(X, K_X+\Delta)$ is the same as the support of $N_{\sigma}(X, K_X+\Delta_t)$
and the support of $\Delta-\Phi$ is contained in $N_{\sigma}(X, K_X+\Delta)$ it follows
that
\[
g_*(K_X+\Delta)=g_*(K_X+\Phi).
\]
Thus $g_*(K_X+\Delta)$ is semi-ample.  On the other hand $g$ only contracts divisors in
$N_{\sigma}(X,K_X+\Delta)$ so that \eqref{l_only} implies that $g$ is a minimal model of
$(X,\Delta)$.  Thus $g\colon\rmap X.W.$ is a good minimal model of $(X,\Delta)$.
\end{proof}

\section{Abundance in families}
\label{generic-abundance}

\begin{lemma}\label{l_one-smooth} Suppose that $(X,\Delta)$ is a log pair where the
coefficients of $\Delta$ belong to $(0,1]\cap \mathbb{Q}$.  Let $\pi\colon\map X.U.$ be a
projective morphism to a smooth affine variety $U$.  Suppose that $(X,\Delta)$ is log
smooth over $U$.

If there is a closed point $0\in U$ such that the fibre $(X_0,\Delta_0)$ has a good
minimal model then the generic fibre $(X_{\eta},\Delta_{\eta})$ has a good minimal model.
\end{lemma}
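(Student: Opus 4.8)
The plan is to argue by induction on $\dim X$, assuming \eqref{l_one-smooth} for all families of pairs of smaller dimension. Write $S=\rfdown\Delta.$ and $B=\Delta-S$.

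\textit{Step 1 (reductions).} First I would reduce to the case where $\mathbf{B}_-(X_0,K_{X_0}+\Delta_0)$ contains no non-canonical centre of $(X_0,\Delta_0)$. Since $(X_0,\Delta_0)$ is log smooth and has a good minimal model, hence a weak log canonical model, \eqref{l_up} produces a sequence of smooth blow ups $\pi_0\colon Y_0\to X_0$ of strata of $\Delta_0$ such that, writing $K_{Y_0}+\Gamma_0=\pi_0^*(K_{X_0}+\Delta_0)+E_0$ with no common components and $\Gamma_0'=\Gamma_0-\Gamma_0\wedge N_{\sigma}(Y_0,K_{Y_0}+\Gamma_0)$, the locus $\mathbf{B}_-(Y_0,K_{Y_0}+\Gamma_0')$ contains no stratum of $\Gamma_0'$, in particular no non-canonical centre. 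As $(X,\Delta)$ is log smooth over $U$, these blow ups are the restrictions of smooth blow ups $Y\to X$ over $U$ of the corresponding strata of $\Delta$, so $(Y,\Gamma)$ is again log smooth over $U$, $Y$ is smooth, and \eqref{l_up} gives that $\Gamma'=\Gamma-\Gamma\wedge N_{\sigma}(Y,K_Y+\Gamma)$ is a $\mathbb{Q}$-divisor with coefficients in $(0,1]$; by \eqref{l_commute}, $\Gamma'|_{Y_0}=\Gamma_0'$. Combining \cite[2.10]{HX11}, \eqref{l_base} and \eqref{l_def} (the last to see that $N_{\sigma}$ commutes with restriction to the generic fibre of $U$), whether the central, resp. generic, fibre has a good minimal model is unaffected by replacing $(X,\Delta)$ with $(Y,\Gamma')$. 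Thus I may assume $X$ is smooth (hence $\mathbb{Q}$-factorial), $(X,\Delta)$ is log smooth over the smooth affine $U$, and $\mathbf{B}_-(X_0,K_{X_0}+\Delta_0)$ contains no non-canonical centre of $(X_0,\Delta_0)$.

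\textit{Step 2 (minimal model of the generic fibre; reduction to semi-ampleness; the boundary).} Now the hypotheses of \eqref{l_generic-minimal} hold, so running the $(K_X+\Delta)$-MMP with scaling of an ample divisor gives $f\colon\rmap X.Y.$ with $f_{\eta}\colon\rmap X_{\eta}.Y_{\eta}.$ a minimal model of $(X_{\eta},\Delta_{\eta})$, $f_0\colon\rmap X_0.Y_0.$ a semi-ample model of $(X_0,\Delta_0)$, and, for each component $D$ of $S$ with $E=f(D)$, $\phi_0\colon\rmap D_0.E_0.$ a semi-ample model of $(D_0,\Sigma_0)$, where $(K_{X_0}+\Delta_0)|_{D_0}=K_{D_0}+\Sigma_0$. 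Set $\Gamma=f_*\Delta$. Since $f_{\eta}$ is a weak log canonical model, \eqref{l_equivalent} (over $\overline{k(\eta)}$ via \eqref{t_non-closed} if one wishes to work over an algebraically closed field) reduces the problem to proving $K_{Y_{\eta}}+\Gamma_{\eta}$ is semi-ample. For a component $D$ of $S$, the pair $(D,\Sigma)$ with $(K_X+\Delta)|_D=K_D+\Sigma$ is log smooth over $U$, has coefficients in $(0,1]\cap\mathbb{Q}$, and $\dim D<\dim X$; moreover $(D_0,\Sigma_0)$ has a semi-ample model hence a good minimal model by \eqref{l_equivalent}, so by the inductive hypothesis $(D_{\eta},\Sigma_{\eta})$ has a good minimal model. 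By \eqref{l_central} and the standard behaviour of the MMP on components of the reduced boundary, $\phi_{\eta}\colon\rmap D_{\eta}.E_{\eta}.$ is a weak log canonical model of $(D_{\eta},\Sigma_{\eta})$, hence by \eqref{l_equivalent} a semi-ample model, so $(K_{Y_{\eta}}+\Gamma_{\eta})|_{E_{\eta}}=K_{E_{\eta}}+\Theta_{\eta}$ is semi-ample. As $\rfdown\Gamma_{\eta}.$ together with its different is divisorially semi log terminal and the restriction of $K_{Y_{\eta}}+\Gamma_{\eta}$ to each of its components is semi-ample, \eqref{t_slc} gives that $(K_{Y_{\eta}}+\Gamma_{\eta})|_{\rfdown\Gamma_{\eta}.}$ is semi-ample.

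\textit{Step 3 (two cases).} Write $T=\rfdown\Gamma.$ and $B_Y=\Gamma-T$. If $K_{Y_{\eta}}+\Gamma_{\eta}-\epsilon T_{\eta}$ is not pseudo-effective for any $\epsilon>0$ then, since $(Y_0,\Gamma_0)$ is log canonical and $K_{Y_0}+\Gamma_0$ is nef, \eqref{l_generic-fano} lets me run, for small rational $\epsilon>0$, a $(K_Y+(1-\epsilon)T+B_Y)$-MMP $\rmap Y.W.$ over $U$ whose every step is $(K_Y+\Gamma)$-trivial near $Y_0$, hence $(K_{Y_{\eta}}+\Gamma_{\eta})$-trivial, ending at a Mori fibre space $\psi\colon W\to Z$ with $K_W+\Gamma_W\sim_{\mathbb{Q}}\psi^*L$ and some component $E_W$ of $\rfdown\Gamma_W.$ — the strict transform of a component $E$ of $T$ — dominating $Z$. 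The restriction of $\rmap Y_{\eta}.W_{\eta}.$ to $E_{\eta}$ is again $(K_{E_{\eta}}+\Theta_{\eta})$-trivial, so by Step 2 the divisor $(K_{W_{\eta}}+\Gamma_{W_{\eta}})|_{E_{W,\eta}}$ is semi-ample; being the pullback of $L_{\eta}$ along the surjection $E_{W,\eta}\to Z_{\eta}$, $L_{\eta}$ is semi-ample, whence $K_{W_{\eta}}+\Gamma_{W_{\eta}}=\psi_{\eta}^*L_{\eta}$, and then $K_{Y_{\eta}}+\Gamma_{\eta}$, is semi-ample. Otherwise $K_{Y_{\eta}}+\Gamma_{\eta}-\epsilon T_{\eta}$ is pseudo-effective for some small rational $\epsilon>0$, equivalently $K_X+(1-\epsilon)S+B$ is pseudo-effective over $U$; then $(X,(1-\epsilon)S+B)$ is kawamata log terminal and log smooth over $U$, so by \eqref{t_inv} (which rests on \eqref{t_BP}) the Iitaka dimension of $K_{X_u}+(1-\epsilon)S_u+B_u$ is independent of $u$, and combining this with \eqref{l_one} for the coefficient-one part, with \eqref{l_def}, and with the abundance of $K_{X_0}+\Delta_0$ (immediate since $(X_0,\Delta_0)$ has a good minimal model), one deduces that $K_{X_{\eta}}+\Delta_{\eta}$, equivalently $K_{Y_{\eta}}+\Gamma_{\eta}$, is abundant. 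Then $aH-(K_{Y_{\eta}}+\Gamma_{\eta})=(a-1)(K_{Y_{\eta}}+\Gamma_{\eta})$ is nef and abundant for every $a>1$, and $(K_{Y_{\eta}}+\Gamma_{\eta})|_{T_{\eta}}$ is semi-ample by Step 2, so \eqref{t_bpf} shows $H=K_{Y_{\eta}}+\Gamma_{\eta}$ is semi-ample. In either case $f_{\eta}$ is a good minimal model of $(X_{\eta},\Delta_{\eta})$.

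\textit{Main obstacle.} The delicate point is the second case of Step 3: transporting abundance from the special fibre to the generic fibre. Invariance of the Iitaka dimension is only directly available (via \eqref{t_inv}) in the kawamata log terminal range, so one must pass to the limit $\epsilon\to0$ while simultaneously controlling the numerical dimensions of the restrictions of $K_X+\Delta$ to the fibres through Nakayama's divisor $N_{\sigma}$ and \eqref{l_def}; it is precisely here that the defining hypothesis of the second case — that $K_X+(1-\epsilon)S+B$ stays pseudo-effective — is used in an essential way. A secondary technical point is verifying, in the first case, that a component of the reduced boundary genuinely dominates the base of the Mori fibre space produced by \eqref{l_generic-fano} and that semi-ampleness descends along it.
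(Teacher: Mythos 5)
Your proposal is correct and follows essentially the same route as the paper: reduce via \eqref{l_up}, \eqref{l_commute} and \eqref{l_base} to the case where $\mathbf{B}_-(X_0,K_{X_0}+\Delta_0)$ avoids the strata, run the relative MMP via \eqref{l_generic-minimal}, handle the reduced boundary by induction on dimension, and split into the Mori-fibre-space case (\eqref{l_generic-fano} plus descent of semi-ampleness along a dominating boundary component) and the pseudo-effective case (\eqref{t_inv}, abundance, \eqref{t_slc}, \eqref{t_bpf}). The only points where the paper is more careful are the preliminary finite base change making the strata of $\Delta$ have irreducible fibres (needed so that your Step 1 blow ups of strata of $\Delta_0$ actually extend over $U$, justified by \eqref{t_non-closed}), and the explicit chain of equalities in the second case showing $\kappa(X_0,K_{X_0}+(1-\epsilon)S_0+B_0)=\kappa(X_0,K_{X_0}+\Delta_0)$ because $T_0$ does not dominate the log canonical model of $(Y_0,\Gamma_0)$ — which is exactly the step your closing paragraph flags.
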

\begin{proof} By \eqref{t_non-closed} it is enough to prove that the geometric generic
fibre has a good minimal model.  Replacing $U$ by a finite cover we may therefore assume
that $\pi$ is a contraction morphism and the strata of $\Delta$ have irreducible fibres
over $U$.

Let $f_0\colon\map Y_0.X_0.$ be the birational morphism given by \eqref{l_up}.  As
$(X,\Delta)$ is log smooth over $U$, the strata of $\Delta$ have irreducible fibres over
$U$ and $f_0$ blows up strata of $\Delta_0$, we may extend $f_0$ to a birational morphism
$f\colon\map Y.X.$ which is a composition of smooth blow ups of strata of $\Delta$.  We may
write
\[
K_Y+\Gamma=f^*(K_X+\Delta)+E,
\]
where $\Gamma\geq 0$ and $E\geq 0$ have no common components, $f_*\Gamma=\Delta$ and
$f_*E=0$.  $(Y,\Gamma)$ is log smooth and the fibres of the components of $\Gamma$ are
irreducible.  \cite[2.10]{HX11} implies that $(Y_0,\Gamma_0)$ has a good minimal model, as
$(X_0,\Delta_0)$ has a good minimal model; similiarly \cite[2.10]{HX11} also implies that
if $(Y_{\eta},\Gamma_{\eta})$ has a good minimal model then $(X_{\eta},\Delta_{\eta})$ has
a good minimal model.

Replacing $(X,\Delta)$ by $(Y,\Gamma)$ we may assume that if 
\[
\Theta_0=\Delta_0-\Delta_0\wedge N_{\sigma}(X_0,K_{X_0}+\Delta_0)
\]
then $\mathbf{B}_-(X_0,K_{X_0}+\Theta_0)$ contains no strata of $\Theta_0$.  There is a
unique divisor $0\leq \Theta\leq \Delta$ such that $\Theta|_{X_0}=\Theta_0$.
\eqref{l_commute} implies that
\[
\Theta=\Delta-\Delta\wedge N_{\sigma}(X,K_X+\Delta)
\]
so that 
\[
\Delta-\Theta\leq N_{\sigma}(X,K_X+\Delta).
\]
Hence by \eqref{l_base} and \eqref{t_non-closed} it suffices to prove that
$(X_{\eta},\Theta_{\eta})$ has a good minimal model.  Replacing $(X,\Delta)$ by
$(X,\Theta)$ we may assume that $\mathbf{B}_-(X_0,K_{X_0}+\Delta_0)$ contains no strata of
$\Delta_0$.  \eqref{l_generic-minimal} implies that we can run $f\colon\rmap X.Y.$ the
$(K_X+\Delta)$-MMP over $U$ to obtain a minimal model of the generic fibre.  Let
$\Gamma=f_*\Delta$.

Pick a component $D$ of $\rfdown\Delta.$.  Let $\phi\colon\rmap D.E.$ be the restriction
of $f$ to $D$.  \eqref{l_generic-minimal} implies that $\phi_0$ is a semi-ample model of
$(D_0,(\Delta_0-D_0)|_{D_0})$.  \eqref{l_equivalent} implies that
$(D_0,(\Delta_0-D_0)|_{D_0})$ has a good minimal model.  By induction on the dimension
$(D_{\eta},(\Delta_{\eta}-D_{\eta})|_{D_{\eta}})$ has a good minimal model.  But then
$\phi_{\eta}\colon\rmap D_{\eta}.E_{\eta}.$ is a semi-ample model of
$(D_{\eta},(\Delta_{\eta}-D_{\eta})|_{D_{\eta}})$.

Let $S=\rfdown\Delta.$ and $B=\{\Delta\}=\Delta-S$.  Let $T=f_*S$ and $C=f_*B$.  Suppose
that $K_{Y_0}+(1-\epsilon)T_0+C_0$ is not pseudo-effective for any $\epsilon>0$.  Then
$K_{X_0}+(1-\epsilon)S_0+B_0$ is not pseudo-effective for any $\epsilon>0$.  It follows easily  
that $K_X+(1-\epsilon)S+B$ is not pseudo-effective for any $\epsilon>0$.  But then
$K_Y+(1-\epsilon)T+C$ is not pseudo-effective for any $\epsilon>0$.
\eqref{l_generic-fano} implies that we may run the $(K_Y+(1-\epsilon)T+C)$-MMP until we
get to a Mori fibre space $g\colon\rmap Y.W.$, $\psi\colon\map W.V.$ over $U$.  By
assumption $g_*(K_Y+\Gamma)\sim_{\mathbb{Q}}\psi^*L$ for some divisor $L$.

Pick a component $D$ of $S$ whose image $F$ in $W$ dominates $V$.  Let $E$ be the image of
$D$ in $Y$.  As we already observed, $\phi_{\eta}\colon\rmap D_{\eta}.E_{\eta}.$ is a
semi-ample model of $(D_{\eta},(\Delta_{\eta}-D_{\eta})|_{D_{\eta}})$.  As the birational
map $g_0\colon\rmap Y_0.W_0.$ is $(K_{Y_0}+\Gamma_0)$-trivial, the birational map
$g_{\eta}\colon\rmap Y_{\eta}.W_{\eta}.$ is also $(K_{Y_{\eta}}+\Gamma_{\eta})$-trivial.
Then $L_{\eta}$ is semi-ample as $(\psi^*L)|_{F_{\eta}}$ is semi-ample.  The composition
$\rmap X_{\eta}.W_{\eta}.$ is a semi-ample model of $(X_{\eta},\Delta_{\eta})$ and so
$(X_{\eta},\Delta_{\eta})$ has a good minimal model by \eqref{l_equivalent}.

Otherwise, $K_{Y_0}+(1-\epsilon)T_0+C_0$ is pseudo-effective for some $\epsilon>0$.  If
$\map Y_0.Z_0.$ is the log canonical model of $(Y_0,\Gamma_0)$ then $T_0$ does not
dominate $Z_0$ and so if $\epsilon$ is sufficiently small then
$K_{X_0}+(1-\epsilon)S_0+B_0$ has the same Kodaira dimension as $K_{X_0}+\Delta_0$.
\begin{align*} 
\kappa(X_{\eta},K_{X_{\eta}}+\Delta_{\eta})&\geq \kappa(X_{\eta},K_{X_{\eta}}+(1-\epsilon)S_{\eta}+B_{\eta}.)\\
                                        &=\kappa(X_0,K_{X_0}+(1-\epsilon)S_0+B_0)\\
                                        &=\kappa(X_0,K_{X_0}+\Delta_0)\\
                                        &=\kappa_{\sigma}(X_0,K_{X_0}+\Delta_0)\\
                                        &=\nu(Y_0,K_{Y_0}+\Gamma_0)\\
                                        &=\nu(Y_{\eta},K_{Y_{\eta}}+\Gamma_{\eta}).
\end{align*} 

The first inequality holds as $S_{\eta}\geq 0$, the second equality holds by \eqref{t_inv}
(note that $(X_0,(1-\epsilon)S_0+B_0)$ is kawamata log terminal as $(X_0,\Delta_0)$ is
divisorially log terminal) and the last equality holds as intersection numbers are
deformation invariant.

We have already seen that if $E$ is a component of $T$ then $(K_Y+\Gamma)|_{E_{\eta}}$ is
semi-ample.  \eqref{t_slc} implies that $(K_Y+\Gamma)|_{T_{\eta}}$ is semi-ample.  Let
$H=K_{Y_{\eta}}+\Gamma_{\eta}$.  Then $H|_{T_{\eta}}$ is semi-ample and
$aH-(K_{Y_{\eta}}+\Gamma_{\eta})$ is nef and abundant for all $a>1$.  Thus
$f_{\eta}\colon\rmap X_{\eta}.Y_{\eta}.$ is a good minimal model by \eqref{t_bpf}.
\end{proof}

\begin{lemma}\label{l_global} Suppose that $(X,\Delta)$ is a log pair where the
coefficients of $\Delta$ belong to $(0,1]\cap \mathbb{Q}$.  Let $\pi\colon\map X.U.$ be a
projective morphism to a smooth affine variety $U$.  Suppose that $(X,\Delta)$ is log
smooth over $U$.

If $(X,\Delta)$ has a good minimal model then every fibre $(X_u,\Delta_u)$ has a good minimal 
model.   
\end{lemma}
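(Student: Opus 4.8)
The plan is to reduce to the case where $U$ is a curve and then to run the relative minimal model program and restrict it, fibre by fibre, to the special fibre.

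First, since $(X,\Delta)$ has a good minimal model over $U$, restricting it to the generic fibre and spreading out (cf.\ \cite[1.1]{HX11}) there is a dense open $U_0\subseteq U$ such that $(X_v,\Delta_v)$ has a good minimal model for every point $v\in U_0$. Fix a closed point $u\in U$, which we may assume lies outside $U_0$; we must produce a good minimal model of $(X_u,\Delta_u)$. Let $C\subseteq U$ be a general curve through $u$, the intersection of $\dim U-1$ general hyperplane sections through $u$; then $C$ is a smooth affine curve whose generic point $\eta_C$ lies in $U_0$, and $(X_C,\Delta_C)$ — with $X_C:=X\times_U C$ — is log smooth over $C$ with coefficients in $(0,1]\cap\mathbb{Q}$, because the strata of $\Delta$ are transverse to the fibres of $\map X.U.$. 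Its generic fibre, being the fibre over $\eta_C\in U_0$, has a good minimal model, so by \cite[1.1]{HX11} $(X_C,\Delta_C)$ has a good minimal model over $C$. Replacing $(X,\Delta)$, $U$ by $(X_C,\Delta_C)$, $C$, and performing a finite base change of $C$ (harmless by \eqref{t_non-closed}) so that the strata of $\Delta$ have irreducible fibres, we are reduced to the case where $U$ is a smooth affine curve and $S:=X_u=\pi^*u$ is a prime divisor.

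In this situation I would first apply \eqref{l_up}, \cite[2.10]{HX11} and \eqref{l_base} to replace $(X,\Delta)$ by a log smooth model over $U$ for which $\mathbf{B}_-(X,K_X+\Delta)$, computed over $U$, contains no strata of $\Delta$; this does not affect whether the fibres have good minimal models, and by \eqref{l_def} and \eqref{l_commute} it forces $\mathbf{B}_-(X_v,K_{X_v}+\Delta_v)$ to contain no non canonical centre of $(X_v,\Delta_v)$ for every closed point $v\in U$. Thus the hypotheses of \eqref{l_central} hold at $u$ with $H=\pi^*u$, so I may run the $(K_X+\Delta)$-MMP over $U$ with scaling of an ample divisor — it terminates by \cite[2.9]{HX11} since $(X,\Delta)$ has a good minimal model over $U$ — and, by \eqref{l_central} applied with $V=X$, at each step the restriction to $S$ is a $(K_S+\Delta_u)$-non-positive birational contraction carrying $S$ onto the fibre over $u$ of the next model. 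Let $g\colon\rmap X.Y.$ be the resulting good minimal model over $U$, so $K_Y+\Gamma$ is semi-ample over $U$, say $K_Y+\Gamma=\psi^*A$ with $\psi\colon\map Y.Z.$ over $U$ and $A$ ample over $U$. Since $S=\pi^*u$ is numerically trivial over $U$, this MMP is simultaneously the $(K_X+\Delta+S)$-MMP over $U$, so $(Y,\Gamma+Y_u)$ is divisorially log terminal with $Y_u=\pi_Y^*u$; adjunction, together with $\sO_Y(Y_u)|_{Y_u}\cong\sO_{Y_u}$, gives $(K_Y+\Gamma)|_{Y_u}=K_{Y_u}+\Theta_u$ with $(Y_u,\Theta_u)$ semi log canonical, and this divisor is semi-ample as it equals $(\psi|_{Y_u})^*(A|_{Z_u})$ with $A|_{Z_u}$ ample. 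Hence $g_u\colon\rmap S.Y_u.$ is a birational contraction which is $(K_S+\Delta_u)$-non-positive with $K_{Y_u}+\Theta_u$ semi-ample, that is, a semi-ample model of $(X_u,\Delta_u)$; as $(X_u,\Delta_u)$ is log smooth, hence $\mathbb{Q}$-factorial and divisorially log terminal, \eqref{l_equivalent} then shows that $(X_u,\Delta_u)$ has a good minimal model.

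The delicate step is the control of the relative minimal model program on the special fibre $S=X_u$: a priori a step of the $(K_X+\Delta)$-MMP over $U$ might flip a locus sitting inside $S$ and so fail to restrict to a birational contraction of $S$. Ruling this out is precisely why one first passes, via \eqref{l_up}, to a model with $\mathbf{B}_-(X,K_X+\Delta)$ containing no strata of $\Delta$ and then propagates this property to every fibre using the deformation-invariance lemmas \eqref{l_def} and \eqref{l_commute}; once \eqref{l_central} is available, identifying $g_u$ as a semi-ample model and invoking \eqref{l_equivalent} is routine.
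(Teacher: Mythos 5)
Your argument from the second paragraph onwards is essentially the paper's own proof: pass via \eqref{l_up}, \cite[2.10]{HX11}, \eqref{l_base}, \eqref{l_def} and \eqref{l_commute} to a log smooth model on which the diminished base locus of the fibre contains no strata of $\Delta_u$, run the relative $(K_X+\Delta)$-MMP with scaling, use \eqref{l_central} to see that its restriction to the fibre over $u$ is a semi-ample model, and conclude with \eqref{l_equivalent}.

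The problem is your opening reduction to a one-dimensional base. You assert that, by spreading out, there is a dense open $U_0\subseteq U$ such that $(X_v,\Delta_v)$ has a good minimal model for \emph{every} point $v\in U_0$, and you apply this to the non-closed point $\eta_C$, the generic point of a general curve $C$ through $u$. Spreading out the good minimal model of the generic fibre only produces a \emph{relative} good minimal model over some dense open $U_0$; that its restriction to the fibre over an arbitrary point of $U_0$ is again a good minimal model of that fibre is not automatic --- it is exactly the kind of statement the lemma is trying to prove, and it is the reason \eqref{l_central} is needed at all (a step of a relative MMP may fail to restrict to a birational contraction of a non-generic fibre). One could repair this with a further equisingularity or constructibility argument over a smaller open set, or by checking directly that $Y\times_U C$ is a semi-ample model of $(X_C,\Delta_C)$ for a general curve $C$ through $u$, but you do neither; as written the claim is close to circular. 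Fortunately the detour is unnecessary: \eqref{l_central} and \eqref{l_generic-minimal} are set up for a base of arbitrary dimension, and for a log smooth family hypotheses (1) and (2) of \eqref{l_central} hold at $u$ upon taking $k=\dim U$ general divisors through $u$ (this is precisely how the proof of \eqref{l_generic-minimal} proceeds). So you should delete the first paragraph and run the rest of your argument over $U$ itself, which then coincides with the proof in the paper.
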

\begin{proof} Replacing $U$ by a finite cover we may assume that $\pi$ is a contraction
morphism and the strata of $\Delta$ have irreducible fibres over $U$.

Let $f\colon\map Y.X.$ be the birational morphism given by \eqref{l_up}.  We may write
\[
K_Y+\Gamma=f^*(K_X+\Delta)+E,
\]
where $\Gamma\geq 0$ and $E\geq 0$ have no common components, $f_*\Gamma=\Delta$ and
$f_*E=0$.  $(Y,\Gamma)$ is log smooth.  \cite[2.10]{HX11} implies that $(Y,\Gamma)$ has a
good minimal model, as $(X,\Delta)$ has a good minimal model; similiarly \cite[2.10]{HX11}
also implies that if $(Y_u,\Gamma_u)$ has a good minimal model then
$(X_u,\Delta_u)$ has a good minimal model.

Replacing $(X,\Delta)$ by $(Y,\Gamma)$ we may assume that if 
\[
\Theta=\Delta-\Delta\wedge N_{\sigma}(X,K_X+\Delta)
\]
then $\mathbf{B}_-(X,K_X+\Theta)$ contains no strata of $\Theta$.  As 
\[
\Delta-\Theta\leq N_{\sigma}(X,K_X+\Delta)
\]
\eqref{l_base} implies that $(X,\Theta)$ has a good minimal model.  \eqref{l_commute}
implies that
\[
\Theta_u=\Delta_u-\Delta_u\wedge N_{\sigma}(X_u,K_{X_u}+\Delta_u)
\]
so that $\mathbf{B}_-(X_u,K_{X_u}+\Theta_u)$ contains no strata of $\Theta_u$.  Hence
\[
\Delta_u-\Theta_u\leq N_{\sigma}(X_u,K_{X_u}+\Delta_u).
\]
Hence by \eqref{l_base} it suffices to prove that $(X_u,\Theta_u)$ has a good minimal
model.  Replacing $(X,\Delta)$ by $(X,\Theta)$ we may assume that
$\mathbf{B}_-(X_u,K_{X_u}+\Delta_u)$ contains no strata of $\Delta_u$.

Let $A$ be an ample divisor over $U$.  \cite[2.7]{HX11} implies that the
$(K_X+\Delta)$-MMP with scaling of $A$ terminates $\pi\colon\rmap X.Y.$ with a good
minimal model for $(X,\Delta)$ over $U$.  Since $\mathbf{B}_-(X_u,K_{X_u}+\Delta_u)$
contains no strata of $\Delta_u$, \eqref{l_central} implies that $\pi_u\colon\rmap
X_u.Y_u.$ is a semi-ample model of $(X_u,\Delta_u)$.  \eqref{l_equivalent} implies that 
$(X_u,\Delta_u)$ has a good minimal model. 
\end{proof}

\begin{proof}[Proof of \eqref{t_one-smooth}] By \eqref{l_one-smooth} the generic fibre
$(X_{\eta},\Delta_{\eta})$ has a good minimal model.  Hence we may find a good minimal
model of $\pi^{-1}(U_0)$ over an open subset $U_0$ of $U$.  As $(X,\Delta)$ is log smooth
over $U$, every strata of $S=\rfdown\Delta.$ intersects $\pi^{-1}(U_0)$.  Therefore we may
apply \cite[1.1]{HX11} to conclude that $(X,\Delta)$ has a good minimal model over $U$.
\eqref{l_global} implies that every fibre has a good minimal model.
\end{proof}

\begin{proof}[Proof of \eqref{c_dense}] By \eqref{l_dlt} we may assume that $(X,\Delta)$
is divisorially log terminal and every fibre $(X_u,\Delta_u)$ is divisorially log
terminal.

It suffices to prove that if $U_0$ is dense then it contains an open subset.  Let
$\pi\colon\map Y.X.$ be a log resolution.  We may write
\[
K_Y+\Gamma=\pi^*(K_X+\Delta)+E,
\]
where $\Gamma\geq 0$ and $E\geq 0$ have no common components.  Passing to an open subset
we may assume that $(Y,\Gamma)$ is log smooth over $U$, so that 
\[
K_{Y_u}+\Gamma_u=\pi^*(K_{X_u}+\Delta_u)+E_u,
\]
for all $u\in U$.  \cite[2.10]{HX11} implies that if $(Y,\Gamma)$ has a good minimal model
over $U$ then $(X,\Delta)$ has a good minimal model over $U$.  Similarly \cite[2.10]{HX11}
implies that if $(X_u,\Delta_u)$ has a good minimal model then $(Y_u,\Gamma_u)$ has a good
minimal model.

Replacing $(X,\Delta)$ by $(Y,\Gamma)$ we may assume that $(X,\Delta)$ is log smooth over
$U$.  \eqref{t_one-smooth} implies that $U_0=U$.
\end{proof}

\begin{lemma}\label{l_deformation} Let $\pi\colon\map X.U.$ be a projective morphism 
to a smooth variety $U$ and let $(X,\Delta)$ be log smooth over $U$.  Suppose that the
coefficients of $\Delta$ belong to $(0,1]\cap \mathbb{Q}$.

If there is a closed point $0\in U$ such that the fibre $(X_0,\Delta_0)$ has a good
minimal model then the restriction morphism
\[
\map {\pi_*\ring X.(m(K_X+\Delta))}.{H^0(X_0,\ring X_0.(m(K_{X_0}+\Delta_0)))}.
\]
is surjective for any $m\in\mathbb{N}$ such that $m\Delta$ is integral.
\end{lemma}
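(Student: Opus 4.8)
The plan is to deduce the surjectivity from the existence result \eqref{t_one-smooth} together with a Koll\'ar-type injectivity theorem on a relative good minimal model.

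First I would reduce to the case $\dim U=1$. The statement is local on $U$, so we may assume $U$ is affine; cutting with general hyperplanes through $0$ we may then assume that $U$ is a smooth affine curve and $0\in U$ a closed point, since for a general such curve $(X,\Delta)$ remains log smooth over it, the fibre over $0$ is unchanged, and the formation of $\pi_*\ring X.(m(K_X+\Delta))$ commutes with this base change. As $m\Delta$ is integral and $(X,\Delta)$ is log smooth over $U$, the divisor $m(K_X+\Delta)$ is Cartier, $X_0$ is a Cartier divisor with $\ring X.(X_0)\cong\ring X.$, and $(K_X+\Delta)|_{X_0}=K_{X_0}+\Delta_0$; hence the claim is equivalent to the injectivity of multiplication by a local parameter of $U$ on $H^1(X,\ring X.(m(K_X+\Delta)))$.

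Next I would pass to a relative good minimal model. By \eqref{t_one-smooth} the pair $(X,\Delta)$ has a good minimal model over $U$ and every fibre has one. Before running the minimal model program I would, as in \eqref{l_global}, apply \eqref{l_up} and then replace $\Delta$ by $\Theta=\Delta-\Delta\wedge N_{\sigma}(X,K_X+\Delta)$; by \eqref{l_commute} and \eqref{l_base} the subtracted part lies in the stable base locus both on $X$ and on $X_0$, so this changes neither $H^0(X,\ring X.(m(K_X+\Delta)))$ nor $H^0(X_0,\ring X_0.(m(K_{X_0}+\Delta_0)))$, and we may assume that $\mathbf{B}_-(X_0,K_{X_0}+\Delta_0)$ contains no stratum of $\Delta_0$. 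Running the $(K_X+\Delta)$-MMP over $U$ with scaling of an ample divisor then terminates with a relative good minimal model $f\colon\rmap X.Y.$ by \cite[2.7]{HX11}; writing $\Gamma=f_*\Delta$, the divisor $K_Y+\Gamma$ is semi-ample over $U$, and by \eqref{l_central} and \eqref{l_generic-minimal} the pair $(Y,Y_0+\Gamma)$ is divisorially log terminal, $Y_0$ is reduced, $(K_Y+\Gamma)|_{Y_0}=K_{Y_0}+\Gamma_0$, no stratum of $(Y,\rfdown\Gamma.)$ is contained in $Y_0$, and $f_0\colon\rmap X_0.Y_0.$ is a semi-ample model of $(X_0,\Delta_0)$. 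Since $f$ and $f_0$ are non-positive birational contractions for $K_X+\Delta$ and $K_{X_0}+\Delta_0$ respectively, the spaces of sections of $m(K_X+\Delta)$ and of $m(K_Y+\Gamma)$ coincide, and similarly over $0$, so it suffices to prove that $H^0(Y,\ring Y.(m(K_Y+\Gamma)))\to H^0(Y_0,\ring Y_0.(m(K_{Y_0}+\Gamma_0)))$ is surjective.

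Finally I would establish this surjectivity by a generalised injectivity theorem, as in the proof of \eqref{l_one}. Since $Y_0$ is a reduced divisor with $\ring Y.(Y_0)\cong\ring Y.$, it is equivalent to show that multiplication by a local parameter of $U$ is injective on $H^1(Y,\ring Y.(m(K_Y+\Gamma)))$. Writing
\[
m(K_Y+\Gamma)=K_Y+\rfdown\Gamma.+\bigl((m-1)(K_Y+\Gamma)+\{\Gamma\}\bigr),
\]
the divisor $\rfdown\Gamma.$ is reduced, $\{\Gamma\}$ has coefficients in $(0,1)$, and $(m-1)(K_Y+\Gamma)$ is semi-ample over $U$. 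The required injectivity then follows from the versions of Koll\'ar's injectivity theorem due to Koll\'ar, Ambro and Fujino (in the form valid for divisorially log terminal pairs), applied exactly as the coefficient-one part is treated in \eqref{l_one} and the fractional part in \eqref{t_BP} and \eqref{t_inv}. The step I expect to be the main obstacle is the verification of the hypotheses of this injectivity theorem in the non log smooth setting of the minimal model: one must know that no log canonical centre of $(Y,\rfdown\Gamma.)$ meets the central fibre in the wrong way and that the semi-ample piece is genuinely semi-ample over $U$, and this is precisely where the preceding reductions are used, namely that $\mathbf{B}_-$ contains no stratum, that $Y_0$ is reduced and $f_0$ is a semi-ample model by \eqref{l_central}, and that $f$ terminates at a relative good minimal model. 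Granting the injectivity, every section on $X_0$ extends to $X$, which proves the lemma.
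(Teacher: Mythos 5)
Your reductions track the paper's own proof (cut $U$ down to a curve, use \eqref{l_up} and \eqref{l_commute} to replace $\Delta$ by $\Theta=\Delta-\Delta\wedge N_{\sigma}(X,K_X+\Delta)$, and run the relative MMP to a minimal model $f\colon\rmap X.Y.$ whose behaviour on the central fibre is controlled by \eqref{l_central}), but the step you yourself flag as the main obstacle is a genuine gap, and it is not one that the preceding reductions resolve. You propose to apply the injectivity theorem directly on the minimal model $Y$, using the decomposition $m(K_Y+\Gamma)=K_Y+\rfdown\Gamma.+\bigl((m-1)(K_Y+\Gamma)+\{\Gamma\}\bigr)$. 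The difficulty is that $Y$ is produced by an MMP, so $m(K_Y+\Gamma)$ is in general only $\mathbb{Q}$-Cartier, not Cartier, even though $m\Delta$ is integral: flips and divisorial contractions create singularities of uncontrolled Cartier index. For a Weil divisor that fails to be Cartier along $Y_0$ you lose both the short exact restriction sequence that converts surjectivity of restriction into injectivity of multiplication by a local parameter on $H^1$, and the identification of $\ring Y.(m(K_Y+\Gamma))\otimes\ring Y_0.$ with $\ring Y_0.(m(K_{Y_0}+\Gamma_0))$; moreover the injectivity theorems invoked in the paper (\cite{Kollar86}, \cite[6.3]{Fujino09}, \cite[5.4]{Ambro14}, as used in \eqref{l_one}) are applied to line bundles on log smooth pairs, not to divisorial sheaves on a singular dlt model, and \eqref{t_BP} is likewise a statement about a log smooth klt family. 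The hypotheses you list as being supplied by the reductions (no stratum in $\mathbf{B}_-$, $Y_0$ reduced, $f_0$ a semi-ample model) do not touch this integrality problem.

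The paper's proof is organised precisely to avoid ever applying injectivity on $Y$. It takes a log resolution $p\colon\map W.X.$, $q\colon\map W.Y.$ of the graph of $f$ which is an isomorphism near the generic points of the non kawamata log terminal centres of $(X,X_0+\Theta)$ --- this uses the conclusion of \eqref{l_central} that $f$ is an isomorphism there, and is what guarantees $\rfup E.\geq 0$ in the formula $K_W+\Phi+W_0=p^*(K_X+X_0+\Theta)+E$. Writing $p^*((m-1)(K_X+\Theta))=q^*f_*((m-1)(K_X+\Theta))+F$, it introduces the honest Cartier divisor $L=\rfup p^*(m(K_X+\Theta))+E-F.$ on the smooth $W$, applies the injectivity theorem there with semi-ample part $q^*f_*((m-1)(K_X+\Theta))$ (the pullback of the divisor that is semi-ample over $U$ on $Y$), and then transfers the resulting surjectivity to the desired one via the two inclusions of linear systems $|m(K_{X_0}+\Delta_0)|\subset|L|_{W_0}$ and $|L|\subset|m(K_X+\Delta)|$. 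This round-up/round-down bookkeeping on $W$ is the real content of the proof and is absent from your argument. A secondary point: the paper first disposes of $m=1$ by \eqref{l_one} and assumes $m\geq 2$; since $\Theta$, and hence $\Gamma$, can acquire fractional coefficients even when $\Delta$ is integral, the case $m=1$ also needs the separate treatment the paper gives it.
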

\begin{proof} \eqref{l_one} implies that we may assume that $m\geq 2$.  Replacing $U$ by a
finite cover we may assume that $\pi$ is a contraction morphism and the strata of $\Delta$
have irreducible fibres over $U$.  Since the result is local we may assume that $U$ is
affine and so we want to show that the restriction map
\[
\map {H^0(X,\ring X.(m(K_X+\Delta)))}.{H^0(X_0, \ring X_0.(m(K_{X_0}+\Delta_0)))}.
\]
is surjective.  Cutting by hyperplanes we may assume that $U$ is a curve.  
Let $f_0\colon\map Y_0.X_0.$ be the birational morphism given by \eqref{l_up}.  As
$(X,\Delta)$ is log smooth over $U$, the strata of $\Delta$ have irreducible fibres over
$U$ and $f_0$ blows up strata of $\Delta_0$, we may extend $f_0$ to a birational morphism
$f\colon\map Y.X.$ which is a composition of smooth blow ups of strata of $\Delta$.  We may
write
\[
K_Y+\Gamma=f^*(K_X+\Delta)+E,
\]
where $\Gamma\geq 0$ and $E\geq 0$ have no common components, $f_*\Gamma=\Delta$ and
$f_*E=0$.  $(Y,\Gamma)$ is log smooth and the fibres of the components of $\Gamma$ are
irreducible.  Note that $m\Gamma$ is integral and the natural maps induce isomorphisms
\begin{align*} 
H^0(X,\ring X.(m(K_X+\Delta)))&\simeq H^0(Y,\ring Y.(m(K_Y+\Gamma)))\\
\intertext{and}
H^0(X_0, \ring X_0.(m(K_{X_0}+\Delta_0)))&\simeq H^0(Y_0, \ring Y_0.(m(K_{Y_0}+\Gamma_0)))
\end{align*}

Replacing $(X,\Delta)$ by $(Y,\Gamma)$ we may assume that if 
\[
\Theta_0=\Delta_0-\Delta_0\wedge N_{\sigma}(X_0,K_{X_0}+\Delta_0),
\]
then $\mathbf{B}_-(X_0,K_{X_0}+\Theta_0)$ contains no strata of $\Theta_0$.  There is a
unique divisor $0\leq \Theta\leq \Delta$ such that $\Theta|_{X_0}=\Theta_0$.
\eqref{t_one-smooth} implies that $K_X+\Delta$ is pseudo-effective and so
\eqref{l_commute} implies that
\[
\Theta=\Delta-\Delta\wedge N_{\sigma}(X,K_X+\Delta).
\]

As $(X_0,\Delta_0)$ has a good minimal model, \eqref{l_base} implies that $(X_0,\Theta_0)$
has a good minimal model.  Therefore \eqref{t_one-smooth} implies that $(X,\Theta)$ has a
good minimal model over $U$ and so \cite[2.9]{HX11} implies that any run of the
$(K_X+\Theta)$-MMP over $U$ with scaling of an ample divisor always terminates.
\eqref{l_generic-minimal} implies that we may run $f\colon\rmap X.Y.$ the
$(K_X+\Theta)$-MMP over $U$ until we get to a semi-ample model of the generic fibre;
\eqref{l_central} implies that $f$ is an isomorphism in a neighbourhood of the generic
point of every non kawamata log terminal centre of $(X,X_0+\Theta)$.  Since any MMP over
$U$ terminates, we may continue this MMP until we get to a good minimal over $U$, without
changing the fiber over $0$.

Let $V\subset X\times Y$ be the graph.  Then $\map V.X.$ is an isomorphism in a
neighbourhood of the generic point of each non kawamata log terminal centre of
$(X,X_0+\Theta)$.  We may find a log resolution $\map W.V.$ of the strict transform of
$\Theta$ and the exceptional divisor of $\map V.Y.$ which is an isomorphism in a
neighbourhood of the generic point of each non kawamata log terminal centre of
$(X,X_0+\Theta)$.  If $p\colon\map W.X.$ and $q\colon\map W.Y.$ are the induced morphisms
then we may write
\[
K_W+\Phi+W_0=p^*(K_X+X_0+\Theta)+E,
\]
where $W_0$ is the strict transform of $X_0$, $\Phi$ is the strict transform of
$\rfdown\Theta.$ and $\rfup E.\geq 0$ as $p$ is an isomorphism in a neighbourhood of the
generic point of each non kawamata log terminal centre of $(X,X_0+\Theta)$.

We may also write
\[
p^*((m-1)(K_X+\Theta))=q^*f_*((m-1)(K_X+\Theta))+F.
\]
Possibly shrinking $U$, we may assume $X_0$ is $\mathbb{Q}$-linearly equivalent to zero.
If we set 
\[
A=p^*(m(K_X+\Theta))+E-F, \qquad L=\rfup A. \qquad \text{and} \qquad C=\{-A\}
\]
then
\begin{align*} 
L-W_0 &=p^*(m(K_X+\Theta))+E-F+C-W_0 \\
      &=p^*(K_X+\Theta)+E+p^*((m-1)(K_X+\Theta))-F+C-W_0 \\
      &\sim_{\mathbb{Q}} K_W+\Phi+C+q^*f_*((m-1)(K_X+\Theta)).
\end{align*} 

$(W,\Phi+C)$ is log canonical, as $(W,\Phi+C)$ is log smooth and $\Phi+C$ is a boundary.
Since all non kawamata log terminal centres of $(W,\Phi+C)$ dominate $U$, a generalisation
of Koll\'ar's injectivity theorem (see \cite{Kollar86}, \cite[6.3]{Fujino09} and
\cite[5.4]{Ambro14}) implies that multiplication by a local parameter
\[
\map {H^1(W,\ring W.(L-W_0))}.{H^1(W,\ring W.(L))}.
\]
is an injective morphism and so the restriction morphism
\[
\map {H^0(W,\ring W.(L))}.{H^0(W_0,\ring W_0.(L|_{W_0}))}.
\]
is surjective.  Note that the support of $L-\rfdown q^*f_*(m(K_X+\Theta)).$ does not contain $W_0$ and
\begin{align*} 
L- \rfdown q^*f_*(m(K_X+\Theta)).&=\rfup A.-\rfdown q^*f_*(m(K_X+\Theta)).\\
                                 &\geq \rfup A-q^*f_*(m(K_X+\Theta)). \\
                                 &=\rfup E+\frac 1{m-1}F. \\
                                 &\geq 0.
\end{align*} 

We also have 
\begin{align*} 
|L|&\subset |mp^*(K_X+\Delta)+\rup E-F.| \\
   &\subset |mp^*(K_X+\Delta)+\rup E.| \\
   &=|m(K_X+\Delta)|.
\end{align*} 
Let $q_0\colon\map W_0.Y_0.$ be the restriction of $q$ to $W_0$.  We have
\begin{align*} 
|m(K_{X_0}+\Delta_0)|&=|m(K_{X_0}+\Theta_0)|  \\
                    &=|m(K_{Y_0}+f_{0*}\Theta_0)| \\
                    &=|q_0^*m(K_{Y_0}+f_{0*}\Theta_0)| \\
                    &\subset |L_{|_{W_0}}|\\
                    &=|L|_{W_0} \\
                    &\subset |m(K_X+\Delta)|_{X_0}. \qedhere
\end{align*} 
\end{proof}

\begin{proof}[Proof of \eqref{c_deformation}] Immediate from \eqref{l_deformation} and 
\eqref{t_one-smooth}.  
\end{proof}
\section{Boundedness of moduli}
\label{lc}

\begin{lemma}\label{l_point} Let $w$ be a positive real number and let $I\subset [0,1]$ 
be a set which satisfies the DCC.  Fix a log smooth pair $(Z,B)$, where $Z$ is a
projective variety.  Let $\mathfrak{F}$ be the set of all log smooth pairs $(X,\Delta)$
such that $\vol(X,K_X+\Delta)=w$, the coefficients of $\Delta$ belong to $I$ and there is
a sequence of smooth blow ups $f\colon\map X.Z.$ of the strata of $B$ such that
$f_*\Delta\leq B$.

Then there is a sequence of blow ups $\map Y.Z.$ of the strata of $B$ such that:

If $(X,\Delta)\in \mathfrak{F}$ then 
\[
\vol(Y,K_Y+\Gamma)=w
\]
where $\Gamma$ is the sum of the strict transform of $\Delta$ and the exceptional divisors
of the induced birational map $\rmap Y.X.$.
\end{lemma}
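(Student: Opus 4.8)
The plan is to reduce the statement, by a birational–invariance of volume computation, to the finiteness of a certain set of divisorial valuations over $Z$, and then to prove that finiteness by a diagonalisation argument built on the DCC for volumes, \eqref{t_dcc}.

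First I would record some preliminaries. Each $(X,\Delta)\in\mathfrak F$ is log canonical and $K_X+\Delta$ is big, since $(X,\Delta)$ is log smooth with coefficients in $(0,1]$ and $\vol(X,K_X+\Delta)=w>0$; write the given morphism as $f\colon\map X.Z.$. As $f$ is a composition of blow ups of strata of the simple normal crossing pair $(Z,B)$, the pair $(X,B_X)$ — where $B_X$ is the reduced sum of the strict transform of $B$ and of all $f$-exceptional divisors — is again log smooth and $K_X+B_X=f^*(K_Z+B)$. Writing $K_X+\Delta=f^*(K_Z+f_*\Delta)+E$ we therefore get $E=f^*(B-f_*\Delta)-(B_X-\Delta)$; I will call an $f$-exceptional prime divisor $F$ \emph{essential} for $(X,\Delta)$ if its coefficient in $E$ is negative (equivalently, extracting $F$ strictly drops the volume). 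Every $f$-exceptional divisor, in particular every essential one, is obtained from $Z$ by a blow up of strata of $B$.

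Next, for a blow up $\mu\colon\map Y.Z.$ of strata of $B$ and $(X,\Delta)\in\mathfrak F$, write $\Gamma=\Gamma_{Y,X}$ for the boundary on $Y$ in the statement and $B_Y$ for the reduced sum of the strict transform of $B$ and of all $\mu$-exceptional divisors. Identifying $H^0(Y,\ring Y.(m(K_Y+\Gamma)))$ and $H^0(X,\ring X.(m(K_X+\Delta)))$ with the subspaces of $H^0(Z,\ring Z.(m(K_Z+f_*\Delta)))$ cut out by the order–of–vanishing conditions along the divisors over $Z$ imposed by $B_Y-\Gamma$, respectively by $E$, and using the derivative bound for the volume exactly as in the proof of \eqref{l_vol} and in \cite[4.25]{LM09}, one checks that $\vol(Y,K_Y+\Gamma)\geq w$ always, and that $\vol(Y,K_Y+\Gamma)=w$ as soon as every essential divisor of $(X,\Delta)$ already appears on $Y$; in particular this holds whenever $Y$ dominates $X$, since then $\rmap Y.X.$ has no exceptional divisors and $K_Y+\Gamma$ differs from the pullback of $K_X+\Delta$ by an effective exceptional divisor. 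Consequently it suffices to prove that the set $\Sigma$ of divisorial valuations over $Z$ which arise as an essential divisor of some $(X,\Delta)\in\mathfrak F$ is finite: then we take $\map Y.Z.$ to be any sequence of blow ups of strata of $B$ extracting all valuations in $\Sigma$, and the previous sentence gives $\vol(Y,K_Y+\Gamma)=w$ for every $(X,\Delta)\in\mathfrak F$.

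The heart of the matter, and the step I expect to be hardest, is the finiteness of $\Sigma$, which I would prove by contradiction along the lines of \cite{HMX10}. If $\Sigma$ were infinite it would contain essential divisors of arbitrarily large depth, and — using that $(Z,B)$ has only finitely many strata, that only finitely many log smooth pairs occur at each finite depth of iterated blow ups of strata, and that $I$ satisfies the DCC, so that after passing to a subsequence the coefficients of the relevant boundaries along a fixed chain become non-decreasing — a diagonalisation argument produces an infinite chain of smooth blow ups of strata $Z=Z_0\leftarrow Z_1\leftarrow\cdots$, pairs $(X_k,\Delta_k)\in\mathfrak F$ dominating $Z_{m_k}$ with $m_k\to\infty$, and indices $i_k\to\infty$ such that the step $\map Z_{i_k}.Z_{i_k-1}.$ is essential for $(X_k,\Delta_k)$. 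Passing to the limiting boundary $\Delta_\infty$ along the chain, the volumes $\vol(Z_i,K_{Z_i}+\Delta_\infty|_{Z_i})$ form a non-increasing sequence bounded below by $w$, consisting of volumes of log canonical pairs of dimension $n$ with coefficients in a set satisfying the DCC, and dropping strictly at every step essential for $\Delta_\infty$; so if infinitely many of the steps $\map Z_{i_k}.Z_{i_k-1}.$ remained essential for $\Delta_\infty$ we would contradict \eqref{t_dcc}. The remaining difficulty — precisely the borderline phenomenon handled in the ACC arguments of \cite{HMX10} — is that a step essential for $\Delta_k$ need only be non-strictly so for $\Delta_\infty$; one rules this out by keeping track, along each chain, of the coefficients $c_j$ of the components through the centre and the coefficient $b$ of the new divisor (which satisfy $\sum_j(1-c_j)+b<1$ at an essential step), using that these centres are strata of the simple normal crossing pairs in question.
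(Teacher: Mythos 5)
There is a genuine gap: the statement you reduce the lemma to --- finiteness of the set $\Sigma$ of divisorial valuations over $Z$ that occur as essential divisors of some $(X,\Delta)\in\mathfrak F$ --- is false, so the proof cannot be closed along these lines. The culprit is the parenthetical ``equivalently, extracting $F$ strictly drops the volume'': a negative coefficient of $F$ in $E$ only means that a non-trivial vanishing condition along $F$ is imposed on sections of $m(K_Z+f_*\Delta)$, and that condition need not cut the volume, because it may be entirely absorbed by the negative part $N_\sigma$. Concretely, take $Z=\operatorname{Bl}_p\mathbb P^2$ with exceptional curve $e$, and $B=e+\tilde L_1+L_2+\dots +L_5$ where $L_1$ passes through $p$ and the other lines are general; then $K_Z+B=\pi^*(2H)+e$, so $\sigma_e(K_Z+B)=1$ and $w:=\vol(Z,K_Z+B)=4$. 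Let $F_1,F_2,\dots$ be the toroidal chain over the stratum $z=e\cap\tilde L_1$ obtained by repeatedly blowing up the intersection of the new exceptional curve with the strict transform of $e$, so that $1-a(F_k;Z,B)=1$ and $\operatorname{ord}_{F_k}(e)=k$. With $I=\{\tfrac 12,1\}$, the pair $(X_k,\Delta_k)$ whose boundary is the reduced total transform of $B$ except that $F_k$ carries coefficient $\tfrac 12$ satisfies $K_{X_k}+\Delta_k=f_k^*(K_Z+B)-\tfrac 12F_k$; since every effective $D'\sim_{\mathbb R}f_k^*(K_Z+B)$ is a pullback and hence has $\operatorname{ord}_{F_k}(D')\geq k$, the imposed vanishing is swallowed by $N_\sigma$ and $\vol(X_k,K_{X_k}+\Delta_k)=w$ for every $k$. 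Thus all $(X_k,\Delta_k)$ lie in the same $\mathfrak F$, each $F_k$ is essential in your sense, and $\Sigma$ is infinite. (Your volume computation in the second step is correct as far as it goes: if $Y$ extracts every essential divisor of $(X,\Delta)$ then $\vol(Y,K_Y+\Gamma)=w$. But the condition is sufficient and far from necessary --- in the example $Y=Z$ already works --- and it is unattainable.) For the same reason the closing diagonalisation cannot be repaired by bookkeeping of coefficients: the failure of an essential step to drop the volume is not a borderline coincidence among log discrepancies but reflects the asymptotic base locus of $K_Z+\Phi$, which no DCC argument on coefficients of snc boundaries detects.

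The paper's proof sidesteps valuations entirely. For each blow up $Y\to Z$ of strata of $B$ it considers the supremum $v(Y,C)$ of $\vol(Y,K_Y+\Gamma)$ over $\mathfrak F$; these suprema lie in the closure of the DCC set of \eqref{t_dcc} and can only decrease on passing to higher models, so a minimising model exists. That the minimum equals $w$ is then proved by a diagonal argument on a cofinal tower of models: one passes to the (monotone, by DCC) limit $\Phi$ of the pushed-forward boundaries, scales by $(1-\epsilon)$ to make $(Z,(1-\epsilon)\Phi)$ kawamata log terminal using \cite[(1.5)]{HMX12}, terminalises via \eqref{l_klt}, and compares with the approximating pairs through \cite[(5.3.2)]{HMX12} to bound the limiting volume above by $w$; since each individual $\vol(Y,K_Y+\Gamma)\geq w$, equality follows for every pair. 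If you wish to rescue your outline, ``essential'' would have to mean something like $c_F<1-a(F;Z,\Phi)-\sigma_F(f^*(K_Z+\Phi))$, and proving finiteness of that set is not obviously any easier than the lemma itself.
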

\begin{proof} Let $n=\dim Z$.  We may suppose that $1\in I$.  Let $\mathfrak{G}$ be the
set of log smooth pairs $(Y,\Gamma)$ such that $Y$ is projective of dimension $n$ and the
coefficients of $\Gamma$ belong to $I$.

As \cite[(1.3.1)]{HMX12} implies that
\[
V=\{\, \vol(Y,K_Y+\Gamma) \,|\, (Y,\Gamma)\in \mathfrak{G} \,\}
\]
satisfies the DCC we may find $\delta>0$ such that if 
\[
\vol(Y,K_Y+\Gamma)<w+\delta \qquad \text{then} \qquad \vol(Y,K_Y+\Gamma)\leq w.
\]

As the set
\[ 
\{\, \frac{r-1}r i \,|\, r\in\mathbb{N}, i\in I \,\} 
\] 
satisfies the DCC, by \cite[(1.5)]{HMX12} we may find $r\in \mathbb{N}$ such that
$K_Y+\frac{r-1}r\Gamma$ is big whenever $(Y,\Gamma)\in \mathfrak{G}$ and $K_Y+\Gamma$ 
is big.  

Pick $\epsilon>0$ such that 
\[
(1-\epsilon)^n>\frac w{w+\delta}
\] 
and set
\[
a=1-\frac{\epsilon}r.
\]
If $(Y,\Gamma)\in \mathfrak{G}$ then
\[ 
K_Y+a\Gamma=(1-\epsilon)(K_Y+\Gamma)+\epsilon\left (K_Y+\frac{r-1}r\Gamma\right ), 
\] 
so that  
\[ 
\vol(Y,K_Y+a\Gamma)\geq \vol(Y,(1-\epsilon)(K_Y+\Gamma))=(1-\epsilon)^n\vol(Y,K_Y+\Gamma). 
\] 

As $(Z,aB)$ is kawamata log terminal \eqref{l_klt} implies we may pick a birational
morphism $g\colon\map Y.Z.$ such that if we write
\[
K_Y+\Psi_0=g^*(K_Z+aB)+E_0
\]
where $\Psi_0\geq 0$ and $E_0\geq 0$ have no common components, $g_*\Psi_0=aB$ and $g_*E_0=0$, then no
two components of $\Psi_0$ intersect.  In particular $(Y,\Psi_0)$ is terminal.

Pick $(X,\Delta)\in \mathfrak{F}$ and let $\Gamma$ be the strict transform of $\Delta$
plus the exceptional divisors of the induced birational map $\rmap Y.X.$.  Let
$\Phi=g_*(a\Gamma)$.  As $\Phi\leq aB$, if we write
\[
K_Y+\Psi=g^*(K_Z+\Phi)+E
\]
where $\Psi\geq 0$ and $E\geq 0$ have no common components, $g_*\Psi=\Phi$ and
$g_*E=0$, then $\Psi\leq \Psi_0$.  In particular $(Y,\Psi)$ is terminal.

Let $\Xi=\Psi\wedge a\Gamma$ and let $\Sigma\leq \Delta$ be the strict transform of $\Xi$
on $X$.  We have
\begin{align*} 
\vol(Y,K_Y+a\Gamma) &=\vol(Y,K_Y+\Xi)\\
                    &=\vol(X,K_X+\Sigma) \\
                    &\leq \vol(X,K_X+\Delta)=w,
\end{align*}
where we used \cite[(5.3.2)]{HMX10} for the first line and we used the fact that $(Y,\Xi)$
is terminal, as $(Y,\Psi)$ is terminal, to get from the first line to the second line.

It follows that 
\[
w\leq \vol(Y,K_Y+\Gamma)\leq \frac 1{(1-\epsilon)^n}\vol(Y,K_Y+a\Gamma)<w+\delta,
\]
by our choice of $\epsilon$, so that 
\[
\vol(Y,K_Y+\Gamma)=w, 
\]
by our choice of $\delta$.  
\end{proof}

\begin{lemma}\label{l_dcc} Let $n$ be a positive integer, let $w$ be a positive real number 
and let $I\subset [0,1]$ be a set which satisfies the DCC.  Let $\mathfrak{F}$ be a set of
log canonical pairs $(X,\Delta)$ such that $X$ is projective of dimension $n$, the
coefficients of $\Delta$ belong to $I$ and $\vol(X,K_X+\Delta)=w$.

Then there is a projective morphism $\map Z.U.$ and a log smooth pair $(Z,B)$ over $U$
such that if $(X,\Delta)\in\mathfrak{F}$ then there is a point $u\in U$ and a birational
map $f_u\colon\rmap X.Z_u.$ such that
\[
\vol(Z_u,K_{Z_u}+\Phi)=w
\]
where $\Phi\leq B_u$ is the sum of the strict transform of $\Delta$ and the exceptional
divisors of $f_u^{-1}$.  
\end{lemma}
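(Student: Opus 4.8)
The plan is to combine log birational boundedness with deformation invariance of log plurigenera, reducing the statement to the absolute case \eqref{l_point}.

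First, since the coefficients of $\Delta$ lie in the DCC set $I$ and $\vol(X,K_X+\Delta)=w$ is fixed, \cite[1.3]{HMX12} implies that a fixed multiple $m_0(K_X+\Delta)$ defines a birational map to projective space whose image has bounded degree, so $\mathfrak{F}$ is log birationally bounded: there is a log pair $(Z,B)$ and a projective morphism $\map Z.U.$ such that every $(X,\Delta)\in\mathfrak{F}$ is birational to some fibre $Z_u$, with the strict transform of $\Delta$ together with the reduced exceptional divisor of $\rmap Z_u.X.$ supported on $B_u$. By Noetherian induction on $U$, stratifying $U$ and taking a log resolution of $(Z,B)$ over each stratum, we may assume $U$ is smooth and $(Z,B)$ is log smooth over $U$; decomposing $U$ we may assume it is irreducible, and enlarging $I$ we may assume $1\in I$ and $I$ is closed, since a family that works for a larger class works for $\mathfrak{F}$. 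For $(X,\Delta)\in\mathfrak{F}$ let $\Phi\le B_u$ be the strict transform of $\Delta$, with its own coefficients, plus the reduced exceptional divisor of $\rmap Z_u.X.$. Comparing $K_{Z_u}+\Phi$ with $K_X+\Delta$ on a common resolution and using that $X$ is log canonical (so no discrepancy of $(X,\Delta)$ is less than $-1$), we get $\vol(Z_u,K_{Z_u}+\Phi)\ge\vol(X,K_X+\Delta)=w$; the content of the lemma is to force equality after replacing $Z$ by a higher model.

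Next I would carry out the minimisation that underlies \eqref{l_point}, but relative to $U$. For a composite $\map Y.Z.$ of blow ups of strata of $B$, which again produces a log smooth family $(Y,C)$ over $U$ since $(Z,B)$ is log smooth over $U$, and for $(X,\Delta)\in\mathfrak{F}$, let $\Gamma\le C_u$ be the strict transform of $\Delta$ plus the reduced exceptional divisor of the induced map $\rmap Y_u.X.$, and set
\[
v(Y,C)=\sup\{\,\vol(Y_u,K_{Y_u}+\Gamma)\,|\,(X,\Delta)\in\mathfrak{F}\,\}.
\]
By deformation invariance of log plurigenera (cf. \eqref{t_inv}, \eqref{l_one} and the results of \cite{HMX10}), $\vol(Y_u,K_{Y_u}+\Psi_u)$ depends only on the vector of coefficients of $\Psi$ along the components of $C$, not on $u$; since these vectors lie in the set with entries in $I\cup\{0,1\}$, which satisfies the DCC, \eqref{t_dcc} shows each such volume lies in a fixed DCC set, while $v(Y,C)$ is non-increasing under further blow ups of strata. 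Passing, exactly as in the proof of \eqref{l_point}, to a common dominating model of a cofinal sequence of higher models, we may assume $v=v(Z,B)$ is minimal, and it suffices to prove $v=w$; clearly $v\ge w$.

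Finally, suppose $v>w$. By deformation invariance the whole question is governed by a general closed fibre $(Z_0,B_0)$: each $\vol(Z_u,K_{Z_u}+\Phi)$ is computed on $(Z_0,B_0)$ in terms of coefficient vectors alone. Replacing each member of a sequence $(X_i,\Delta_i)\in\mathfrak{F}$ with $\vol(Z_{u_i},K_{Z_{u_i}}+\Phi_i)\to v$ by a log resolution and transporting the data to $(Z_0,B_0)$, I would feed this into \eqref{l_point} for the fixed log smooth pair $(Z_0,B_0)$: that lemma produces a composite of blow ups of strata of $B_0$ on which all the transformed volumes equal $w$. Spreading this sequence out to a composite $\map Y.Z.$ of blow ups of strata of $B$ over $U$, and invoking deformation invariance once more, we get $v(Y,C)\le w<v$, contradicting minimality; hence $v=w$ and $(Z,B)\to U$ is the required family. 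The main obstacle is precisely this last reduction: \eqref{l_point} is stated for genuine morphisms from log smooth pairs of volume exactly $w$, whereas the members of $\mathfrak{F}$ are merely log canonical and related to $Z$ by birational maps, so one must choose the replacing log resolutions so as to preserve both the volume and the relevant birational boundedness data, keep track of how the reduced exceptional divisor — and hence the volume — transforms under these resolutions, and verify that the blow-up sequence produced over $Z_0$ is a sequence of blow ups of strata of $B$ that makes sense uniformly over $U$, so that the minimality of $v(Z,B)$ is genuinely contradicted. Organising the coefficient bookkeeping so that deformation invariance of log plurigenera applies at each stage is the delicate point.
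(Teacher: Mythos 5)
Your proposal follows the paper's proof in all essentials: log birational boundedness via \cite[1.3]{HMX12}, transport of every member of $\mathfrak{F}$ to a single fixed fibre $(Z_0,B_0)$ by deformation invariance of log plurigenera \eqref{t_inv}, application of \eqref{l_point} on that fibre, and spreading the resulting sequence of blow ups of strata back out over $U$. The only real difference is your intermediate relative minimisation of $v(Y,C)$, which is harmless but superfluous: once \eqref{l_point} is applied on the central fibre and the model is spread out, deformation invariance already forces all fibre volumes to equal $w$, so no contradiction argument is needed.
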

\begin{proof} We may assume that $1\in I$.  We may also assume that $\mathfrak{F}$
consists of all log canonical pairs $(X,\Delta)$ such that $X$ is projective of dimension
$n$, the coefficients of $\Delta$ belong to $I$ and $\vol(X,K_X+\Delta)=w$.

By \cite[1.3]{HMX12} there is a constant $r$ such that if $(X,\Delta)\in \mathfrak{F}$
then $\phi_{r(K_X+\Delta)}$ is birational.  (2.3.4) and (3.1) of \cite{HMX10} imply that
the set $\mathfrak{F}$ is log birationally bounded.

Therefore we may find a projective morphism $\pi\colon\map Z.U.$ and a log pair $(Z,B)$
such that if $(X,\Delta)\in \mathfrak{F}$ then there is a point $u\in U$ and a birational
map $f\colon\rmap X.Z_u.$ such that the support of the strict transform of $\Delta$ plus
the $f^{-1}$-exceptional divisors is contained in the support of $B_u$.  By standard
arguments, see for example the proof of \cite[1.9]{HMX10}, we may assume that $(Z,B)$ is
log smooth over $U$ and the intersection of strata of $B$ with the fibres is irreducible.

Let $0$ be a closed point of $U$.  Let $\mathfrak{F}_0\subset \mathfrak{F}$ be the set of
log smooth pairs $(X_0,\Delta_0)$ such that there is a sequence of smooth blow ups
$f\colon\map X_0.Z_0.$ of the strata of $B_0$ with $f_*\Delta_0\leq B_0$.  By
\eqref{l_point} there is a sequence of blow ups $g\colon\map Y_0.Z_0.$ of the strata of
$B_0$ such that if $(X_0,\Delta_0)\in\mathfrak{F}_0$ and $\Gamma_0$ is the strict transform of
$\Delta_0$ plus the exceptional divisors then
\[
\vol(Y_0,K_{Y_0}+\Gamma_0)=w.
\]
Let $g\colon\map Y.Z.$ be the sequence of blow ups of the strata of $B$ induced by $g_0$.
Replacing $(Z,B)$ by $(Y,C)$, where $C$ is the sum of the strict transform of $B$ and the
exceptional divisors of $g$, we may assume that if $(X,\Delta)\in\mathfrak{F}_0$ then
\[
\vol(Z_0,K_{Z_0}+\Psi_0)=w,
\]
where $\Psi_0=f_*\Delta\leq B_0$.  Note that by replacing $Z$ by a higher model,
$\mathfrak{F}_0$ becomes smaller.

Suppose that $(X,\Delta)\in \mathfrak{F}$.  By a standard argument, see the proof of
\cite[(1.9)]{HMX10}, we may assume that $(X,\Delta)$ is log smooth and
$f\colon\map X.Z_u.$ blows up the strata of $B_u$.  Let $h\colon\map W.Z.$ blow up the
corresponding strata of $B$ so that $W_u=X$ and $h_u=f$.  Let $\Theta$ be the divisor on
$W$ such that $\Theta_u=\Delta$ and let $f_0\colon\map W_0.Z_0.$ be the induced birational
morphism.  Then
\[
\vol(W_0,K_{W_0}+\Theta_0)=\vol(X,K_X+\Delta)=w,
\]
by deformation invariance of the volume, \eqref{c_inv}, so that $(W_0,\Theta_0)\in
\mathfrak{F}_0$.  

But then 
\[
\vol(Z_0,K_{Z_0}+\Phi_0)=w,
\]
where $\Phi_0=f_{0*}\Theta_0$.  Let $\Phi=h_*\Theta$.  Then $\Phi_u$ is the strict
transform of $\Delta$ plus the exceptional divisors and
\[
\vol(Z_u,K_{Z_u}+\Phi_u)=w,
\]
by deformation invariance of the volume, \eqref{c_inv}.
\end{proof}

\begin{proposition}\label{p_modulifinite} Fix an integer $n$, a constant $d$ and a 
set $I\subset [0,1]$ which satisfies the DCC.

Then the set $\mathfrak{F}_{\text{lc}}(n,d,I)$ of all $(X,\Delta)$ such that
\begin{enumerate} 
\item $X$ is a union of projective varieties of dimension $n$,
\item $(X,\Delta)$ is log canonical, 
\item the coefficients of $\Delta$ belong to $I$, 
\item $K_X+\Delta$ is an ample $\mathbb{Q}$-divisor, and 
\item $(K_X+\Delta)^n=d$, 
\end{enumerate} 
is bounded.  

In particular there is a finite set $I_0$ such that
$\mathfrak{F}_{\text{lc}}(n,d,I)=\mathfrak{F}_{\text{lc}}(n,d,I_0)$.
\end{proposition}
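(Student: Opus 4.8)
The plan is to reduce to the case where $X$ is irreducible with a fixed volume, establish log birational boundedness, and then use the constructibility of the locus of fibres admitting a log canonical model to carve out a bounded family of log canonical models; the technical content will all have been supplied by the previous sections, so the proof is essentially one of assembly. \textbf{First}, writing $X=\bigsqcup_j X_j$ for the decomposition into irreducible components and $\Delta_j=\Delta|_{X_j}$, each $(X_j,\Delta_j)$ is a log canonical pair of dimension $n$ with coefficients in $I$, with $K_{X_j}+\Delta_j$ ample and $\sum_j(K_{X_j}+\Delta_j)^n=d$. By \eqref{t_dcc} the volumes $d_j=\vol(X_j,K_{X_j}+\Delta_j)$ lie in a set $D=D(n,I)$ satisfying the DCC, so by \eqref{l_sum} there are only finitely many possibilities for the multiset $\{d_j\}$, and in particular the number of components is bounded. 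Since a disjoint union of boundedly many members of bounded families is bounded, it suffices to fix $w\in D$ and prove that the set $\mathfrak F_w$ of irreducible $(X,\Delta)\in\mathfrak F_{\text{lc}}(n,d,I)$ with $\vol(X,K_X+\Delta)=w$ is bounded.

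\textbf{Next} I would establish log birational boundedness and matching of volumes. By \cite[1.3]{HMX12} there is an $r=r(n,w,I)$ with $\phi_{r(K_X+\Delta)}$ birational for every $(X,\Delta)\in\mathfrak F_w$, and since $(K_X+\Delta)^n=w$ is fixed, $(2.3.4)$ and $(3.1)$ of \cite{HMX10} show that $\mathfrak F_w$ is log birationally bounded. Applying \eqref{l_dcc} gives a projective morphism $Z\to U$ with $U$ of finite type and a log smooth pair $(Z,B)$ over $U$ such that the strata of $B$ have irreducible fibres over $U$ and every $(X,\Delta)\in\mathfrak F_w$ is birational to some fibre $Z_u$ with $\vol(Z_u,K_{Z_u}+\Phi)=w$, where $0\le\Phi\le B_u$ is the sum of the strict transform of $\Delta$ and the exceptional divisors (so, taking $1\in I$, the coefficients of $\Phi$ lie in $I$). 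Since $K_X+\Delta$ is ample, hence big, and the volumes agree, \eqref{l_vol} identifies $(X,\Delta)$ with the log canonical model of $(Z_u,\Phi)$; conversely, whenever $(Z_u,\Phi')$ with $0\le\Phi'\le B_u$, coefficients in $I$, and $\vol(Z_u,K_{Z_u}+\Phi')=w$ admits a log canonical model, that model lies in $\mathfrak F_w$.

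\textbf{Finally} I would pass from a constructible parameter set to a finite-type family. There are finitely many choices for the support of $\Phi$ among the components of $B$; fixing one, $\Phi$ is given by a coefficient vector in $I^N$, and a diagonalisation argument using the DCC property of $I$ together with the fixed value $w$ (in the spirit of the proof of \eqref{l_sum}) shows that only finitely many such vectors occur, so that $\Phi$ may be taken to be a fixed divisor on $Z$ with $(Z,B)$ log smooth over $U$. Then every fibre $(Z_u,\Phi)$ is divisorially log terminal, so by \eqref{l_equivalent} it has a log canonical model exactly when it has a good minimal model, and by \eqref{c_dense} the locus $U_0\subset U$ of such $u$ is constructible. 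Stratifying $U_0$ into smooth locally closed pieces, on each piece $V$ the pair $(Z|_V,\Phi|_V)$ is log smooth over $V$ with some fibre having a good minimal model, so \eqref{t_one-smooth} provides a good minimal model over $V$ and \eqref{c_deformation} shows that the relative ample model $\mathcal Z_V\to V$ --- which is the relative log canonical model, since $K_{Z_u}+\Phi$ is big --- has as its fibre over each $u$ the log canonical model of $(Z_u,\Phi)$, namely our $X$. The finitely many families $\mathcal Z_V\to V$, carrying the pushforwards of $\Phi$, then contain every member of $\mathfrak F_w$, so $\mathfrak F_w$ is bounded; running over the finitely many $w$ and the finitely many multisets $\{d_j\}$ completes the proof.

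\textbf{The main obstacle} has in effect already been removed in the earlier sections: the only non-formal ingredient is the constructibility of $U_0$, that is \eqref{c_dense}, which itself rests on the abundance-in-families theorem \eqref{t_one-smooth}. Granting that, the argument above is organisational, and the steps requiring genuine care are the reduction to finitely many coefficient vectors for $\Phi$ and the extraction of the finite-type family $\mathcal Z_V\to V$ from the constructible set $U_0$, both of which proceed via \eqref{c_deformation}.
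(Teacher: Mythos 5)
Your proposal is correct and follows essentially the same route as the paper: reduce to irreducible $X$ via \eqref{t_dcc} and \eqref{l_sum}, apply \eqref{l_dcc} and \eqref{l_vol} to realise each $(X,\Delta)$ as the log canonical model of a fibre $(Z_u,\Phi)$ of a fixed log smooth family, and then use \eqref{c_dense} together with the relative log canonical model over a stratification of $U$ to obtain the bounded family. The only place you go beyond the paper's (rather compressed) write-up is in flagging the finiteness of the coefficient vectors of $\Phi$; note that DCC plus the fixed volume $w$ alone does not force this, and one must also use \eqref{l_vol} to see that two comparable choices of $\Phi$ can differ only along divisors contracted by the common log canonical model, which is impossible since the exceptional components all carry coefficient one and the remaining components survive on $X$ where $K_X+\Delta$ is ample.
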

\begin{proof} If 
\[
X=\coprod_{i=1}^k X_i,
\]
and $(X_i,\Delta_i)$ is the corresponding log canonical pair then $K_{X_i}+\Delta_i$ is
ample and if $d_i=(K_{X_i}+\Delta_i)^n$ then $d=\sum d_i$.  \eqref{l_sum} and
\eqref{t_dcc} imply that there are only finitely many tuples $(\llist d.k.)$.   

Thus it is enough to show that the set $\mathfrak{F}$ of irreducible pairs $(X,\Delta)$
satisfying (1--5) is bounded.

By \eqref{l_dcc} there is a projective morphism $\map Z.U.$ and a log smooth pair $(Z,B)$
over $U$, such that if $(X,\Delta)\in\mathfrak{F}$ then there is a closed point $u\in U$
and a birational map $f_u\colon\rmap Z_u.X.$ such that 
\[
\vol(Z_u,K_{Z_u}+\Phi)=d,
\]
where $\Phi\leq B_u$ is the sum of the strict transform of $\Delta$ and the
$f_u$-exceptional divisors.  \eqref{l_vol} implies that $f_u$ is the log canonical model of
$(Z_u,\Phi)$.

On the other hand, \eqref{c_dense} implies that if we replace $U$ by a finite disjoint
union of locally closed subsets then we may assume that every fibre of $\pi$ has a log
canonical model.  Replacing $(Z,B)$ by the log canonical model over $U$, the fibres of
$\pi$ are the elements of $\mathfrak{F}$.
\end{proof}

\begin{lemma}\label{l_bounded} Let $\mathfrak{F}$ be a family of log canonical pairs
$(X,\Delta)$ where $X$ is projective, the coefficients of $\Delta$ belong to a finite set
$I$ and $K_X+\Delta$ is ample.

Let 
\[
\mathfrak{T}=\{\, (X,\Delta,\tau) \,|\, (X,\Delta)\in \mathfrak{F},\tau\colon\map S.S.\,\}
\]
where $S$ is the normalisation of a divisor supported on $\rfdown\Delta.$ and $\tau$ is an
involution which fixes the different of $(K_X+\Delta)|_S$.

If $\mathfrak{F}$ is a bounded family then so is $\mathfrak{T}$. 
\end{lemma}
\begin{proof} By assumption there is a projective morphism $\pi\colon\map Z.U.$ and a log
pair $(Z,\Sigma)$ such that if $(X,\Delta)\in \mathfrak{F}$ then there is point $u\in U$
and an isomorphism $(Z_u,\Theta)$ with $(X,\Delta)$, where $\Theta$ is a divisor supported
on $\Sigma_u$.  As $I$ is finite, possibly replacing $U$ by a disjoint union of locally
closed subsets, we may assume that $\Theta=\Sigma_u$.

Let $U_1$ be the set $u$ of points of $U$ such that $(Z_u,\Sigma_u)$ is isomorphic to some
element $(X,\Delta)$ of $\mathfrak{F}$.  Replacing $U$ by the closure of $U_1$ we may
assume that $U_1$ is dense in $U$.  In particular we may assume that $K_Z+\Sigma$ is ample
over $U$.  As the set of points where $(Z,\Sigma)$ is log canonical is constructible,
replacing $U$ by a disjoint union of finitely many locally closed subsets, we may assume
that $(Z,\Sigma)$ is log canonical; we may also assume that $\Sigma$ meets each fibre
$Z_u$ in a divisor and that $(Z_u,\Sigma_u)$ is log canonical.

Possibly replacing $U$ by finitely many disjoint copies, we may assume that there is a
divisor $C'$ on $Z$ such that if $(X,\Delta,\tau)\in \mathfrak{T}$ then $S$ corresponds to
$C'_u$.  Possibly replacing $U$ by a disjoint union of locally closed subsets we may
assume that if $C$ is the normalisation of $C'$ then $S$ is isomorphic to $C_u$.  Possibly
replacing $U$ by a disjoint union of locally closed subsets for the last time, we may
assume that if we write
\[
(K_Z+\Sigma)|_C=K_C+\Phi \qquad \text{and} \qquad (K_X+\Delta)|_S=K_S+\Theta,
\]
then $\Theta$ corresponds to $\Phi_u$.

Recall that the scheme $\Isom_U(C,C)$, which represents the functor which assigns to a
scheme $T$ over $U$ the set of all isomorphisms $\map C_T.C_T.$ over $T$, is a countable
union of quasi-projective schemes over $U$.  Pick $m$ such that $-m(K_Z+\Sigma)$ is
Cartier.  As $-m(K_Z+\Sigma)$ is ample over $U$, the subscheme of $\Isom_U(C,C)$ fixing
the line bundle $\ring C.(-m(K_Z+\Sigma))$ is a closed subscheme which is quasi-projective
over $U$.  The set of involutions fixing the different is then a closed subscheme.

It follows that $\mathfrak{T}$ is a bounded family.
\end{proof}

\begin{proof}[Proof of \eqref{t_modulifinite}] Let $\mathfrak{T}$ be the set of triples
$(X,\Delta,\tau)$ where $(X,\Delta)\in \mathfrak{F}_{\text{lc}}(n,d,I)$ and
$\tau\colon\map S.S.$ is an involution of the normalisation of a divisor supported on
$\rfdown\Delta.$, which fixes the different of $(K_X+\Delta)|_S$.

By \cite[5.13]{Kollar13}, it is enough to prove that $\mathfrak{T}$ is bounded.
\eqref{p_modulifinite} implies that $\mathfrak{F}_{\text{lc}}(n,d,I)$ is bounded and so we
may apply \eqref{l_bounded}.  \end{proof}

\bibliographystyle{hamsplain}
\bibliography{/home/mckernan/Jewel/Tex/math}

\providecommand{\bysame}{\leavevmode\hbox to3em{\hrulefill}\thinspace}
\begin{thebibliography}{10}

\bibitem{Alexeev94}
V.~Alexeev, \emph{Boundedness and {$K^2$} for log surfaces}, International J.
  Math. \textbf{5} (1994), 779--810.

\bibitem{Alexeev96}
\bysame, \emph{Moduli spaces {$M_{g,n}(W)$} for surfaces}, Higher-dimensional
  complex varieties (Trento, 1994), de Gruyter, Berlin, 1996, pp.~1--22.

\bibitem{Ambro05}
F.~Ambro, \emph{The moduli {$b$}-divisor of an lc-trivial fibration}, Compos.
  Math. \textbf{141} (2005), no.~2, 385--403.

\bibitem{Ambro14}
\bysame, \emph{An injectivity theorem}, Compos. Math. \textbf{150} (2014),
  no.~6, 999--1023.

\bibitem{BP10}
B.~Berndtsson and M.~P{\u{a}}un, \emph{Quantitative extensions of
  pluricanonical forms and closed positive currents}, Nagoya Math. J.
  \textbf{205} (2012), 25--65.

\bibitem{Birkar12b}
C.~Birkar, \emph{Existence of log canonical flips and a special {LMMP}}, Publ.
  Math. Inst. Hautes \'Etudes Sci. \textbf{115} (2012), 325--368.

\bibitem{BCHM10}
C.~Birkar, P.~Cascini, C.~D. Hacon, and J.~M\textsuperscript{c}Kernan,
  \emph{Existence of minimal models for varieties of log general type}, J.
  Amer. Math. Soc. \textbf{23} (2010), no.~2, 405--468.

\bibitem{Fujino05}
O.~Fujino, \emph{Special termination and reduction to pl flips}, Flips for
  3-folds and 4-folds (Alessio Corti, ed.), Oxford University Press, 2005,
  pp.~55--65.

\bibitem{Fujino09}
\bysame, \emph{Fundamental theorems for the log minimal model program}, Publ.
  Res. Inst. Math. Sci. \textbf{47} (2011), no.~3, 727--789.

\bibitem{Fujino12}
\bysame, \emph{Basepoint-free theorems: saturation, b-divisors, and canonical
  bundle formula}, Algebra Number Theory \textbf{6} (2012), no.~4, 797--823.

\bibitem{FG11}
O.~Fujino and Y.~Gongyo, \emph{Log pluricanonical representations and the
  abundance conjecture}, Compos. Math. \textbf{150} (2014), no.~4, 593--620.

\bibitem{Fukuda02}
S.~Fukuda, \emph{On numerically effective log canonical divisors}, Int. J.
  Math. Math. Sci. \textbf{30} (2002), no.~9, 521--531.

\bibitem{HM07a}
C.~Hacon and J.~M\textsuperscript{c}Kernan, \emph{Existence of minimal models
  for varieties of log general type {II}}, J. Amer. Math. Soc. \textbf{23}
  (2010), no.~2, 469--490, \mbox{arXiv:0808.1929}.

\bibitem{HMX10}
C.~Hacon, J.~M\textsuperscript{c}Kernan, and C.~Xu, \emph{On the birational
  automorphisms of varieties of general type}, Ann. of Math. \textbf{177}
  (2013), no.~3, 1077--1111.

\bibitem{HMX12}
\bysame, \emph{{ACC} for log canonical thresholds}, Ann. of Math. \textbf{180}
  (2014), no.~2, 523--571.

\bibitem{HX11a}
C.~Hacon and C.~Xu, \emph{On finiteness of {{\textbf{B}}}-representation and
  semi-log canonical abundance}, To appear: {Minimal} models and extremal rays,
  \mbox{arXiv:1107.4149}.

\bibitem{HX11}
\bysame, \emph{Existence of log canonical closures}, Invent. Math. \textbf{192}
  (2013), no.~1, 161--195.

\bibitem{Kawamata85}
Y.~Kawamata, \emph{Pluricanonical systems on minimal algebraic varieties},
  Invent. math. \textbf{79} (1985), 567--588.

\bibitem{Kawamata91}
\bysame, \emph{On the length of an extremal rational curve}, Invent. Math.
  \textbf{105} (1991), 609--611.

\bibitem{Kollar10}
J.~Koll{\'a}r, \emph{Moduli of higher dimensional varieties}, Book to appear.

\bibitem{Kollar86}
\bysame, \emph{Higher direct images of dualizing sheaves {{\rm I}}}, Ann. Math.
  \textbf{123} (1986), 11--42.

\bibitem{Kollar92b}
\bysame, \emph{Log surfaces of general type; some conjectures}, Contemp. Math.
  \textbf{162} (1994), 261--275.

\bibitem{Kollar96}
\bysame, \emph{{Rational Curves on Algebraic Varieties}}, Ergebnisse der
  Mathematik und ihrer Grenzgebiete, vol.~32, Springer, 1996.

\bibitem{Kollar11}
\bysame, \emph{Sources of log canonical centers}, 2011,
  \mbox{arXiv:1107.2863v2}.

\bibitem{Kollar10a}
\bysame, \emph{Moduli of varieties of general type}, Handbook of Moduli: Volume
  {II}, Adv. Lect. Math. (ALM), vol.~24, Int. Press, Somerville, MA, 2013,
  \mbox{arXiv:1008.0621v1}, pp.~115--130.

\bibitem{Kollar13}
\bysame, \emph{Singularities of the minimal model program}, Cambridge Tracts in
  Mathematics, vol. 200, Cambridge University Press, Cambridge, 2013, With a
  collaboration of S{\'a}ndor Kov{\'a}cs.

\bibitem{KM98}
J.~Koll{\'a}r and S.~Mori, \emph{Birational geometry of algebraic varieties},
  Cambridge tracts in mathematics, vol. 134, Cambridge University Press, 1998.

\bibitem{KSB88}
J.~Koll{\'a}r and N.~Shepherd-Barron, \emph{Threefolds and deformations of
  surface singularities}, Invent. math. \textbf{91} (1988), 299--338.

\bibitem{LM09}
R.~Lazarsfeld and M.~Musta{\c{t}}{\u{a}}, \emph{Convex bodies associated to
  linear series}, Ann. Sci. \'Ec. Norm. Sup\'er. (4) \textbf{42} (2009), no.~5,
  783--835.

\bibitem{Lesieutre15}
J.~Lesieutre, \emph{A pathology of asymptotic multiplicity in the relative
  setting}, \mbox{arXiv:1502.03019}.

\bibitem{Nakayama04}
N.~Nakayama, \emph{Zariski-decomposition and abundance}, MSJ Memoirs, vol.~14,
  Mathematical Society of Japan, Tokyo, 2004.

\bibitem{Siu02}
Y-T. Siu, \emph{Extension of twisted pluricanonical sections with
  plurisubharmonic weight and invariance of semipositively twisted plurigenera
  for manifolds not necessarily of general type}, Complex geometry
  (G\"ottingen, 2000), Springer, Berlin, 2002, pp.~223--277.

\bibitem{WX14}
X.~Wang and C.~Xu, \emph{Nonexistence of asymptotic {GIT} compactification},
  Duke Math J. \textbf{163} (2014), no.~12, 2217--2241.

\end{thebibliography}

\end{document}